\documentclass[a4paper, 12pt, reqno]{amsart}

\usepackage[T1]{fontenc}

\usepackage{lmodern}
\usepackage[hidelinks]{hyperref}
\usepackage{graphicx}
\usepackage{amsfonts}
\usepackage{amsmath, amssymb, amsthm}
\usepackage{mathrsfs}
\usepackage{mathtools}
\usepackage{longtable}
\usepackage{enumerate}
\usepackage[table]{xcolor}
\usepackage{anyfontsize}
\usepackage{tikz}
\usetikzlibrary{arrows.meta, positioning, calc}

\usepackage{upgreek}

\usepackage{url}

\usepackage{geometry}
\newcommand{\rA}{\ensuremath{\mathrm{A}}}
\newcommand{\rB}{\ensuremath{\mathrm{B}}}

\newcommand{\rd}{\ensuremath{\mathrm{d}}}

\newcommand{\ee}{\ensuremath{\mathbf{e}}}

\newcommand{\bbZ}{\ensuremath{\mathbb{Z}}}
\newcommand{\bbQ}{\ensuremath{\mathbb{Q}}}

\newcommand{\bbC}{\ensuremath{\mathbb{C}}}

\newcommand{\bbP}{\ensuremath{\mathbb{P}}}
\newcommand{\bbT}{\ensuremath{\mathbb{T}}}

\newcommand{\sfu}{\ensuremath{\mathsf{u}}}
\newcommand{\sfv}{\ensuremath{\mathsf{v}}}

\newcommand{\fu}{\ensuremath{\mathfrak{u}}}
\newcommand{\fz}{\ensuremath{\mathfrak{z}}}

\newcommand{\fB}{\ensuremath{\mathfrak{B}}}

\newcommand{\calC}{\ensuremath{\mathcal{C}}}

\newcommand{\calO}{\ensuremath{\mathcal{O}}}
\newcommand{\calP}{\ensuremath{\mathcal{P}}}

\newcommand{\Mbar}{\ensuremath{\overline{\mathcal{M}}}}

\theoremstyle{plain}
\newtheorem{proposition}{Proposition}
\newtheorem{lemma}[proposition]{Lemma}
\newtheorem{theorem}[proposition]{Theorem}
\newtheorem{corollary}[proposition]{Corollary}
\theoremstyle{definition}

\newtheorem{definition-theorem}[proposition]{Definition-Theorem}
\newtheorem{definition-proposition}[proposition]{Definition-Proposition}
\newtheorem{remark}[proposition]{Remark}

\newtheorem{introthm}{Theorem}

\theoremstyle{definition}

\theoremstyle{plain}

\numberwithin{table}{section}
\numberwithin{proposition}{section}
\numberwithin{conj}{section}    

\title[Structure of higher-genus open-closed GW theory of $X_{p}$]{Structure of higher-genus open-closed Gromov--Witten theory of $\mathcal{O}_{\mathbb{P}^{1}}(p-1)\oplus\mathcal{O}_{\mathbb{P}^{1}}(-p-1)$}

\author[S.~Guo]{Shuai Guo}
\address{School of Mathematical Sciences, Peking University, Beijing 100871, China}
\email{guoshuai@math.pku.edu.cn}

\author[J.~Xu]{Jingyi Xu}
\address{School of Mathematics and Statistics, Wuhan University, Wuhan 430072, China}
\email{jingyi.xu@whu.edu.cn}

\author[Q.~Zhang]{Qingsheng Zhang}
\address{School of Sciences, Great Bay University, Great Bay Institute for Advanced Study, Dongguan 523000, China}
\email{zhangqingsheng@gbu.edu.cn}

\begin{document}

\begin{abstract}
We study the closed and open Gromov--Witten potentials of the toric Calabi--Yau threefold
\[
X_p=\operatorname{Tot}\bigl(
\calO_{\bbP^1}(p-1)\oplus\calO_{\bbP^1}(-p-1)
\bigr),\qquad p\geq 2.
\]
We prove closed and open mirror symmetry under a nonvanishing condition on the torus weights, relating these potentials to topological recursion on the mirror curves.
We establish polynomial structures for both the higher-genus closed potentials and the stable open potentials.
We also establish double-scaling limits for topological recursion on the mirror curves.
In particular, our results for the closed potentials prove the higher-genus ansatz and the double-scaling conjecture of Caporaso--Griguolo--Mari\~no--Pasquetti--Seminara.
\end{abstract}

\maketitle

\setcounter{tocdepth}{1}
\setcounter{section}{-1}

\tableofcontents

\section{Introduction}

Computing higher-genus Gromov--Witten (GW) invariants and understanding their algebraic structure is a central problem in enumerative geometry and mirror symmetry. For Calabi--Yau (CY) threefolds, the pioneering framework of Bershadsky--Cecotti--Ooguri--Vafa (BCOV)~\cite{BCOV94} and its subsequent developments led to  a series of structural predictions: finite generation, the holomorphic anomaly equations, orbifold regularity, and the conifold gap condition~\cite{YY04,HKQ09}. Specifically, finite generation states that, upon invoking the mirror map and appropriate normalizations, higher-genus potentials can be expressed as polynomials in a finite set of generators determined entirely by genus-zero data. The holomorphic anomaly equations recursively determine the nonholomorphic dependence, leaving holomorphic ambiguities that are constrained by the orbifold regularity and the conifold gap. Together with degree bounds, these properties reduce the computation at each genus to finitely many coefficients.

In recent years, remarkable progress has been made toward proving these structural predictions. For the quintic threefold, finite generation and the holomorphic anomaly equations were established by Chang--Guo--Li~\cite{CGL21,CGL26} and Guo--Janda--Ruan~\cite{GJR18}, with the latter also proving orbifold regularity. For other geometries, Lei~\cite{Lei24} proved finite generation for three additional families of one-parameter CY hypersurfaces, while analogous polynomial structures have been uncovered for closed GW potentials and open GW differentials in various toric settings~\cite{LP18,CI21,Wang23,FRZZ19,Zhang18}. The conifold gap conjecture has likewise witnessed major breakthroughs. Guo--Janda--Ruan proposed an approach to the conifold gap via the formal quintic and verified the conjecture computationally up to genus five~\cite{GJR18}. An all-genus proof of the gap condition for both the quintic and the three other one-parameter CY hypersurfaces was recently achieved in~\cite{CGYZ26}. In parallel toric developments, Brini~\cite{Br25} established the conifold gap for the local projective plane, and Fang--Chen--Chen--Zong~\cite{FCCZ26} proved a corresponding conifold gap theorem for topological recursion on toric mirror curves near generic one-node degenerations.

In this paper, we investigate polynomial structures and critical behavior in the closed and open GW theories of the local CY threefolds
\[
X_p=\operatorname{Tot}\!\left(
\mathcal{O}_{\mathbb{P}^{1}}(p-1)\oplus
\mathcal{O}_{\mathbb{P}^{1}}(-p-1)
\right),
\qquad p\in\mathbb{Z}.
\]
Since $X_p\cong X_{-p}$, we may assume $p\geq0$. Despite its simplicity, this family is expected to exhibit analogues of \emph{all} the structural properties of the BCOV framework, but a systematic proof of these properties for $p\geq2$ has been lacking.
In these cases, there are no non-holomorphic generators. We prove that, after suitable normalization, the higher-genus closed potentials and the fixed-winding coefficients of the stable open potentials are polynomials in the global B-model coordinate $\mathbf{q}$. Boundary regularity and critical asymptotics provide counterparts of orbifold regularity and the conifold gap condition: the former yields degree bounds, while the latter determines the leading singular behavior through a universal double-scaling limit. These local curves also arise in the relation between topological strings and $q$-deformed two-dimensional Yang--Mills theory~\cite{AOSV05}.

Before explaining our main results, we note that, in the cases $p=0,1$, 
the closed and open GW potentials can be explicitly computed by the topological vertex and strip formalism~\cite{LLLZ09,IKP06} (see also~\cite{FP00, BP08} for other approaches). 
Moreover, both cases are semiprojective and thus the Remodeling Conjecture~\cite{BKMP09,BKMP10}, proved by Eynard--Orantin~\cite{EO15} and Fang--Liu--Zong~\cite{FLZ20}, applies.
Therefore, we focus on $p\geq2$ in this paper.

\subsection{Main results}
For $p\geq 2$, $X_p$ is not semiprojective and thus the remodeling theorem does not apply.
In~\cite{Mar08}, Mari\~no proposed a mirror curve and conjectured that the closed and open GW potentials of $X_p$ with zero framing can be reconstructed by topological recursion on this curve.
Eynard~\cite{Eyn08} proved Mari\~no's conjecture for the closed part.

Our first main result establishes closed and open mirror symmetry for $X_p$, with the open correspondence holding for every nonexceptional integer framing.
Here we call a complex number $f\in\bbC$ \emph{nonexceptional} if
\[
f(f+1)\bigl(1-f(p-1)\bigr)(1-fp)\neq 0.
\]

On the geometric (A-model) side, we denote by $\mathcal{F}_{g}$ and $\mathcal{F}_{g,n}^{f}$ the genus-$g$ closed and open GW potentials, respectively.
On the mirror curve (B-model) side, the potentials are denoted by $\check{\mathcal{F}}_{g}$ and $\check{\mathcal{F}}_{g,n}^{f}$ respectively.
The A-model coordinates $\mathbf{Q},X_{i}$ and B-model coordinates $\mathbf{q},\widehat{X}_{i}$ are related by the open-closed mirror map
\begin{equation}\label{equ:mirror_map}
    \mathbf{Q}=\mathbf{q}(1-\mathbf{q})^{p^{2}-1},\qquad
    \widehat{X}_{i}=-\frac{X_{i}}{(1-\mathbf{q})^{p+1}}.
\end{equation}

\begin{introthm}[Theorems~\ref{thm:FgAB} and \ref{thm:open_mirror_Xp}]
\label{thm:mirror-intro}
Under the mirror map~\eqref{equ:mirror_map}, we have
\[
\mathcal{F}_{g}(\mathbf{Q})=(-1)^{g-1}\check{\mathcal{F}}_{g}(\mathbf{q})
\]
for $g \geq 2$.
For nonexceptional $f\in\bbZ$, we have
\[
    {\mathcal{F}_{g,n}^{f}}(\mathbf{Q};X_{1},\ldots,X_{n}) = 
    (-1)^{g-1+n} \check{\mathcal{F}}_{g,n}^{f}(\mathbf{q};\widehat{X}_{1},\ldots,\widehat{X}_{n})
\]
for all $g\geq 0$ and $n\geq 1$.
For $(g,n) = (0,1)$, this equality holds for every $f\in\bbZ$.
\end{introthm}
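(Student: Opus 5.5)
The plan is to prove both correspondences by torus localization on the A-side and graph-sum expansion of Eynard--Orantin topological recursion on the B-side, and then match the two graph sums term by term.

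\textbf{A-model.} Choose a Calabi--Yau torus $\bbT'\subset\bbT$ with weights chosen generically; this is where the nonvanishing condition on the weights enters. Since $X_p$ is not semiprojective, the closed and open invariants are defined equivariantly via localization on the two fixed points of the zero section $\bbP^1$. The resulting potentials are independent of the weights on the CY subtorus. The localization formula then expresses $\mathcal{F}_g$ and $\mathcal{F}^f_{g,n}$ as sums over decorated graphs. The vertex contributions are triple Hodge integrals at the two fixed points, and the edge contributions are degree-$d$ covers of the $\bbP^1$. By Givental--Teleman-type reconstruction for the semisimple equivariant theory, I would rewrite this as a graph sum governed by the genus-zero $R$-matrix $R(z)$, the normalized canonical basis, and the dilaton shift. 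The open potentials with framing $f$ arise by gluing disk factors of winding $\mu_i$ along the outer leg, which carry the framing $f$.

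\textbf{B-model.} Expand topological recursion on Mari\~no's mirror curve, which has two branch points corresponding to the two fixed points, into the Dunin-Barkowski--Orantin--Shadrin--Spitz graph sum. Its ingredients are the local expansion of $\omega_{0,1}$ and the Bergman kernel $\omega_{0,2}$ near each branch point. These give a B-model $R$-matrix, via Laplace transform of $\omega_{0,2}$, and leg factors.

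\textbf{Matching.} The matching proceeds in the following order.
\begin{enumerate}
\item Verify the disk case $(0,1)$ directly. Compute the genus-zero one-point open invariants, which for the outer leg are a hypergeometric series in $\mathbf{Q}$ and $X$. Check that under the mirror map~\eqref{equ:mirror_map} they equal $\int \log y\, d\log x$ on the framed curve. This is a Lagrange inversion computation valid for any $f\in\bbZ$, since no stability is used.
\item Match the annulus $\omega_{0,2}$ with the genus-zero two-point function. This fixes the identification of the $R$-matrices: both solve the same quantum differential equation, or Picard--Fuchs equation, in $\mathbf{q}$, with matching asymptotics at the fixed points.
\item Identify the equivariant genus-zero data, namely the $J$-function computed from the $I$-function of $X_p$ with mirror map $\mathbf{Q}=\mathbf{q}(1-\mathbf{q})^{p^2-1}$, with the B-model periods. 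Compare the dilaton leaves and the normalization of idempotents with the values $\omega_{0,1}$ at the branch points.
\end{enumerate}
Once $R$, the leaves, and the dilaton shift agree, the graph sums coincide. The sign $(-1)^{g-1+n}$ comes from the usual orientation and Euler-characteristic conventions. For the closed case with $g\ge2$, this recovers the statement for $\mathcal{F}_g$, with the unstable-term ambiguities absent.

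\textbf{Main obstacle.} I expect the hardest step to be the $R$-matrix identification in the non-semiprojective setting. The A-side $R$ is determined by the quantum differential equation only up to a constant ambiguity, which must be pinned down by degree-zero localization (Hodge integrals on the fixed points). The B-side asymptotics must be shown to match this. I would first compare the $z\to0$ stationary-phase expansion of the oscillatory integral on the mirror curve with the Mumford-type formula for the vertex.

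The nonexceptional condition should arise in the open leg factors. At $f=0,-1,1/(p-1),1/p$, the framed $x$-coordinate degenerates: a branch point escapes, or the open leg factor acquires a pole. The proof must check that for nonexceptional $f$ the framed coordinate $x$ remains a good local coordinate at the branch points, so that the DOSS formula applies.
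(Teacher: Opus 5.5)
Your outline has the same architecture as the paper: Givental--Teleman for the specialized twisted CohFT $\Omega^f$, the DOSS/Eynard graph sum for $\mathcal{C}_f$, $R$ fixed by the QDE plus its $\mathbf{q}=0$ value from quantum Riemann--Roch, and the flat-unit condition for the translation. However, the step that carries the open statement is missing. You assert that ``the leaves agree'' without giving an argument. Disk mirror symmetry alone does not give this: the A-leg $R^f(z)^{-1}\sum_{w}V_w^fX^w/(1-wz)$ involves the full vector $V_w^f=w^2N^f_{0,0,(w)}S^f(1/w)\Phi_0^f\in V_f$, while the disk only sees $\langle\mathbf{1},V_w^f\rangle_f$.

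The missing idea is that $V_w^f$ is determined by two pairings. First, $\langle\mathbf{1},V_w^f\rangle_f$ is $w^2$ times the winding-$w$ disk coefficient, by the string equation. Second, $\langle H,V_w^f\rangle_f=(f/2+D/w)\langle\mathbf{1},V_w^f\rangle_f$, by the divisor equation. On the B-side, the pairings of $W_w^{\rB}$ with $\mathbf{1}$ and $H$ come from $\varphi_0$ and $\varphi_1$. The first identity in \eqref{equ:GM_rational_identities} gives the same relation after the normalization $(-1)^w(1-\mathbf{q})^{-(p+1)w}$. Nondegeneracy of the pairing then yields \eqref{equ:vector_winding_identity}, and $[\widehat{X}^w]\zeta_k^{\bar\beta}=(-w)^k[\widehat{X}^w]\zeta_0^{\bar\beta}$ produces the $1/(1-wz)$ structure with $\fu=-z$.

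This substitution $\fu=-z$, i.e.\ $R^f(z)=\check{R}^f(-z)$ rather than equality at the same argument, is also the real source of the sign. Every graph term of codimension $d$ picks up $(-1)^d$, hence $(-1)^{3g-3+n}=(-1)^{g-1+n}$. ``Orientation and Euler-characteristic conventions'' do not produce this sign.

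Likewise, the annulus is not what identifies $R$; it only involves the scalars $\langle V_a^f,V_b^f\rangle_f$. It is an unstable case needing its own proof. In the paper that proof comes after the leg identity, via $(\rd_{x_1}+\rd_{x_2})\bigl(B/\rd x_1\rd x_2\bigr)\rd x_1\rd x_2=-\sum_\beta\rd\zeta^{\bar\beta}(z_1)\rd\zeta^{\bar\beta}(z_2)$ together with the two-point descendant formula. You give no argument for it.

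Second, your justification that the disk identity holds for every $f\in\bbZ$ ``since no stability is used'' misidentifies the obstruction, and it leaves the A-side unproved at exceptional framings. The framing is not independent of the torus weights. Framing $f$ means specializing $\sfv=f\sfu$, and nonexceptional means precisely that the fixed-point tangent weights $f,1,-1-f$ and $-f,1-(p-1)f,pf-1$ stay nonzero. So for fixed $f$ you cannot choose the weights generically. At exceptional $f$ the pairing $\langle\,,\rangle_f$ degenerates, and the specialized $S^f$ and $\Phi_0^f$ are unavailable. Any $J$-function computation of the A-side disk is therefore unavailable too; only the Lagrange-inversion side is unconditional.

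The paper handles this in steps:
\begin{enumerate}
\item It proves the disk identity at nonexceptional $f$: the normalized B-coefficient $(1-\mathbf{q})^{-(p+1)w}P_w^f$ satisfies the scalar QDE \eqref{equ:disk_scalar_QDE}, which uniquely determines $J_0^f(\mathbf{Q},1/w)$.
\item It uses that the positive-degree A-coefficients are rational in $f$. Because the specialization $\sfv=f\sfu$ is coefficientwise well defined at every integer $f$, they are regular there.
\item The B-coefficients are polynomial in $f$, so agreement at infinitely many integers gives agreement at all integers.
\item Degree zero is checked directly from the formula for $N^f_{0,0,(w)}$.
\end{enumerate}
Some extension argument of this kind is needed for the last clause of the statement.
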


Our second main result establishes polynomial structures for both closed and open GW potentials of $X_p$ with arbitrary framing.
For $g\geq 0$, $n\geq 1$, and $\boldsymbol{\mu}=(\mu_{1},\ldots,\mu_{n})\in\bbZ_{>0}^{n}$, we denote
\[
\mathcal{F}^{f}_{g,\boldsymbol{\mu}}(\mathbf{q}) = [X_{1}^{\mu_{1}}\cdots X_{n}^{\mu_{n}}] {\mathcal{F}_{g,n}^{f}}(\mathbf{Q};X_{1},\ldots,X_{n}),
\]
where the closed mirror map is applied.
We write $|\boldsymbol{\mu}|\coloneqq\sum_{i=1}^{n}\mu_i$.

\begin{introthm}[Theorems~\ref{thm:FgXp} and~\ref{thm:open_formal_p_polynomiality}]
\label{thm:FgXp-intro}
Let
\(
\Delta=\Delta(\mathbf{q}) = \frac{1-p^{2}\mathbf{q}}{1-\mathbf{q}}.
\)
For $g\geq 2$, we have
\[
\mathcal{F}_{g} = (-1)^{g-1} \frac{\calP_{g}(\Delta,p^{2})}{\Delta^{5g-5}},
\]
where $\calP_{g}(\Delta,p^{2})$ is a polynomial in $\Delta$ of degree at most $5g-5$, with coefficients in $(p^{2}-1)^{-(2g-2)}\bbQ[p^{2}]$.
For $n>0$, $2g-2+n>0$, and $\boldsymbol{\mu}\in\bbZ_{>0}^{n}$, we have
\[
(1-\mathbf{q})^{(p+1)|\boldsymbol{\mu}|}\mathcal{F}^{f}_{g,\boldsymbol{\mu}}(\mathbf{q}) =
\frac{\mathcal{P}_{g,\boldsymbol{\mu}}(p,f;\mathbf{q})}{(1-p^{2}\mathbf{q})^{5g-5+2n}},
\]
where
\(
\mathcal{P}_{g,\boldsymbol{\mu}}(p,f;\mathbf{q}) \in\bbQ[p,f,\mathbf{q}],
\)
and
\(
\deg_{\mathbf{q}}\mathcal{P}_{g,\boldsymbol{\mu}} \leq 5g-5+2n+|\boldsymbol{\mu}|.
\)

\end{introthm}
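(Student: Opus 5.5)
The plan is to work entirely on the B-model side, using Theorem~\ref{thm:mirror-intro} to transfer everything to topological recursion on Mari\~no's mirror curve, and to reduce polynomiality to a local analysis at the ramification points. The basic tool is a rational parametrization of the mirror curve. After the mirror map \eqref{equ:mirror_map}, the curve $\widehat{X}$-equation for $X_p$ admits a uniformization $\widehat{X}=x(z)$, $Y=y(z)$ by rational functions of a global coordinate $z$ on $\bbP^1$. The coefficients of these functions are rational in $\mathbf{q}$, and their only singular denominators are powers of $(1-\mathbf{q})$.

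The ramification points of $x$ are the roots of a quadratic in $z$ whose discriminant is proportional to $1-p^{2}\mathbf{q}$; equivalently, it is proportional to $\Delta$ up to the factor $1-\mathbf{q}$. The first step is to compute the local data at the two branch points $a_\pm$: the values of $\omega_{0,1}$, the local involution, and the Taylor coefficients of $y\,dx$ in the local coordinate. These data lie in $\bbQ(p,\mathbf{q})[\sqrt{1-p^{2}\mathbf{q}}]$, and $\sqrt{1-p^{2}\mathbf{q}}$ appears only through quantities odd under the exchange $a_+\leftrightarrow a_-$.

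The second step is a standard induction on $2g-2+n$ via the recursion kernel. Each $\omega_{g,n}$ is a rational form whose poles lie only at $a_\pm$, with order at most $6g-6+2n+2$. Its coefficients, expanded in the Bergman-kernel basis $d\xi_{k}^{\pm}$, are polynomials in the local Taylor data divided by powers of $y'(a_\pm)\,x''(a_\pm)$. Since $y'(a_\pm)\,x''(a_\pm)$ behaves like $\sqrt{1-p^{2}\mathbf{q}}$ up to units, each step of the recursion costs one factor of $(1-p^{2}\mathbf{q})^{-1/2}$ for each kernel application, together with derivatives of the branch-point positions. Counting kernel applications with the usual Euler-characteristic bookkeeping should give the denominator $(1-p^{2}\mathbf{q})^{5g-5+2n}$, and symmetrizing over $a_\pm$ removes the square roots.

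For the open part, we take the coefficient of $\prod X_i^{\mu_i}$ by expanding $\omega_{g,n}$ near the open point $z=z_0$ where $\widehat X=0$, in the framed coordinate. Expanding $d\xi_k^\pm$ in $\widehat{X}$ yields polynomials in $\mathbf{q}$ and $f$, divided by the powers of $(1-\mathbf{q})$ that are absorbed into the normalization $(1-\mathbf{q})^{(p+1)|\boldsymbol\mu|}$. The degree bound $5g-5+2n+|\boldsymbol\mu|$ should then follow from analysing the behaviour as $\mathbf{q}\to\infty$, i.e.\ a boundary-regularity statement near the conifold/orbifold point.

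For the closed part, $\check{\mathcal{F}}_g$ is obtained from $\omega_{g,0}$ through the dilaton-type formula $\check{\mathcal F}_g=\frac{1}{2-2g}\sum\mathrm{Res}\,\Phi\,\omega_{g,1}$. This gives a rational function of $\mathbf{q}$ with poles only at $1-p^{2}\mathbf{q}=0$ and possibly at $\mathbf{q}=1$. We then rewrite it in terms of $\Delta$ and show that the resulting expression, as a function of $\Delta$, has poles only at $\Delta=0$, of order at most $5g-5$. Its regularity at $\Delta=\infty$, that is at $\mathbf{q}=1$, must be checked separately by expanding the curve near $\mathbf{q}=1$. The factor $(p^2-1)^{-(2g-2)}$ comes from the rescaling that relates $\Delta$ to the local branch data.

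The main obstacle is the degree bounds: regularity at $\mathbf{q}=1$ in the closed case, and the bound $\deg_{\mathbf{q}}$ in the open case. The pole order in $1-p^{2}\mathbf{q}$ is fairly mechanical. For the degree bounds, I would first try to find a rescaling of $z$ under which the mirror curve has a smooth limit as $\mathbf{q}\to1$ (respectively $\mathbf{q}\to\infty$). Topological recursion would then commute with the limit, which gives the required vanishing order.
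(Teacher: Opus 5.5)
\textbf{There is a genuine gap: the proposal's central structural claim holds at only one framing.} The ramification points are the zeros of $N_f(z)=1+f(1-p\mathbf{q})z+\mathbf{q}\gamma_f z^2$. Its discriminant $f^2(1-p\mathbf{q})^2-4\mathbf{q}\gamma_f$ equals $(f(p-1)-2)^2/p^2$ at $\mathbf{q}=p^{-2}$, so it is proportional to $(1-\mathbf{q})(1-p^2\mathbf{q})$ only for $f=f_\star=2/(p-1)$.

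\textbf{Generic framing.} For Mari\~no's zero-framing curve ($N_0=1-\mathbf{q}z^2$) and for generic $f$, the branch points $-p$ and $-p/\gamma_f$ stay apart at $\mathbf{q}=p^{-2}$. The degeneration there is instead that $\rd y$ vanishes at the branch point $-p$ (cf.\ Lemma~\ref{lem:critical_branch_structure}). The branch points do collide, but at the two roots of the discriminant, which for generic $f$ are neither $p^{-2}$ nor $1$. Your kernel-by-kernel bookkeeping produces candidate poles there, and nothing in the proposal removes them. Since the open statement is for arbitrary $f$, this is not a side issue.

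\textbf{The count is wrong even at $f_\star$.} There $y'(a_\pm)$ and $x''(a_\pm)$ are each of order $\Delta^{1/2}$. Moreover, every higher Taylor coefficient of $y$ in the local Airy coordinates blows up, because the two branch points are at distance $O(\Delta^{1/2})$. The true cost is $\Delta^{-5/2}$ per kernel (the $\varepsilon^{-5}$ in Lemma~\ref{lem:critical_regular_recursion_weights}), with $\Delta^{1/2}$ recovered per open leg. One factor $(1-p^2\mathbf{q})^{-1/2}$ per kernel would instead give exponent $(2g-2+n)/2$, so ``should give $5g-5+2n$'' is unsupported.

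\textbf{What a pole bound at $\Delta=0$ does not give.} It does not make the answer a Laurent polynomial in $\Delta$. For that you also need regularity at $\mathbf{q}=\infty$ (that is, $\Delta=p^2$) and at $\mathbf{q}=0$. Nor does it explain polynomial dependence on $p$ and $f$: the individual $\rd\xi^\pm_k$ involve $a_\pm$ and $\sqrt{x''(a_\pm)}$, and symmetrizing only yields rational dependence. It also does not produce the coefficient ring $(p^2-1)^{-(2g-2)}\mathbb{Q}[p^2]$.

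\textbf{How the paper supplies the missing ingredients}, none of them by local analysis of the curve:
\begin{itemize}
\item At $f_\star$ one has $B_{f_\star}=0$, $L_1=-L_0$ and $\Delta_1=-\Delta_0$. Written in $L=\Delta^{-1/2}$, the quantum differential equation becomes a recursion for the $R$-matrix coefficients. With initial values from quantum Riemann--Roch and algebraicity (which rules out logarithms), the solutions lie in $(p^2-1)^{-k}L^{-k}\mathbb{Q}[p^2][L^2]_{\le 2k}$ (Proposition~\ref{prop:R_properties}(iii)).
\item The Givental--Teleman graph sum then gives exact powers of $\Delta$ and $p^2-1$ (Theorem~\ref{thm:Omega}). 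This yields the closed bound and the open pole order $5g-5+2n$ at $f_\star$.
\item The open statement is extended to arbitrary $f$ by integrating the cut-and-join equation in $f$ (from the topological vertex) starting at $f_\star$ (Proposition~\ref{prop:open_framing_filtration}). Possible poles of the generic-$f$ recursion at the discriminant zeros therefore never have to be ruled out directly.
\item Your endpoint-rescaling idea matches Proposition~\ref{prop:R_properties}(i) and Proposition~\ref{prop:open_B_properties}. Note, however, that at $f_\star$ both critical points collide at $z=1$ as $\mathbf{q}\to1$. For the closed degree bound you must therefore pass to some $f\neq f_\star$ via framing independence.
\item Polynomiality in $p$ and $f$ comes from the topological vertex formula and the $\mathbb{Q}[p]$-coefficients of the mirror map. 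The $\mathbf{q}$-coefficients above the degree bound are killed because they vanish at infinitely many integral $(p_0,f_0)$. This is also how one passes beyond the nonexceptional integral framings where Theorem~\ref{thm:open_mirror_Xp} applies.
\end{itemize}
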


The closed part of Theorem~\ref{thm:FgXp-intro} proves the ansatz of~\cite[Equation~(6.2)]{CGMPS07}.

Our third main result establishes double-scaling limits of the B-model differentials and closed potentials near the critical point $\mathbf{q}=p^{-2}$.
Let $\omega_{g,n}^{f}$ denote the Eynard--Orantin differentials of the spectral curve \eqref{equ:SpecCurv}.
Fix $f\in\mathbb{C}$ and $\tau\in\mathbb{C}^{*}$, and set
\[
    \mathbf{q}_{\varepsilon} \coloneqq
    \frac{1-\varepsilon^{2}\tau}{p^{2}-\varepsilon^{2}\tau},
    \qquad \iota_{\varepsilon}(u)\coloneqq -p+\varepsilon u.
\]
Thus $\Delta(\mathbf{q}_{\varepsilon})=\varepsilon^{2}\tau$.
Up to rescaling and translation, the limiting spectral curves are those of the $(3,2)$ minimal model \eqref{equ:generic_double_scaled_curve} for $f\neq f_{\star}$ and the $(2,3)$ minimal model \eqref{equ:special_double_scaled_curve} for $f=f_{\star}$, where
\[
    f_{\star}\coloneqq \frac{2}{p-1}.
\]
Their Eynard--Orantin differentials and free energies are denoted by $\omega_{g,n}^{f,\tau,\mathrm{ds}}$ and $F_{g}^{f,\tau,\mathrm{ds}}$, respectively.
For $g\geq 2$, the free energies $F_{g}^{f,\tau,\mathrm{ds}}$ are independent of $f$ (see Lemma~\ref{lem:common_ds_free_energy}).

\begin{introthm}[Theorem~\ref{thm:double_scaling_TR}]
\label{thm:double-scaling-intro}
For $n>0$ and $2g-2+n>0$, we have
\[
    \lim_{\varepsilon\to 0} \varepsilon^{5(2g-2+n)}(\iota_{\varepsilon}^{\times n})^{*} \omega_{g,n}^{f}(\mathbf{q}_{\varepsilon}) =
    \omega_{g,n}^{f,\tau,\mathrm{ds}}.
\]
For $g\geq 2$, we have
\[
    \lim_{\varepsilon\to 0} \varepsilon^{10g-10} \check{\mathcal{F}}_{g}(\mathbf{q}_{\varepsilon}) =
    F_{g}^{f,\tau,\mathrm{ds}}.
\]
\end{introthm}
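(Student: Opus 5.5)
The plan is to prove Theorem~\ref{thm:double-scaling-intro} by showing that the whole topological recursion data of the spectral curve \eqref{equ:SpecCurv} has a uniform limit near the critical point, and that topological recursion commutes with this limit.

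\textbf{Step 1: degeneration of the ramification points.} As $\mathbf{q}\to p^{-2}$, i.e.\ $\Delta\to0$, some ramification points of $x$ on the spectral curve coalesce at the local coordinate value $-p$. I first locate them. At $\mathbf{q}_{\varepsilon}$ the coalescing points sit at $-p+\varepsilon a_i(\tau)+O(\varepsilon^2)$. The remaining ramification points stay at distance bounded away from $-p$.

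\textbf{Step 2: local expansion of the curve data.} I expand the curve data in the rescaled variable $u$, with $z=\iota_{\varepsilon}(u)$. This means expanding $x(\iota_\varepsilon(u))$ and $y(\iota_\varepsilon(u))$, suitably recentred by subtracting constants and linear terms that do not affect $\omega_{g,n}$ for $2g-2+n>0$. The goal is to show
\[
x\circ\iota_\varepsilon = c_0+\varepsilon^{a}\,\xi(u)+o(\varepsilon^{a}),\qquad
y\circ\iota_\varepsilon = d_0+d_1 x+\varepsilon^{b}\,\eta(u)+o(\varepsilon^b),
\]
where $(\xi,\eta)$ parametrize the double-scaled curve \eqref{equ:generic_double_scaled_curve}. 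For $f=f_\star$, the leading coefficient of the cubic term of $x$ vanishes, which exchanges the roles of degrees two and three; this gives the $(2,3)$ curve \eqref{equ:special_double_scaled_curve} instead.

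The exponents should satisfy $a+b=5$, since $\omega_{0,1}=y\,dx$ scales as $\varepsilon^{5}$. Because topological recursion is homogeneous, $\omega_{g,n}\mapsto\lambda^{2-2g-n}\omega_{g,n}$ under $y\,dx\mapsto\lambda\,y\,dx$, this yields the factor $\varepsilon^{5(2g-2+n)}$. The Bergman kernel is that of a genus-zero curve, so $B(z_1,z_2)=dz_1dz_2/(z_1-z_2)^2$ is invariant under the affine map $\iota_\varepsilon$.

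\textbf{Step 3: induction on $2g-2+n$.} I write each $\omega_{g,n}^f$ as a sum of residues over all ramification points. The residues at the coalescing points are computed on small circles $|u|=R$ in the rescaled coordinate. There the recursion kernel, rescaled by $\varepsilon^{5}$, converges uniformly to the kernel of the limiting curve, and by induction the lower $\omega$'s converge uniformly on compacts away from the limiting ramification points. So these residues converge to the double-scaled recursion.

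The residues at the noncoalescing points must be shown to give $o(\varepsilon^{-5(2g-2+n)})$ after pullback. The cleanest way is to note that $\omega_{g,n}$ has poles only at ramification points. Pulled back and multiplied by $\varepsilon^{5(2g-2+n)}$, the contributions from distant points are holomorphic near $u$ and have lower order, because their defining data do not blow up. I expect this bookkeeping to be the main obstacle. It requires uniform control showing that mixed terms in the recursion, where some arguments approach distant points, remain subleading. I would handle it by an inductive bound: a pole of order $k$ at a coalescing point contributes $\varepsilon^{-k-\ldots}$, while distant data contributes $O(1)$ in $\varepsilon$.

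\textbf{Step 4: free energies.} For $g\ge2$, I use $\check{\mathcal F}_g=\frac1{2-2g}\sum_a\operatorname{Res}\Phi\,\omega_{g,1}$, where $d\Phi=y\,dx$. The same localization argument gives the limit with the factor $\varepsilon^{10g-10}$. The additive ambiguity in $\Phi$ is harmless, since $\sum\operatorname{Res}\omega_{g,1}=0$.

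Alternatively, one can use Theorem~\ref{thm:FgXp-intro}. Since $\Delta(\mathbf{q}_\varepsilon)=\varepsilon^2\tau$ and $\check{\mathcal F}_g=\mathcal P_g/\Delta^{5g-5}$, the limit is simply the leading coefficient $\mathcal P_g(0,p^2)\tau^{5-5g}$. This must be identified with $F_g^{f,\tau,\mathrm{ds}}$, which is independent of $f$ by Lemma~\ref{lem:common_ds_free_energy}. That identification can be done by the residue argument above.
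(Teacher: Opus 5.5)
\paragraph{Step 3.} For $n>0$ your plan is the paper's: shear $y$ by a multiple of $x$, rescale, show that residues at critical points tending to $-p$ converge to the double-scaled recursion, and show the rest is subleading. The conclusion of that last step is right, but your reason for it is not. A residue at the distant critical point involves lower $\omega$'s with some arguments near $-p$. Those contain residues at the near point, so their ``defining data'' do blow up.

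What works is counting recursion kernels in the full iterated-residue expansion:
\begin{itemize}
\item every term of $\omega_{g,n}$ contains exactly $2g-2+n$ kernels;
\item a kernel at a critical point tending to $-p$ is $O(\varepsilon^{-5})$ in the $u$-coordinate, while a kernel at the distant point is $O(1)$;
\item $B$ is unchanged, or $O(1)$, or $O(\varepsilon)$ when one variable is rescaled and the other lies near the distant point.
\end{itemize}
So a term with $k$ near kernels is $O(\varepsilon^{-5k})$, and it dies after multiplication by $\varepsilon^{5(2g-2+n)}$ unless $k=2g-2+n$.

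Two small corrections. For $f\neq f_\star$ only one ramification point tends to $-p$; the degeneration is that $\mathrm{d}y$ vanishes there, which your shear removes. At $f=f_\star$ it is the quadratic coefficient of $x$ (proportional to $f-f_\star$) that vanishes, not the cubic one.

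\paragraph{Step 4: the genuine gap.} ``The same localization argument'' does not control the distant critical point $a$ in the free-energy residue formula. This point is present for generic $f\neq f_\star$ and tends to $-p/\gamma_f$. At $a$ the weight $\Phi$ is $O(1)$ rather than $O(\varepsilon^{5})$, and no rescaled external variable supplies a factor of $\varepsilon$.

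You first need to observe that only terms of $\omega_{g,1}$ whose outermost residue is at $a$ contribute, since the others are holomorphic at $a$. These terms have at most $2g-2$ near kernels. Kernel counting then gives only $\varepsilon^{10g-10}\cdot O(\varepsilon^{-5(2g-2)})=O(1)$, which is the same order as the main term.

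The paper closes this as follows. When $k=2g-2$, the kernels at the next recursion step have numerator $\int B(w,\cdot)$ with $w$ near $a$ and the integration variable near $-p$, and this integral is $O(\varepsilon)$.

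Alternatively, you can avoid the issue entirely:
\begin{itemize}
\item $\check{\mathcal F}_g$ is independent of $f$ (Theorem~\ref{thm:FgAB}), and $f_\star$ is nonexceptional;
\item at $f=f_\star$ both critical points tend to $-p$ (Lemma~\ref{lem:critical_branch_structure}(ii)), so only near residues occur and your Step~3 suffices;
\item Lemma~\ref{lem:common_ds_free_energy} then transfers the result to every $f$.
\end{itemize}
Your polynomiality alternative inherits the same gap, as you note yourself.

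Finally, the ambiguity in $\Phi$ is harmless because $\omega_{g,1}$ has zero residue at each ramification point separately. The primitive is only defined locally, so the vanishing of the total residue is not enough.
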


In particular, we obtain (see Corollary~\ref{cor:Pg0p})
\[
    \calP_{g}(0,p^{2}) =
    \frac{p^{6g-6}}{(p^{2}-1)^{2g-2}} \frac{1}{(3g-3)!} \left\langle\tau_{2}^{3g-3}\right\rangle_{g},
    \qquad g\geq 2.
\]
Here $\langle\tau_{2}^{3g-3}\rangle_{g}=\int_{\Mbar_{g,3g-3}}\prod_{i=1}^{3g-3}\psi_{i}^{2}$.
With the conventions in Remark~\ref{rem:notation}, this proves the conjecture of~\cite[Equation~(6.21)]{CGMPS07}; see Corollary~\ref{cor:dslimit}.
At $f=0$, we recover the double-scaling limits of the disk and annulus amplitudes in~\cite[Equations~(3.41) and~(3.43)]{Mar08}, after suitable changes of variables and normalization.

\subsection{Outline of the paper}
This paper is organized as follows.
In Section~\ref{sec:formal_open_closed}, we introduce the closed and open GW potentials of $X_{p}$ and describe their graph sum formulas.
The spectral curve and the proof of closed and open mirror symmetry are presented in Section~\ref{sec:topological_recursion_mirror}.
Section~\ref{sec:proof_main} establishes polynomiality results for the closed and open potentials.
In Section~\ref{sec:double_scaling}, we study the double-scaling limits, prove the double-scaling conjecture of~\cite{CGMPS07}, and compare the disk and annulus limits with Mari\~no's results~\cite{Mar08}.

 \subsection*{Acknowledgments}
This work was presented by the first author at Shandong University in 2025 and at Sichuan University in 2026.
The first author would like to thank the organizers for their invitations and Qile Chen, Bohan Fang, Yunfeng Jiang, Xin Wang, and Yaoxiong Wen for helpful discussions.
He also thanks Patrick Lei for taking notes on his talk~\cite{Lei26}.
The third author would like to thank Fudong Wang and Zhiyong Wang for helpful discussions.
Some of the results in this paper are contained in Jingyi Xu's PhD thesis at Peking University, which was completed at the end of 2025.

\subsection*{Use of generative AI}
The mathematical results and proofs in this paper were obtained by the authors without the use of generative AI.
Generative AI tools were used only for subsequent editorial assistance with language, notation, and references.
The authors take full responsibility for the content of this paper.

\vspace{2cm}

\section{Open-closed Gromov--Witten theory and reconstruction}
\label{sec:formal_open_closed}

We first describe the toric geometry of $X_{p}$ and define its closed theory using twisted GW theory of $\bbP^{1}$.
The associated cohomological field theory (CohFT)~\cite{KM94} is semisimple, so the Givental--Teleman reconstruction theorem ~\cite{Giv01Quant,Tel12} applies.
We then define the open theory using formal relative GW invariants and express its potentials in terms of twisted descendant invariants.
Applying the ancestor--descendant relation~\cite{KM98,Giv01Quant} and Givental--Teleman reconstruction yields a graph sum formula for the open potentials.

\subsection{Geometry and closed Gromov--Witten theory}\label{subsec:toricgeom}
We recall the description of $X_p$ in terms of its fan.
We refer the reader to~\cite{Oda88,Ful93,CLS11} for background on toric geometry, including the construction of toric varieties from fans and the orbit--cone correspondence.

We choose a basis $\{e_{1},e_{2},e_{3}\}$ of $N\cong\bbZ^{3}$ and set $M\coloneqq N^{\vee}$, $\bbT\coloneqq\operatorname{Hom}(M,\bbC^{*})$.
For $m\in M$, denote the corresponding character of $\bbT$ by $\chi^{m}$.
The fan of $X_{p}$ has ray generators
\[
    b_{1}=(-p+1,-1,1),\quad b_{2}=(0,1,1),\quad b_{3}=(1,0,1),\quad b_{4}=(0,0,1),
\]
and maximal cones $\operatorname{Cone}(b_{2},b_{3},b_{4})$ and $\operatorname{Cone}(b_{1},b_{3},b_{4})$, corresponding to the fixed points $x_{0}$ and $x_{\infty}$, respectively.
Since $\langle e_{3}^{\vee},b_{i}\rangle=1$ for all $i$, the Calabi--Yau subtorus is
\[
    \bbT'\coloneqq \ker\bigl(\chi^{e_{3}^{\vee}}\colon\bbT\longrightarrow\bbC^{*}\bigr) \cong(\bbC^{*})^{2}.
\]
The orbit closures corresponding to $\operatorname{Cone}(b_{3},b_{4})$ and $\operatorname{Cone}(b_{2},b_{4})$ are $C_{0}\cong\bbP^{1}$ and $C_{\fB}^{\circ}\cong\bbC$, respectively.
We then choose an outer Aganagic--Vafa brane $\fB$ meeting $C_{\fB}^{\circ}$.

Let $\sfu\coloneqq c_{1}^{\bbT'}(T_{x_{0}}C_{\fB}^{\circ})$ and $\sfv\coloneqq c_{1}^{\bbT'}(T_{x_{0}}C_{0})$. Then $H^{*}_{\bbT'}(\mathrm{pt})=\bbC[\sfu,\sfv]$.
The $\bbT'$-weights on $T_{x_{0}}X_{p}$ are $\sfv$, $\sfu$, and $-\sfu-\sfv$, and those on $T_{x_{\infty}}X_{p}$ are $-\sfv$, $\sfu-(p-1)\sfv$, and $-\sfu+p\sfv$, with the weights along $C_{0}$ listed first.
An integer framing $f$ determines the subtorus
\[
    \bbT_{f}\coloneqq \ker\bigl(\chi^{e_{2}^{\vee}-fe_{1}^{\vee}}\big|_{\bbT'}\bigr) \subset\bbT',
\]
and restriction to $\bbT_{f}$ gives $\sfv=f\sfu$ and $H^{*}_{\bbT_{f}}(\mathrm{pt})=\bbC[\sfu]$.

\subsubsection{Twisted Gromov--Witten theory}
\label{subsec:closed_GW}

Let $E_{p}\coloneqq\calO_{\bbP^{1}}(p-1)\oplus\calO_{\bbP^{1}}(-p-1)$.
Denote by $t$ the K\"ahler parameter of $C_{0}$, and set
\[
    \mathbf{Q}\coloneqq(-1)^{p+1}\ee^{-t}.
\]

The closed GW theory of $X_{p}$ is defined as $\bbT'$-equivariant GW theory of $\bbP^{1}$ twisted by $e_{\mathbb{T}'}^{-1}(E_{p})$.
We choose $H\in H^{2}_{\mathbb{T}'}(\mathbb{P}^{1})$ such that
\[
    H|_{x_{0}}=\frac{\sfv}{2},\qquad H|_{x_{\infty}}=-\frac{\sfv}{2}.
\]
The state space is then
\[
    V_{\bbT'}\coloneqq H^{*}_{\bbT'}(\bbP^{1})\otimes_{\bbC[\sfu,\sfv]}\bbC(\sfu,\sfv) = \frac{\bbC(\sfu,\sfv)[H]}{(H^{2}-\sfv^{2}/4)}.
\]
The universal curve and universal map are denoted by
\[
    \pi\colon\mathcal{C}_{g,n,d}\longrightarrow \Mbar_{g,n}(\bbP^{1},d),
    \qquad F\colon\mathcal{C}_{g,n,d}\longrightarrow\bbP^{1},
\]
and the forgetful and evaluation maps by
\[
    \operatorname{st}\colon \Mbar_{g,n}(\bbP^{1},d)\longrightarrow\Mbar_{g,n},
    \qquad \operatorname{ev}_{i}\colon \Mbar_{g,n}(\bbP^{1},d)\longrightarrow\bbP^{1}.
\]
The associated $\bbT'$-equivariant CohFT is
\[
    \Omega^{\bbT'}_{g,n}(\alpha_{1},\ldots,\alpha_{n})
    \coloneqq
    \sum_{d\geq 0}\ee^{-dt}\operatorname{st}_{*}
    \Bigl(
    e_{\bbT'}^{-1}\bigl(\mathrm{R}^{\!\bullet}\pi_{*}F^{*}E_{p}\bigr)
    \prod_{i=1}^{n}\operatorname{ev}_{i}^{*}\alpha_{i}
    \frown[\Mbar_{g,n}(\bbP^{1},d)]^{\mathrm{vir}}_{\bbT'}
    \Bigr),
\]
with pairing
\[
    \langle a,b\rangle_{\bbT'} \coloneqq\int_{\bbP^{1}} a\smile b\smile e_{\bbT'}^{-1}(E_{p}).
\]

We now regard \(f\) as a formal parameter and define
\[
    \Omega^{f} \coloneqq \left.\left(\left.\Omega^{\bbT'}\right|_{\sfv=f\sfu}\right)\right|_{\sfu=1}.
\]
It is a CohFT with state space $V_{f}\coloneqq{\bbC(f)[H]}/(H^{2}-f^{2}/4)$. 
Localization then gives the pairing on $V_{f}$ as
\[
    \langle a,b\rangle_{f} \coloneqq -\frac{a|_{H=f/2}\,b|_{H=f/2}}{f(f+1)}+\frac{a|_{H=-f/2}\,b|_{H=-f/2}}{f(1-f(p-1))(1-fp)}.
\]
Note that this pairing is well-defined and nondegenerate at a nonexceptional value of $f$.

For $d\geq 1$ or $g\geq 2$, we define the GW invariants of $X_{p}$ by
\[
    N_{g,d} \coloneqq \int_{[\Mbar_{g,0}(\bbP^{1},d)]^{\mathrm{vir}}_{\bbT'}}e_{\bbT'}^{-1}\left(\mathrm{R}^{\!\bullet}\pi_{*}F^{*}E_{p}\right)\in\bbQ(\sfu,\sfv).
\]
For $d\geq 1$, all $\bbT'$-fixed maps are supported on $C_{0}$, so localization identifies $N_{g,d}$ with the formal relative invariant of the formal toric Calabi--Yau graph of $X_{p}$ with empty partitions on all external legs.
Applying~\cite[Theorem~4.8]{LLLZ09} shows that $N_{g,d}\in\bbQ$ and is independent of $\sfu,\sfv$.
For $d=0$, virtual localization on $\Mbar_{g}\times\bbP^{1}$ implies (cf. \cite{FP00})
\[
    N_{g,0} = \frac{(-1)^{g}}{(2g-2)!}\frac{|B_{2g-2}|}{2g-2}\frac{|B_{2g}|}{2g},\qquad g\geq 2,
\]
where $B_{2k}$ are Bernoulli numbers.

To specify the potentials in genus zero and genus one, we first compute the degree-zero part of the Yukawa coupling, we have
\[
    C_f \coloneqq \langle H^{2},H\rangle_{f}=-\frac{f^{2}}{8}\left(\frac{1}{f+1}+\frac{1}{(1-f(p-1))(1-fp)}\right).
\]
In genus one, we have
\[
    \left.\int_{\Mbar_{1,1}}\Omega^{f}_{1,1}(H)\right|_{\mathbf{Q}=0}
    = -\frac{1}{24}+\frac{1}{6}C_f.
\]
Therefore, we define the genus-$g$ GW potential by
\[ \mathcal{F}_{g}(t) \coloneqq
\begin{cases}
    \frac{(-t)^{3}}{6}C_f
    +\sum_{d\geq 1}N_{g,d}\ee^{-dt}, & g = 0, \\[0.6em]
    (-t)\left(-\frac{1}{24}+\frac{1}{6}C_f\right)
    +\sum_{d\geq 1}N_{g,d}\ee^{-dt}, & g = 1, \\[0.6em]
    \sum_{d\geq 0} N_{g,d} \ee^{-dt}, & g \geq 2.
\end{cases}
\]
We note here that for $g=0,1$, the degree-zero terms of $\mathcal{F}_{g}$ depend on $f$ through $C_f$.
With this definition,
\[
    \int_{\Mbar_{0,3}}\Omega_{0,3}^{f}(H,H,H)=-\partial_{t}^{3}\mathcal{F}_{0}(t),\quad \int_{\Mbar_{1,1}}\Omega_{1,1}^{f}(H)=-\partial_{t}\mathcal{F}_{1}(t),
\]
while for $g\geq 2$,
\[
    \mathcal{F}_{g}(t)=\int_{\Mbar_{g}}\Omega_{g,0}^{f}.
\]

\subsubsection{Quantum product and semisimplicity}

For $d\geq 1$, the genus-zero invariants are
\[
    N_{0,d} = \frac{(-1)^{(p+1)d-1}}{d^{3}}\binom{p^{2}d-1}{d-1};
\]
see~\cite[Equation~(4.53)]{CGMPS07} or~\cite[Theorem~3.2]{GNS23}. Using the closed mirror map in \eqref{equ:mirror_map}, we obtain the Yukawa coupling 
\[
    -\partial_{t}^{3}\mathcal{F}_{0}(t)=C_f+\frac{\mathbf{q}}{p^{2}\mathbf{q}-1}.
\]
This determines the quantum product $\ast_{f}$ for $\Omega^{f}$ in the basis $\{\mathbf{1},H\}$ as
\begin{equation}\label{equ:quantum_product_H}
    \renewcommand{\arraystretch}{1.5}
    \setlength{\arraycolsep}{6pt}
    H\,\ast_{f}=\left(
    \begin{array}{c@{\;\;}c}
    0 & \dfrac{f^{2}(1-\mathbf{q})+(2-(p-1)f)^{2}\mathbf{q}}
    {4(1-p^{2}\mathbf{q})} \\
    1 & \dfrac{p(2-(p-1)f)\mathbf{q}}{1-p^{2}\mathbf{q}}
    \end{array}
    \right).
\end{equation}
We denote the two eigenvalues of \eqref{equ:quantum_product_H} by $L_{0}^{f}$ and $L_{1}^{f}$, labeled so that
\[
    L_{0}^{f}\big|_{\mathbf{q}=0}=\frac{f}{2}\quad\text{at }x_{0},\qquad
    L_{1}^{f}\big|_{\mathbf{q}=0}=-\frac{f}{2}\quad\text{at }x_{\infty}.
\]
Thus the CohFT $\Omega^{f}$ is semisimple.
The corresponding canonical basis is
\[
    e_{\alpha}^{f}=\frac{H-L_{1-\alpha}^{f}\mathbf{1}}{L_{\alpha}^{f}-L_{1-\alpha}^{f}},\qquad \alpha=0,1.
\]
We denote its dual basis by $\{(e^{f})^{\alpha}\}_{\alpha=0,1}$ and set $\Delta^{f}_{\alpha} \coloneqq \langle e^{f}_{\alpha},e^{f}_{\alpha}\rangle_{f}^{-1}$.
Then
\begin{equation}\label{equ:Delta_f}
\Delta_{\alpha}^{f} = -p(2-(p-1)f)
\left((L_{\alpha}^{f})^{2}+\frac{f^{2}}{4}\right)
-\frac{(2-(p-1)f)^{2}+(p^{2}-1)f^{2}}{2}L_{\alpha}^{f}.
\end{equation}
At $\mathbf{q}=0$, we have
\[
    \left.(\Delta^{f}_{0})^{-1}\right|_{\mathbf{q}=0} = -\frac{1}{f(f+1)},\qquad \left.(\Delta^{f}_{1})^{-1}\right|_{\mathbf{q}=0} = \frac{1}{f(1-(p-1)f)(1-pf)}.
\]
The normalized canonical basis is defined as $\bar{e}^{f}_{\alpha}\coloneqq(\Delta^{f}_{\alpha})^{1/2}e^{f}_{\alpha}$.

\subsection{Givental--Teleman reconstruction}\label{subsec:GT_reconstruction}

Let $\Omega$ be a semisimple CohFT with canonical idempotents $\{e_\beta\}_{\beta=1}^{N}$.
Set $\Delta_\beta\coloneqq\langle e_\beta,e_\beta\rangle^{-1}$ and $\bar{e}_\beta\coloneqq\Delta_\beta^{1/2}e_\beta$, where $\langle\, ,\,\rangle$ is the pairing of $\Omega$.
The Givental--Teleman reconstruction theorem expresses $\Omega$ as a sum over stable graphs~\cite{Giv01Quant,Tel12,PPZ15}. 

Let $\Omega^{\mathrm{pt},N}$ be the direct sum of $N$ one-dimensional trivial CohFTs, written in the normalized canonical basis $\{\bar{e}_{\beta}\}_{\beta=1}^{N}$.
The units of $\Omega^{\mathrm{pt},N}$ and $\Omega$ are, respectively,
\[
\bar{\mathbf{1}}\coloneqq\sum_{\beta=1}^{N}\bar{e}_{\beta},\qquad\mathbf{1}=\sum_{\beta=1}^{N}\Delta_{\beta}^{-1/2}\bar{e}_{\beta}.
\]
The reconstruction then takes the form
\begin{equation}
\Omega=R\cdot T_{R}\cdot\Omega^{\mathrm{pt},N},\qquad T_{R}(z)\coloneqq z\bigl(\bar{\mathbf{1}}-R^{-1}(z)\mathbf{1}\bigr).
\end{equation}

In the graph sum, an ordinary leg labeled by $\alpha_{i}$ carries $R^{-1}(\psi_{i})\alpha_{i}$, while an edge $e$ joining $v_{1}$ and $v_{2}$ carries
\[
\sum_{\beta}\frac{e_{\beta}\otimes e^{\beta}-R^{-1}(\psi_{(e,v_{1})})e_{\beta}\otimes R^{-1}(\psi_{(e,v_{2})})e^{\beta}}{\psi_{(e,v_{1})}+\psi_{(e,v_{2})}}.
\]
Here $\{e_{\beta}\}$ and $\{e^{\beta}\}$ are dual bases. The translation inserts $T_{R}(\psi)$ at $k$ additional marked points, followed by pushforward along the forgetful map and summation over $k\geq 0$ with weight $1/k!$.

Let $\mathsf{G}_{g,n}$ be the set of stable graphs of genus $g$ with $n$ labeled legs.
For $\Gamma\in\mathsf{G}_{g,n}$ and $A_i(z)\in V_f[\![z]\!]$, we denote by
\[
\operatorname{Cont}^{f}_{\Gamma}\bigl[A_1(\psi_1),\ldots,A_n(\psi_n)\bigr]
\]
the integral of the contribution of $\Gamma$ in the Givental--Teleman reconstruction of $\Omega^f$, with leg factors $A_i(\psi_i)$, omitting the automorphism factor $1/|\operatorname{Aut}(\Gamma)|$.

We determine the $R$-matrix $R^{f}$ that reconstructs $\Omega^{f}$ by solving the quantum differential equation (QDE), with the initial condition at $\mathbf{q}=0$ given by quantum Riemann--Roch.
Let
\[
    D\coloneqq -\partial_{t} = \mathbf{Q}\frac{\partial}{\partial\mathbf{Q}} = \frac{1-\mathbf{q}}{1-p^{2}\mathbf{q}}\, \mathbf{q}\frac{\partial}{\partial\mathbf{q}}.
\]
For $i,\alpha \in \{0,1\}$, we write
\[
R^{f}_{i\bar{\alpha}}(z) \coloneqq \left\langle R^{f}(z)^{*} H^{i},\bar{e}_{\alpha}^{f}\right\rangle_{f},\quad (R^{f})_{i}^{\ \alpha}(z) \coloneqq \left\langle R^{f}(z)^{*} H^{i},(e^{f})^{\alpha}\right\rangle_{f}.
\]
In these components, the quantum differential equation takes the form
\[
(zD+L_{\alpha}^{f})R^{f}_{0\bar{\alpha}}=R^{f}_{1\bar{\alpha}}
\]
and
\begin{equation}\label{equ:QDE_R0b}
\left[
\left(zD+L^{f}_{\alpha}\right)^{2}
-\frac{p(2-(p-1)f)\mathbf{q}}{1-p^{2}\mathbf{q}}
\left(zD+L^{f}_{\alpha}\right)
-\frac{f^{2}(1-\mathbf{q})+(2-(p-1)f)^{2}\mathbf{q}}
{4(1-p^{2}\mathbf{q})}
\right]
R^{f}_{0\bar{\alpha}}=0.
\end{equation}
Equivariant localization and quantum Riemann--Roch (cf. \cite{CG07}) give the initial condition
\begin{equation}
    \left.(R^{f})_{0}^{\ \alpha}(z)\right|_{\mathbf{q}=0} = \mathcal{R}^{f}_{\alpha}(z) \coloneqq \exp\left(-\sum_{k>0}\frac{B_{2k}N^{f}_{2k-1,\alpha}}{2k(2k-1)}z^{2k-1}\right),\qquad \alpha=0,1,
\end{equation}
where
\[
\begin{aligned}
    N^{f}_{m,0}&\coloneqq f^{-m}+1+(-1-f)^{-m},\\
    N^{f}_{m,1}&\coloneqq (-f)^{-m}+(1-(p-1)f)^{-m}+(pf-1)^{-m}.
\end{aligned}
\]
These equations and initial values uniquely determine the $R$-matrix $R^{f}(z)$.

\subsection{Open Gromov--Witten invariants and topological vertex}
\label{subsec:open_GW}

Fix the outer brane $\fB$ meeting $C_{\fB}^{\circ}$ and an integral framing $f$.
We define the open GW invariants of $(X_p,\fB,f)$ using formal relative GW theory, following~\cite{LLLZ09,FL13}.
We also recall the topological vertex gluing formula, which provides an effective way to compute these invariants and will be used later to study their dependence on $p$ and $f$.

Following~\cite{LLLZ09,FL13}, we associate to the framed brane $(\fB,f)$ a relative formal toric Calabi--Yau threefold $(\widehat{Y}_{p,f},\widehat{D}_{p,f})$. In the toric graph of $X_p$, we replace the noncompact edge corresponding to $C_{\fB}^{\circ}$ by a compact edge $C_{\fB}\cong\bbP^1$ ending at a univalent vertex. The relative divisor $\widehat{D}_{p,f}$ corresponds to this new vertex, and the other noncompact edges are left unchanged.
For $d\geq 0$ and a nonempty partition $\mu=(\mu_{1}\geq\cdots\geq\mu_{n}>0)$, we consider the effective curve class
\[
    \widetilde{\beta}(d,\mu)\coloneqq d[C_{0}]+|\mu|[C_{\fB}],
\]
where $|\mu|\coloneqq\sum_{i=1}^{n}\mu_{i}$.
The moduli space of genus-$g$ relative stable morphisms with effective class $(\widetilde{\beta}(d,\mu),\mu)$ is denoted by $\Mbar_{g,\widetilde{\beta}(d,\mu),\mu}(\widehat{Y}_{p,f},\widehat{D}_{p,f})$.
It carries a $\bbT'$-equivariant perfect obstruction theory of virtual dimension $n$.
Let $\widehat{L}\subset\widehat{D}_{p,f}$ be the $\bbT'$-invariant divisor specified by the framing. We define the formal relative invariant by
\[
\begin{aligned}
N^{\mathrm{rel}}_{g,d,\mu}
\coloneqq{}&
\frac{1}{|\operatorname{Aut}(\mu)|}
\int_{
[\Mbar_{g,\widetilde{\beta}(d,\mu),\mu}
(\widehat{Y}_{p,f},\widehat{D}_{p,f})^{\bbT'}]^{\mathrm{vir}}}
\frac{
\prod_{i=1}^{\ell(\mu)}\operatorname{ev}_{i}^{*}
c_{1}^{\bbT'}\!\left(
\calO_{\widehat{D}_{p,f}}(\widehat{L})
\right)}
{e_{\bbT'}(N^{\mathrm{vir}})}.
\end{aligned}
\]
Here $\ell(\mu)$ is the length of partition $\mu$, $\operatorname{ev}_{i}$ denotes the evaluation at the $i$-th marked point, and $N^{\mathrm{vir}}$ is the virtual normal bundle of the fixed locus.
According to~\cite[Theorem~4.8]{LLLZ09}, $N^{\mathrm{rel}}_{g,d,\mu}$ is independent of the equivariant parameters.
The associated open GW invariant is defined by
\begin{equation}\label{equ:open_GW_invariant}
    N_{g,d,\mu}^{f} \coloneqq (-1)^{(f+1)|\mu|}N^{\mathrm{rel}}_{g,d,\mu}.
\end{equation}
In particular, by~\cite[Corollary~3.6]{FL13}, we have
\begin{equation}\label{equ:degree_zero_open_disk}
N_{0,0,(w)}^{f} = -\frac{1}{w^{2}}\binom{(f+1)w-1}{w-1},\qquad w\geq 1.
\end{equation}
For $g\geq 0$ and $n\geq 1$, we define the open GW potential by
\[
{\mathcal{F}^{f}_{g,n}}
(\mathbf{Q};X_{1},\ldots,X_{n})
\coloneqq
\sum_{d\geq 0}\sum_{\mu_{1},\ldots,\mu_{n}\geq 1}
|\operatorname{Aut}(\mu)|
(-1)^{(p+1)d}
N_{g,d,\mu}^{f}\,
\mathbf{Q}^{d}\prod_{i=1}^{n}X_{i}^{\mu_{i}}.
\]
Here $\mu$ is the partition with parts $\mu_1,\ldots,\mu_n$.

We next introduce the open-closed partition function.
The partition function of the positive-degree closed invariants is\footnote{Our parameter $\hbar$ is related to
the parameter $\lambda$ in~\cite{LLLZ09} by
$\hbar=\sqrt{-1}\lambda$.}
\[
Z_{\mathrm{cl},>0}\coloneqq
\exp\!\left(
\sum_{g\geq 0}\sum_{d\geq 1}
(-1)^{g-1+(p+1)d}N_{g,d}\,
\hbar^{2g-2}\mathbf{Q}^{d}
\right).
\]

For the open sector, we use power-sum variables $\mathbf{p}=(p_{1},p_{2},\ldots)$ and write $p_{\mu}\coloneqq\prod_{i=1}^{\ell(\mu)}p_{\mu_{i}}$.
The open-closed partition function is then defined by
\[
Z^{f}(\hbar,\mathbf{Q};\mathbf{p})
\coloneqq
Z_{\mathrm{cl},>0}
\exp\!\left(
\sum_{\substack{g,d\geq 0\\ \mu\neq\varnothing}}
(-1)^{g-1+(p+1)d}
N_{g,d,\mu}^{f}\,\mathbf{Q}^{d}
\hbar^{2g-2+\ell(\mu)}p_{\mu}
\right).
\]
The corresponding connected generating function is
\begin{equation}\label{equ:connected_generating_function}
F^{f}(\hbar,\mathbf{Q};\mathbf{p})
\coloneqq\log Z^{f}(\hbar,\mathbf{Q};\mathbf{p}).
\end{equation}
The open potentials are recovered from the connected generating function by
\begin{equation}\label{equ:open_potential_recovery}
\begin{aligned}
\mathcal{F}^{f}_{g,n}(\mathbf{Q};X_{1},\ldots,X_{n})
={}&(-1)^{g-1}\left[\hbar^{2g-2+n}\right]
\left.
\bigg\{
\prod_{i=1}^{n}
\Big(
\sum_{k\geq 1}X_i^k\frac{\partial}{\partial p_k}
\Big)
\bigg\}
F^{f}(\hbar,\mathbf{Q};\mathbf{p})
\right|_{\mathbf{p}=0}.
\end{aligned}
\end{equation}

The topological vertex gluing formula is expressed in terms of Schur functions. The change of basis is
\[
    p_{\mu}=\sum_{\rho\vdash|\mu|}\chi_{\rho}(\mu)s_{\rho}(\mathbf{p}),\qquad s_{\rho}(\mathbf{p})=\sum_{\mu\vdash|\rho|}\frac{\chi_{\rho}(\mu)}{z_{\mu}}p_{\mu}.
\]
Here $z_{\mu}\coloneqq\left(\prod_{i=1}^{\ell(\mu)}\mu_{i}\right)|\operatorname{Aut}(\mu)|$, and $\chi_{\rho}(\mu)$ denotes the character of the irreducible representation of $S_{|\mu|}$ indexed by $\rho$, evaluated on the conjugacy class determined by $\mu$.
We set $\mathsf{q}\coloneqq\ee^{\hbar}$ and
$\boldsymbol{\varrho}\coloneqq(-\tfrac12,-\tfrac32,\ldots)$.
The vertex contributions are expressed in terms of the functions $W_{\rho}$ and $W_{\rho,\nu}$, given by
\begin{equation}\label{equ:one_two_partition_W}
W_{\rho}(\mathsf{q})
\coloneqq s_{\rho}(\mathsf{q}^{\boldsymbol{\varrho}}),
\qquad
W_{\rho,\nu}(\mathsf{q})
\coloneqq
W_{\rho}(\mathsf{q})\,
s_{\nu}(\mathsf{q}^{\rho+\boldsymbol{\varrho}}),
\end{equation}
where $\mathsf{q}^{\rho+\boldsymbol{\varrho}} = (\mathsf{q}^{\rho_{1}-\frac{1}{2}},\mathsf{q}^{\rho_{2}-\frac{3}{2}},\ldots)$.

We label the compact edge of the formal toric Calabi--Yau graph of $X_p$ by $\nu$ and all external legs by the empty partition.
The gluing formula \cite[Theorem~7.5 and Corollary~7.6]{LLLZ09}, together with the vertex evaluation \cite[Corollary~8.8]{LLLZ09}, gives
\begin{equation}\label{equ:closed_sector_of_open_vertex}
Z_{\mathrm{cl},>0} =
\sum_{\nu}
\mathbf{Q}^{|\nu|}
\mathsf{q}^{p\kappa_{\nu}/2}
W_{\nu}(\mathsf{q})W_{\nu^{t}}(\mathsf{q}),
\end{equation}
where $\kappa_{\nu}\coloneqq\sum_{i}\nu_{i}(\nu_{i}-2i+1)$ and $\nu^{t}$ denotes the transpose of $\nu$.
Labeling the external leg corresponding to $C_{\mathfrak B}^{\circ}$ by $\rho$ instead gives
\begin{equation}\label{equ:open_vertex_gluing}
Z^{f}
=
\sum_{\rho,\nu}
\mathbf{Q}^{|\nu|}
\mathsf{q}^{(f\kappa_{\rho}+p\kappa_{\nu})/2}
W_{\rho,\nu}(\mathsf{q})
W_{\nu^{t}}(\mathsf{q})
s_{\rho}(\mathbf{p}).
\end{equation}

\subsection{Graph sums for open potentials}
\label{subsec:open_leaves}

By~\eqref{equ:degree_zero_open_disk}, the degree-zero disk potential is
\[
\sum_{w\geq 1}N_{0,0,(w)}^{f}X^{w}.
\]
{
Using the polynomial expression in~\eqref{equ:degree_zero_open_disk}, we write the $\bbT'$-equivariant disk factor as
}
\begin{equation}\label{equ:full_torus_disk_specialization}
-\frac{1}{(w-1)!\,\sfu^{w+1}}
\prod_{a=1}^{w-1}(w\sfv+a\sfu)
=\left.\frac{w^{2}}{\sfu^{2}}N_{0,0,(w)}^{f}\right|_{f=\sfv/\sfu}.
\end{equation}

For $g\geq 0$ and $n>0$, we define the series
\begin{equation}\label{equ:full_torus_open_localization}
\begin{aligned}
\mathcal{F}^{\bbT'}_{g,n}
(\mathbf{Q};X_{1},\ldots,X_{n};\sfu,\sfv)
\coloneqq{}&
\sum_{\substack{d\geq 0\\ d>0\ \mathrm{or}\ 2g-2+n>0}}
\sum_{\mu_{1},\ldots,\mu_{n}\geq 1}
(-1)^{(p+1)d}\mathbf{Q}^{d}\\
&\quad\cdot
\int_{[\Mbar_{g,n}(\bbP^{1},d)]^{\mathrm{vir}}_{\bbT'}}
e_{\bbT'}^{-1}\left(\mathrm{R}^{\!\bullet}\pi_{*}F^{*}E_{p}\right)
\prod_{i=1}^{n}
\frac{\operatorname{ev}_{i}^{*}\Phi_{0}}
{1-(\mu_{i}/\sfu)\widehat{\psi}_{i}}\\
&\quad\cdot
\prod_{i=1}^{n}\left(
\frac{\mu_{i}^{2}X_{i}^{\mu_{i}}}{\sfu^{2}}
\left.N_{0,0,(\mu_i)}^{f}\right|_{f=\sfv/\sfu}
\right).
\end{aligned}
\end{equation}
Here $\Phi_0$ is the product of $e_{\mathbb T'}(E_p)$ and the $\mathbb T'$-equivariant Poincaré dual of $x_0$ with respect to the untwisted pairing.
The class $\widehat\psi_i$ is the cotangent line class at the $i$-th marked point on the moduli space of stable maps.

We specialize to $\mathsf v=f\mathsf u$ and then $\mathsf u=1$. For $(g,n)=(0,1),(0,2)$, the specialized series in the localization identity below is understood to include the degree-zero contributions prescribed by the unstable conventions of~\cite[Section~6.1]{LLLZ09}.

We apply the localization calculation in the proof of~\cite[Proposition~3.1]{FL13} to the relative formal toric Calabi--Yau threefold associated with the chosen framed outer brane in $X_p$.
The relevant fixed loci have projective coarse moduli spaces~\cite[Section~4.4]{LLLZ09}, so semiprojectivity is not required for this calculation.
Since the $\bbT'$-fixed stable maps to $X_p$ have image contained in the zero section, the corresponding descendant integrals agree with the twisted integrals in~\eqref{equ:full_torus_open_localization}.
The same argument as in the derivation of~\cite[Corollary~3.3]{FL13} shows that the specialization $\sfv=f\sfu$ is well-defined coefficientwise for every integral framing $f$.
The specialization is taken coefficientwise after summing all localization contributions. With these conventions, the localization calculation gives
\begin{equation}\label{equ:formal_relative_localization}
\left.
\left(
\left.\mathcal{F}^{\bbT'}_{g,n}
(\mathbf{Q};X_1,\ldots,X_n;\sfu,\sfv)
\right|_{\sfv=f\sfu}
\right)
\right|_{\sfu=1}
=
\mathcal{F}^{f}_{g,n}(\mathbf{Q};X_1,\ldots,X_n).
\end{equation}

To obtain the graph sum, we apply the ancestor--descendant relation using the $S$-operator of the $\bbT'$-equivariant CohFT $\Omega^{\bbT'}$ (cf.\ \cite[Appendix~2]{CG07}).
For $a,b\in V_{\bbT'}$, it is defined by
\[
\bigl\langle a,S^{\bbT'}(z)b\bigr\rangle_{\bbT'}
=\langle a,b\rangle_{\bbT'}
+\sum_{d\geq 1}\ee^{-dt}
\left\langle a,\frac{b}{z-\widehat{\psi}}\right\rangle^{\mathrm{tw}}_{0,2,d},
\]
where the brackets on the right denote descendant correlators of the $\bbT'$-equivariant twisted theory defined above.

For a nonexceptional integral framing $f$, we denote by $\Phi_{0}^{f}$ and $S^{f}(z)$ the specializations of $\Phi_{0}$ and $S^{\bbT'}(z)$ at $\sfv=f\sfu$ and $\sfu=1$.
Applying the ancestor--descendant relation and summing over winding
numbers gives the ancestor insertion
\begin{equation}\label{equ:A_model_open_input_series}
\mathcal{U}_{\rA}^{f}(\mathbf{Q};X,z) \coloneqq
\left(
    \sum_{w\geq 1}
    \frac{w^{2}N_{0,0,(w)}^{f}X^{w}}{1-wz}
    S^{f}(z)\Phi_{0}^{f}
\right)_{+},
\end{equation}
where $(\cdot)_{+}$ denotes the part with nonnegative powers of $z$.
Equivalently,
\begin{equation}\label{equ:A_model_open_input_expansion}
\mathcal{U}_{\rA}^{f}(\mathbf{Q};X,z)
=\sum_{w\geq 1}\frac{V_{w}^{f}X^{w}}{1-wz},
\qquad
V_{w}^{f}\coloneqq w^{2}N_{0,0,(w)}^{f}S^{f}(1/w)\Phi_{0}^{f}.
\end{equation}
The A-model leg contribution is
\[
    \mathcal{L}_{\rA}^{f}(\mathbf{Q};X,z) \coloneqq R^{f}(z)^{-1} \mathcal{U}_{\rA}^{f}(\mathbf{Q};X,z).
\]

\begin{proposition}
\label{prop:formal_relative_open_graph_sum}
Let $n>0$, $2g-2+n>0$, and let $f$ be a nonexceptional integral framing.
Then
\begin{equation}
    {\mathcal{F}^{f}_{g,n}}(\mathbf{Q};X_{1},\ldots,X_{n}) =
\sum_{\Gamma\in\mathsf{G}_{g,n}}
\frac{1}{|\operatorname{Aut}(\Gamma)|}
\operatorname{Cont}^{f}_{\Gamma}
\left[
\left(
\mathcal{L}_{\rA}^{f}
(\mathbf{Q};X_{i},\psi_{i})
\right)_{i=1}^{n}
\right].
\end{equation}
\end{proposition}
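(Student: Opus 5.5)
The plan is to start from the localization identity \eqref{equ:formal_relative_localization}. It expresses $\mathcal{F}^{f}_{g,n}$ as the specialization $\sfv=f\sfu$, $\sfu=1$ of the twisted descendant series $\mathcal{F}^{\bbT'}_{g,n}$ in \eqref{equ:full_torus_open_localization}. We then rewrite that series through ancestors and apply Givental--Teleman. It is convenient to work $\bbT'$-equivariantly first, where everything is defined over $\bbC(\sfu,\sfv)$, and to specialize only at the end. The nonexceptional hypothesis on $f$ ensures that the pairing $\langle\,,\,\rangle_f$ is nondegenerate, and so that the $S$- and $R$-operators specialize. For each fixed $\mu_i$, we expand $\frac{\operatorname{ev}_i^*\Phi_0}{1-(\mu_i/\sfu)\widehat\psi_i}=\sum_k(\mu_i/\sfu)^k\widehat\psi_i^k\operatorname{ev}_i^*\Phi_0$. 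This writes $\mathcal{F}^{\bbT'}_{g,n}$ as a generating series of twisted descendant correlators with the prescribed disk weights $\frac{\mu_i^2X_i^{\mu_i}}{\sfu^2}N^{f}_{0,0,(\mu_i)}|_{f=\sfv/\sfu}$.

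Next we apply the ancestor--descendant relation of Kontsevich--Manin and Givental, in the form of \cite[Appendix~2]{CG07}, for the semisimple twisted theory. In the stable range $2g-2+n>0$, descendant correlators equal ancestor correlators of $\Omega^{\bbT'}$ with each insertion $\bar\psi^k\alpha$ replaced by $[S^{\bbT'}(z)^{-1}$-transformed$]$, i.e.\ by the nonnegative part $(z^{-?}\,\cdot)$. Concretely, a descendant insertion $\sum_k a_k\widehat\psi^k\Phi_0$ with generating series $\frac{1}{1-wz}$ in $z=\widehat\psi$ becomes the ancestor insertion $\bigl(\frac{1}{1-wz}S(z)\Phi_0\bigr)_+$, using the convention for $S$ fixed in the text. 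Since $\frac{1}{1-wz}=\sum_k w^kz^k$ and $S(z)=\sum_m S_mz^{-m}$, the positive part equals $\frac{S(1/w)\Phi_0}{1-wz}$ up to the truncation identity $\bigl(\frac{z^{-m}}{1-wz}\bigr)_+=\frac{w^m}{1-wz}$. This yields exactly \eqref{equ:A_model_open_input_series} and its expansion \eqref{equ:A_model_open_input_expansion}, after summing over $w$ and specializing. The dilaton-type shift in the ancestor--descendant relation contributes nothing, because there are no closed descendant insertions beyond the open legs. The degree-zero unstable pieces are excluded because $2g-2+n>0$.

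Finally, we apply Givental--Teleman reconstruction $\Omega^f=R^f\cdot T_{R^f}\cdot\Omega^{\mathrm{pt},2}$ to the ancestor CohFT, integrated over $\Mbar_{g,n}$. By the description of the graph sum in Section~\ref{subsec:GT_reconstruction}, each leg with insertion $A(\psi)$ receives $R^f(\psi)^{-1}A(\psi)$. With $A=\mathcal{U}^f_{\rA}$ this gives $\mathcal{L}^f_{\rA}$, and the edge and translation factors are those encoded in $\operatorname{Cont}^f_\Gamma$. The $R$-matrix here is the one characterized by the QDE and the quantum Riemann--Roch initial condition, which agrees with Teleman's $R$ by uniqueness.

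The main obstacle is the specialization step. The ancestor--descendant and reconstruction identities hold over $\bbC(\sfu,\sfv)$, but we must show that specializing at $\sfv=f\sfu$ commutes with all of them coefficientwise in $\mathbf{Q}$ and $X_i$. The plan for this is as follows. For fixed $(d,\mu)$ only finitely many graphs and terms contribute, and each is a rational function whose denominators come from the pairing, $\Delta_\alpha$, and $R^{-1}$. At $\mathbf{q}=0$ these are products of $f$, $f+1$, $1-f(p-1)$ and $1-fp$ by the explicit formulas above, so all of them are nonzero at nonexceptional $f$. The left side specializes by \eqref{equ:formal_relative_localization}, and both sides are then rational functions in $\sfv/\sfu$ that are regular at $f$, so equality persists under specialization.
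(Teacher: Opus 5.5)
Your proposal is correct and follows the paper's proof: you specialize the descendant series via \eqref{equ:formal_relative_localization}, pass to ancestors with $S^{\bbT'}$, and apply Givental--Teleman so that each leg carries $R^{f}(z)^{-1}\mathcal{U}^{f}_{\rA}=\mathcal{L}^{f}_{\rA}$. Your identity $\bigl(z^{-m}/(1-wz)\bigr)_{+}=w^{m}/(1-wz)$ is exactly what yields $\mathcal{U}^{f}_{\rA}$, so the garbled ``$S^{-1}$-transformed'' phrase should simply read $[S^{\bbT'}(z)\,\cdot\,]_{+}$. Your specialization paragraph is more explicit than the paper, but its list of denominators omits $S^{\bbT'}(\sfu/w)\Phi_{0}$: its $\mathbf{Q}$-coefficients a priori carry denominators $d\sfu+w\sfv$, which vanish at some nonexceptional integers (e.g.\ $f=-2$ with $d=2w$), so their regularity does not follow from nonexceptionality alone; the paper simply takes the specializations $S^{f}$, $\Phi^{f}_{0}$ (hence $V^{f}_{w}$) as given, so this does not separate your argument from its proof.
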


\begin{proof}
{By~\eqref{equ:formal_relative_localization}}, the open potential ${\mathcal{F}^{f}_{g,n}}$ is the specialization of the descendant series \eqref{equ:full_torus_open_localization}.
Applying the ancestor--descendant relation and Givental--Teleman reconstruction to the CohFT $\Omega^{\bbT'}$ expresses this series as a sum over stable graphs.
After specialization to $\sfv=f\sfu$ and $\sfu=1$, the leg contributions become $R^{f}(z)^{-1}\mathcal{U}_{\rA}^{f}=\mathcal{L}_{\rA}^{f}$, giving the stated formula.
\end{proof}
\section{Topological recursion and open-closed mirror symmetry}
\label{sec:topological_recursion_mirror}

In this section, we first recall the relation between topological recursion of spectral curves and semisimple CohFTs.
We then prove closed and open mirror symmetry for $X_{p}$ by comparing the A-model and B-model graph sums.

\subsection{Topological recursion and cohomological field theories}
\label{subsec:TR_CohFT}
The topological recursion, introduced by Eynard--Orantin \cite{EO07} in the context of matrix models, produces multidifferentials from spectral curve data. 
By definition, the spectral curve data is given by
    \(
    \mathcal{C}\coloneqq(\Sigma,x,y), 
    \)
    where $\Sigma$ is a Riemann surface, $x,y$ are two functions on $\Sigma$ such that $\rd x$ is meromorphic and the zeros $\{z^{\beta}\}_{\beta=1}^{N}$ of $\rd x$ are simple, and $\rd y$ is holomorphic and nonzero near each $z^{\beta}$.
    In the present paper, we focus on genus-0 case, i.e., $\Sigma=\mathbb P^1$. 
    The Bergman kernel of $\mathbb P^1$ is given by
    \[
    B(z_{1},z_{2})\coloneqq\frac{\rd z_{1}\rd z_{2}}{(z_{1}-z_{2})^{2}}.
    \]
    
Near each critical point $z^{\beta}$, we denote by $\bar z$ the local involution of $z$ such that $x(\bar{z})=x(z)$.
The corresponding recursion kernel is
\[ K_{\beta}(z_{0},z) \coloneqq \frac{\int_{z'=\bar{z}}^{z} B(z_{0},z')}{2\left( y(z) - y(\bar{z}) \right) \rd x(z)}, \]
and the initial data are
\[
\omega_{0,1}(z)\coloneqq y(z)\,\rd x(z),
\qquad
\omega_{0,2}(z_{1},z_{2})\coloneqq B(z_{1},z_{2}).
\]
For $2g-2+n+1>0$, the recursion defines
$\omega_{g,n+1}(z_{0},z_{1},\ldots,z_{n})$ by
\begin{equation}\label{equ:EO_recursion}
\begin{aligned}
    \omega_{g,n+1}(z_{0},z_{[n]}) \coloneqq & \sum_{\beta} \operatorname{Res}_{z=z^{\beta}} K_{\beta}(z_{0},z)\bigg(\omega_{g-1,n+2}(z,\bar{z},z_{[n]}) \\
    &\qquad \qquad +\sum^{\prime}_{\substack{g_{1}+g_{2}=g\\ I\sqcup J=[n]}}\omega_{g_{1},|I|+1}(z,z_{I}) \omega_{g_{2},|J|+1}(\bar{z},z_{J})\bigg).
\end{aligned}
\end{equation}
Here $[n]\coloneqq\{1,\ldots,n\}$, $z_{I}\coloneqq(z_{i})_{i\in I}$, and the symbol $\sum\limits^{\prime}$ means we exclude $\omega_{0,1}$ in the summation.
For $g\geq 2$, the genus-$g$ free energy is
\begin{equation} \omega_{g,0} \coloneqq \frac{1}{2-2g} \sum_{\beta} \operatorname{Res}_{z=z^{\beta}} \omega_{g,1}(z) \int_{z'=z^{\beta}}^{z} \omega_{0,1}(z'). 
\end{equation}

We next recall how a semisimple CohFT is constructed from the spectral curve data $\mathcal{C} = (\bbP^{1},x,y,B)$.
Around each critical point $z^{\beta}$, we define the {local Airy coordinates} $\eta^{\beta} = \eta^{\beta}(z)$ by
\[ x(z) = x^{\beta} + \frac{1}{2}\eta^{\beta}(z)^{2}, \]
where $x^{\beta} \coloneqq x(z^{\beta})$.
Let
\[ \rd\zeta^{\bar{\beta}}(z) \coloneqq -\operatorname{Res}_{z'=z^{\beta}} \frac{B(z',z)}{\eta^{\beta}(z')}. \]
The state space associated with $\mathcal{C}$ is $V \coloneqq \operatorname{span}\{ \bar{e}_{\beta} \}_{\beta=1}^{N}$, with pairing $\eta(\bar{e}_{\beta}, \bar{e}_{\gamma}) \coloneqq \delta_{\beta,\gamma}$.
The $R$-matrix and $T$-vector associated with $\mathcal{C}$ are defined by
\begin{equation}\label{equ:Rmat_TR}
    \frac{\fu}{\sqrt{2\pi\mathfrak{u}}} \int_{\mathfrak{L}_{\gamma}} \ee^{-x(z)/\mathfrak{u}} \rd \zeta^{\bar{\beta}}(z) \asymp \ee^{-x^{\gamma}/\mathfrak{u}} \eta(\bar{e}_{\gamma}, R^{*}(-\mathfrak{u})\bar{e}_{\beta} ),
\end{equation}
and
\[ \frac{\fu}{\sqrt{2\pi\mathfrak{u}}} \int_{\mathfrak{L}_{\gamma}} \ee^{-x(z)/\mathfrak{u}} \rd y(z) \asymp \ee^{-x^{\gamma}/\mathfrak{u}} \left( \mathfrak{u} - \eta( \bar{e}_{\gamma}, T(\mathfrak{u}) ) \right). \]
Here $R^{*}$ denotes the adjoint with respect to $\eta$, and $\mathfrak{L}_{\gamma}$ is the Lefschetz thimble through $z^{\gamma}$, along which $x(z)-x^\gamma\in\mathbb R_{\geq0}$.
The $R$-matrix defined above satisfies the symplectic condition~\cite{Eyn14}.
Using the $R$-matrix action and translation described in Section~\ref{subsec:GT_reconstruction}, we define the associated CohFT by
    \[ \Omega^{\mathcal{C}} \coloneqq R\cdot T\cdot\Omega^{\mathrm{pt},N}. \]

For $k\geq 0$, we define
\[ \rd \zeta^{\bar{\beta}}_{k} \coloneqq \left( -\, \rd \circ \frac{1}{\rd x} \right)^{k} \rd\zeta^{\bar{\beta}}. \]
These are globally defined meromorphic $1$-forms on $\bbP^{1}$ with poles at the critical points.
By~\cite{DOSS14,Eyn14}, for $2g-2+n > 0$, we have
\begin{equation}\label{equ:TR_CohFT_open_leaves}
    \omega_{g,n}(z_{1},\ldots,z_{n})
    =\sum_{\substack{k_{1},\ldots,k_{n}\geq 0\\
    \beta_{1},\ldots,\beta_{n}\in[N]}}
    \left(
    \int_{\Mbar_{g,n}}
    \Omega^{\mathcal{C}}_{g,n}(\bar{e}_{\beta_{1}},\ldots,\bar{e}_{\beta_{n}})
    \prod_{i=1}^{n}\psi_{i}^{k_{i}}
    \right)
    \prod_{i=1}^{n}\rd\zeta_{k_{i}}^{\bar{\beta}_{i}}(z_{i}).
\end{equation}
In particular, for $n=0$ and $g\geq 2$, this gives
\begin{equation}\label{equ:TR_CohFT_closed}
\omega_{g,0}=\int_{\Mbar_{g}}\Omega^{\mathcal{C}}_{g,0}.
\end{equation}

\subsection{The spectral curve for \texorpdfstring{$X_{p}$}{Xp}}
\label{subsec:calibrated_spectral_curve_Xp}

We consider the spectral curve
\[
\mathcal{C}_{f}\coloneqq(\bbP^{1},x_{f}(z),y(z)),
\]
where
\begin{equation}\label{equ:SpecCurv}
    \begin{aligned}
        x_{f}(z)
        &\coloneqq\log z-(f+1)\log(1-z)+(pf-1)\log(1-\mathbf{q}z),\\
        y(z)
        &\coloneqq -\log(1-z)+p\log(1-\mathbf{q}z).
    \end{aligned}
\end{equation}
Clearly, $x_f(z)=x_0(z)+f y(z)$.
\begin{remark}
{
    For comparison, let $(\widetilde{x},\widetilde{y})$ denote Eynard's logarithmic spectral functions~\cite[Equation~(3.115)]{Eyn08}.
    Identifying Eynard's parameters $p$ and $z_0$ with
    our $p+1$ and $\mathbf{q}^{-1/2}$, respectively, we have
    \[\textstyle
    \widetilde{x}(\frac{1}{\sqrt{\mathbf{q}}z})=-x_0(z)+c_1,\qquad
    \widetilde{y}(\frac{1}{\sqrt{\mathbf{q}}z})=-y(z)-\frac{p-1}{2}x_0(z)+c_2,
    \]
    where $c_1,c_2$ are independent of $z$.
    Our choice of spectral functions is motivated by Zhou's emergent geometry of the KP hierarchy~\cite{Zhou24}.
}
\end{remark}

To determine the critical points of $x_{f}$, we differentiate \eqref{equ:SpecCurv} and obtain
\begin{equation}\label{equ:xf_prime_Nf}
x_{f}'(z)=\frac{N_{f}(z)}{z(1-z)(1-\mathbf{q}z)},
\end{equation}
where
\begin{equation}\label{equ:gammaf}
N_{f}(z)\coloneqq
1+f(1-p\mathbf{q})z+\mathbf{q} \gamma_{f}z^{2},
\qquad
\gamma_{f}\coloneqq f(p-1)-1.
\end{equation}
The two critical points are\footnote{The square root here is chosen to be $f+O(\mathbf{q})$.}
\[
    a_{\alpha} =
    \frac{-f(1-p\mathbf{q})+(-1)^{\alpha}\sqrt{f^{2}(1-p\mathbf{q})^{2}-4\mathbf{q} \gamma_{f}}}
    {2\mathbf{q} \gamma_{f}},
    \qquad \alpha=0,1.
\]
As $\mathbf{q}\to 0$, the critical points satisfy
\[
a_{0}=-\frac{1}{f}+O(\mathbf{q}),\qquad
\mathbf{q} a_{1}=-\frac{f}{\gamma_{f}}+O(\mathbf{q}).
\]
We will therefore use $w\coloneqq\mathbf{q}z$ near $a_{1}$.
Note that the Bergman kernel has the same expression in this coordinate.

To relate the critical points to the eigenvalues of the quantum product \eqref{equ:quantum_product_H}, we compute
\[
D x_{f}(z)
=
\frac{\mathbf{q}(1-\mathbf{q})(1-pf)z}
{(1-p^{2}\mathbf{q})(1-\mathbf{q}z)},
\]
and set
\[
K_{f}\coloneqq\frac{f}{2}+\frac{(p+1)\mathbf{q}}{1-p^{2}\mathbf{q}}.
\]
Using $N_{f}(a_{\alpha})=0$, we find that $K_{f}-Dx_{f}(a_{\alpha})$ satisfies the characteristic equation of \eqref{equ:quantum_product_H}, and its limit as $\mathbf{q}\to0$ is $(-1)^{\alpha}f/2$, so
\begin{equation}\label{equ:eigenvalue_critical_point_Xp}
L_{\alpha}^{f}=K_{f}-Dx_{f}(a_{\alpha}).
\end{equation}
Substituting~\eqref{equ:eigenvalue_critical_point_Xp} into the expression for $\Delta_{\alpha}^{f}$ in Section~\ref{sec:formal_open_closed} gives
\[ \frac{y'(a_{\alpha})^{2}}{x_{f}''(a_{\alpha})}
=(\Delta_{\alpha}^{f})^{-1}. \]
Using the same square roots $(\Delta_\alpha^f)^{1/2}$ as in the definition of $\bar e_\alpha^f$, we choose $\sqrt{x_f''(a_\alpha)}$ so that
\begin{equation}\label{equ:Airy_sign_Xp}
\frac{y'(a_{\alpha})}{\sqrt{x_{f}''(a_{\alpha})}}
=(\Delta_{\alpha}^{f})^{-1/2}.
\end{equation}
This fixes the sign of the local Airy coordinate $\eta^{\alpha}$ by requiring
\[
(\eta^{\alpha})'(a_{\alpha})
=\sqrt{x_{f}''(a_{\alpha})}.
\]
The definition of $\rd\zeta^{\bar{\alpha}}$ then gives
\begin{equation}\label{equ:basic_dzeta_Xp}
\rd\zeta^{\bar{\alpha}}(z)
=\frac{1}{\sqrt{x_{f}''(a_{\alpha})}}
\rd\frac{1}{z-a_{\alpha}}.
\end{equation}

Let $V_{\rB}$ be the state space associated with $\mathcal{C}_{f}$, with normalized canonical basis $\{\bar{e}_{\alpha}^{\rB}\}_{\alpha=0,1}$.
Its canonical idempotents are
\[
e_{\alpha}^{\rB}
\coloneqq(\Delta_{\alpha}^{f})^{-1/2}\bar{e}_{\alpha}^{\rB},\qquad \alpha = 0,1.
\]
The map
\begin{equation}\label{equ:state_space_identification}
\iota_{f}:V_{\rB}\longrightarrow V_{f},
\qquad
\iota_{f}(\bar{e}_{\alpha}^{\rB})\coloneqq\bar{e}_{\alpha}^{f}
\end{equation}
preserves the unit, product, and pairing.
We henceforth identify $V_{\rB}$ with $V_{f}$ via $\iota_{f}$.
In particular,
\[
\sum_{\alpha=0}^{1}e_{\alpha}^{\rB}=\mathbf{1},
\qquad
\sum_{\alpha=0}^{1}L_{\alpha}^{f}e_{\alpha}^{\rB}=H.
\]

Let $\check{R}^{f}$ and $\check{T}$ denote the $R$-matrix and $T$-vector associated with $\mathcal{C}_{f}$, respectively.
To verify the flat unit condition, we define
\begin{equation}\label{equ:flat_unit_partial_fraction}
\varphi_{0}(z)
\coloneqq\sum_{\alpha}
\frac{y'(a_{\alpha})}{x_{f}''(a_{\alpha})}\frac{1}{z-a_{\alpha}}.
\end{equation}
Equations \eqref{equ:Airy_sign_Xp} and \eqref{equ:basic_dzeta_Xp} then imply
\[
\rd\varphi_{0}=\sum_{\alpha}(\Delta_{\alpha}^{f})^{-1/2}\rd\zeta^{\bar{\alpha}}.
\]
Comparing the residues at the two zeros of $N_{f}$ and the value at infinity gives
\begin{equation}\label{equ:varphi0_curve}
\varphi_{0}(z)
=\frac{(1-p)+(p\mathbf{q}-1)z}{\gamma_{f}N_{f}(z)}
=\frac{y'(z)}{x_{f}'(z)}-\frac{p-1}{\gamma_{f}}.
\end{equation}
Integration by parts then yields
\[
\int_{\mathfrak{L}_{\gamma}}\ee^{-x_{f}/\fu}\rd y
=\fu\int_{\mathfrak{L}_{\gamma}}
\ee^{-x_{f}/\fu}\rd\varphi_{0}.
\]
Substituting this identity into the definitions of $\check{R}^f$ and $\check T$ and using the symplectic condition, we obtain
\begin{equation}\label{equ:B_translation_Xp}
\check{T}(\fu)
=\fu\left(\bar{\mathbf{1}}
-(\check{R}^{f})^{-1}(\fu)\mathbf{1}\right).
\end{equation}

\subsection{Closed mirror symmetry}
\label{subsec:R_identification_Xp}

We identify the two $R$-matrices by comparing their quantum differential equations and initial values at $\mathbf{q}=0$, and then deduce closed mirror symmetry.

\begin{proposition}\label{prop:identification_R}
The A-model and B-model $R$-matrices satisfy
\[ R^{f}(z)=\check{R}^{f}(-z). \]
\end{proposition}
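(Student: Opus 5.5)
The plan is to show that both sides satisfy the same quantum differential equation in $\mathbf{q}$ and agree at $\mathbf{q}=0$. The paper has already noted that the QDE \eqref{equ:QDE_R0b}, together with the initial values $\mathcal{R}^{f}_{\alpha}(z)$, determines $R^{f}(z)$ uniquely.

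\textbf{Step 1: the B-model QDE.} Set
\[
I_{\gamma}^{\bar\beta}(\fu)\coloneqq\frac{\fu}{\sqrt{2\pi\fu}}\int_{\mathfrak{L}_{\gamma}}\ee^{-x_f(z)/\fu}\,\rd\zeta^{\bar\beta}(z).
\]
The idea is to derive the differential equation satisfied by the oscillatory integrals
\[
J_\gamma(\fu)\coloneqq\frac{\fu}{\sqrt{2\pi\fu}}\int_{\mathfrak{L}_\gamma}\ee^{-x_f/\fu}\,\rd\varphi_0=\sum_\beta(\Delta^f_\beta)^{-1/2}I^{\bar\beta}_\gamma
\]
and of their $D$-derivatives.

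The key observation is that $D$ acts on $\ee^{-x_f/\fu}$ by multiplication by $-Dx_f/\fu$. Here
\[
Dx_f(z)=\frac{\mathbf{q}(1-\mathbf{q})(1-pf)z}{(1-p^2\mathbf{q})(1-\mathbf{q}z)}
\]
is an explicit rational function of $z$. Using \eqref{equ:eigenvalue_critical_point_Xp}, $L^f_\alpha=K_f-Dx_f(a_\alpha)$, the operator $-\fu D$ applied to $\ee^{x^\gamma/\fu}J_\gamma$ should produce $K_f-Dx_f(z)$ inserted into the integrand, which plays the role of $H\ast_f$.

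So I would:
\begin{enumerate}
\item write $(K_f-Dx_f(z))\,\rd\varphi_0$ modulo exact forms $\rd(g)-\fu^{-1}g\,\rd x_f$ (integration by parts) as a combination of $\rd\varphi_0$ and a second form $\rd\varphi_1$;
\item check that the resulting $2\times2$ system in the basis $\{\rd\varphi_0,\rd\varphi_1\}$ is exactly $(-\fu D)\Psi=\Psi\,(H\ast_f)$ plus the $\fu$-correction coming from $\partial_{\mathbf q}$ of $\rd\varphi_0$ itself.
\end{enumerate}
Since the integrand is a rational function times $z^{\bullet}(1-z)^{\bullet}(1-\mathbf{q}z)^{\bullet}$, this is a finite computation of the kind that yields hypergeometric-type Picard--Fuchs equations. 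After substituting $\fu=-z$, the outcome should be that the rows of $\ee^{x^\gamma/\fu}I_\gamma^{\bar\beta}$ satisfy \eqref{equ:QDE_R0b} and the first-order relation $(zD+L_\alpha^f)R_{0\bar\alpha}=R_{1\bar\alpha}$.

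\textbf{Step 2: asymptotic expansion.} Expanding via \eqref{equ:Rmat_TR} and stationary phase, $\check R^{f}(-z)$ then satisfies the same QDE, component by component, as $R^f(z)$. The sign flip arises because $\fu$ appears as $-\fu$ in \eqref{equ:Rmat_TR}. The leading terms match because the eigenvalues agree via \eqref{equ:eigenvalue_critical_point_Xp}, and the normalizations agree via \eqref{equ:Airy_sign_Xp}.

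\textbf{Step 3: initial values at $\mathbf{q}=0$.} At $\mathbf{q}=0$:
\begin{itemize}
\item Near $a_0$, the curve degenerates to $x_f=\log z-(f+1)\log(1-z)$ (with $\mathbf{q}z\to0$). The thimble integral is a Beta integral, and Stirling's expansion gives $\exp\bigl(-\sum_k\frac{B_{2k}}{2k(2k-1)}N^f_{2k-1,0}(-\fu)^{2k-1}\bigr)$, from the three Gamma factors with arguments proportional to $f^{-1}$, $1$ and $(-1-f)^{-1}$.
\item Near $a_1$, in the coordinate $w=\mathbf{q}z$, the curve becomes $\log w-f\log(-w)\dots+(pf-1)\log(1-w)$ up to constants. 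This again gives a Beta integral, yielding the exponents $(-f)^{-m}$, $(1-(p-1)f)^{-m}$ and $(pf-1)^{-m}$.
\end{itemize}
Off-diagonal entries vanish at $\mathbf{q}=0$ because the two critical points separate to $z$ and $w$ charts. Comparing with $\mathcal{R}^f_\alpha(z)$ and invoking uniqueness gives $R^f(z)=\check R^f(-z)$.

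\textbf{Main obstacle.} I expect Step 1 to be the hardest: finding the correct reduction of $Dx_f\,\rd\varphi_0$ by exact forms so that the Gauss--Manin system exactly reproduces \eqref{equ:QDE_R0b}, including the constant shift $K_f$ and the term $-(p-1)/\gamma_f$ in \eqref{equ:varphi0_curve}. I would first try taking $\rd\varphi_1\coloneqq-\fu\,\rd(\varphi_0\cdot)\,$ built from $\frac{y'}{x_f'}\cdot Dx_f$ and verifying the system directly. A secondary concern is the $\mathbf{q}\to0$ limit of the thimble near $a_1$, which requires the rescaled coordinate $w$.
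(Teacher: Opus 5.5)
Your proposal follows the paper's proof essentially step for step. You derive a Gauss--Manin system for the thimble integrals of $\rd\varphi_0$ and $\rd\varphi_1$ by integration by parts together with $D[x_f(a_\gamma)]=K_f-L^f_\gamma$, obtain the initial values at $\mathbf{q}=0$ from Beta integrals in the $z$ and $w=\mathbf{q}z$ charts via Stirling, and conclude by uniqueness. The only points to fix are that the second form must be the $\fu$-independent $H$-component $\rd\varphi_1=\sum_\alpha L^f_\alpha(\Delta^f_\alpha)^{-1/2}\rd\zeta^{\bar\alpha}$ (the paper verifies $(D\varphi_0)\rd x_f-(Dx_f)\rd\varphi_0=-K_f\rd\varphi_0+\rd\varphi_1$ as an exact rational identity), not your tentative $\fu$-dependent guess, and that the normalized Beta integral, which represents $\check R^f(-\fu)$, expands with $\fu^{2k-1}$ rather than $(-\fu)^{2k-1}$; this expansion is computed in a real range of $f$ where the integral converges and then extended by rationality in $f$.
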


\begin{proof}
By the characterization in Section~\ref{subsec:GT_reconstruction}, 
it suffices to show that, under the identification\eqref{equ:state_space_identification}, 
$\check R^f(-z)$ satisfies the same quantum differential equations and initial conditions as $R^f(z)$.

We first verify the quantum differential equations.
To compute the pairing with $H$ from \eqref{equ:Rmat_TR}, we extend the definition of $\varphi_{0}$ by setting
\begin{equation}\label{equ:varphi_i_partial_fraction}
\varphi_{i}(z)
\coloneqq\sum_{\alpha}
(L_{\alpha}^{f})^{i}
\frac{y'(a_{\alpha})}{x_{f}''(a_{\alpha})}
\frac{1}{z-a_{\alpha}},
\qquad i=0,1.
\end{equation}
Equations \eqref{equ:Airy_sign_Xp} and \eqref{equ:basic_dzeta_Xp} then imply
\[
\rd\varphi_{i}
=\sum_{\alpha} (L_{\alpha}^{f})^{i}(\Delta_{\alpha}^{f})^{-1/2}
\rd\zeta^{\bar{\alpha}}.
\]
Comparing the residues at $a_{0},a_{1}$ and the value at infinity gives
\begin{equation}\label{equ:varphi1_curve}
\varphi_{1}(z)
=\frac{2-f(p-1)+
\bigl(f+\mathbf{q}(f(p-2)-2)\bigr)z}
{2\gamma_{f}N_{f}(z)}.
\end{equation}
We write the quantum product \eqref{equ:quantum_product_H} as
$H\ast_{f}H=A_{f}\mathbf{1}+B_{f}H$, where
\[
A_{f}
\coloneqq\frac{f^{2}(1-\mathbf{q})+(2-(p-1)f)^{2}\mathbf{q}}
{4(1-p^{2}\mathbf{q})},
\qquad
B_{f}
\coloneqq\frac{p(2-(p-1)f)\mathbf{q}}{1-p^{2}\mathbf{q}}.
\]
As in Section~\ref{sec:formal_open_closed}, the components of $\check{R}^{f}(\fu)^{*}H^{i}$ in the normalized canonical basis are denoted by
\[
\check{R}^{f}_{i\bar{\gamma}}(\fu)
\coloneqq
\left\langle
\check{R}^{f}(\fu)^{*} H^{i},\bar{e}_{\gamma}^{f}
\right\rangle_{f},
\qquad i=0,1.
\]
Equations \eqref{equ:Rmat_TR} and \eqref{equ:varphi_i_partial_fraction} give
\begin{equation}\label{equ:Rcheck_integral_components}
\frac{\fu\ee^{x_{f}(a_{\gamma})/\fu}}{\sqrt{2\pi\fu}}
\int_{\mathfrak{L}_{\gamma}}
\ee^{-x_{f}(z)/\fu}\rd\varphi_{i}(z)
\asymp\check{R}^{f}_{i\bar{\gamma}}(-\fu).
\end{equation}
For $\varphi=\varphi_{0},\varphi_{1}$, integration by parts gives
\[
\fu D\int_{\mathfrak{L}_{\gamma}}
\ee^{-x_{f}/\fu}\rd\varphi
=
\int_{\mathfrak{L}_{\gamma}}\ee^{-x_{f}/\fu}
\bigl((D\varphi)\rd x_{f}-(Dx_{f})\rd\varphi\bigr).
\]
To evaluate the right-hand side, we differentiate the expressions \eqref{equ:varphi0_curve} and \eqref{equ:varphi1_curve}, obtaining
\begin{equation}\label{equ:GM_rational_identities}
\begin{aligned}
(D\varphi_{0})\,\rd x_{f}-(Dx_{f})\,\rd\varphi_{0}
&=-K_{f}\,\rd\varphi_{0}+\rd\varphi_{1},\\
(D\varphi_{1})\,\rd x_{f}-(Dx_{f})\,\rd\varphi_{1}
&=A_{f}\,\rd\varphi_{0}
+(B_{f}-K_{f})\,\rd\varphi_{1}.
\end{aligned}
\end{equation}
Since $x_f'(a_\gamma)=0$, Equation~\eqref{equ:eigenvalue_critical_point_Xp} gives
\[
D\bigl[x_f(a_\gamma)\bigr]
=(Dx_f)(a_\gamma)=K_f-L_\gamma^f.
\]
Differentiating \eqref{equ:Rcheck_integral_components} and using \eqref{equ:GM_rational_identities}, we therefore obtain, for $\gamma=0,1$,
\begin{equation}\label{equ:Rcheck_QDE}
\begin{aligned}
(\fu D+L_{\gamma}^{f})\check{R}^{f}_{0\bar{\gamma}}(-\fu)
&=\check{R}^{f}_{1\bar{\gamma}}(-\fu),\\
(\fu D+L_{\gamma}^{f})\check{R}^{f}_{1\bar{\gamma}}(-\fu)
&=A_{f}\check{R}^{f}_{0\bar{\gamma}}(-\fu)
+B_{f}\check{R}^{f}_{1\bar{\gamma}}(-\fu).
\end{aligned}
\end{equation}
These are precisely the quantum differential equations satisfied by $R^f(\fu)$.

It remains to compare the initial values at $\mathbf{q}=0$. We use the components
\[
(\check{R}^{f})_{i}^{\ \alpha}(\fu)
\coloneqq
\left\langle
\check{R}^{f}(\fu)^{*} H^{i},(e^{f})^{\alpha}
\right\rangle_{f}
=(\Delta_{\alpha}^{f})^{1/2}\check{R}^{f}_{i\bar{\alpha}}(\fu).
\]
We will show that
\begin{equation}\label{equ:Rcheck_initial_value}
\left.
(\check{R}^{f})_{0}^{\ \alpha}(-\fu)
\right|_{\mathbf{q}=0}
=\mathcal{R}_{\alpha}^{f}(\fu),\qquad \alpha = 0,1.
\end{equation}
By \eqref{equ:Rmat_TR}, \eqref{equ:flat_unit_partial_fraction} and \eqref{equ:varphi0_curve}, integration by parts gives
\begin{equation}\label{equ:Rcheck_initial_integral}
\frac{(\Delta_{\alpha}^{f})^{1/2}}{\sqrt{2\pi\fu}}
\int_{\mathfrak{L}_{\alpha}}
\ee^{-(x_{f}-x_{f}(a_{\alpha}))/\fu}\rd y
\asymp (\check{R}^{f})_{0}^{\ \alpha}(-\fu).
\end{equation}
We take $\mathbf{q}\to 0$ in the coordinate $z$ for $\alpha=0$ and $w=\mathbf{q}z$ for $\alpha=1$.
In these coordinates, $x_{f}-x_{f}(a_{\alpha})$ and $\rd y$ extend holomorphically to $\mathbf{q}=0$ near the critical points, which remain nondegenerate for generic $p,f$.
For each critical point, we first work in a real parameter range where the limiting integral converges.
For $\alpha=0$, we take $-1<f<0$ and $\fu>0$.
At $\mathbf{q}=0$, set $s=1/z$. Then
\[
x_f(1/s)-x_f(a_0)
=f\log\frac{s}{-f}-(f+1)\log\frac{1-s}{1+f},
\qquad
\rd y=\frac{\rd s}{s(1-s)}.
\]
We choose the square root
$\left.(\Delta_0^f)^{-1/2}\right|_{\mathbf{q}=0}
=1/\sqrt{-f(f+1)}$.
Then \eqref{equ:Airy_sign_Xp} orients the limiting
Lefschetz thimble from $s=0$ to $s=1$. Thus
\[
\int_0^1
s^{-f/\fu-1}(1-s)^{(f+1)/\fu-1}\rd s
=
\frac{\Gamma(-f\fu^{-1})\Gamma((f+1)\fu^{-1})}
{\Gamma(\fu^{-1})}.
\]
For $\alpha=1$, we take $0<f<1/p$ and $\fu>0$.
As $\mathbf{q}\to0$, the critical point satisfies
$\mathbf{q}a_1\to f/(1-(p-1)f)$ and
\[
x_f(w/\mathbf{q})-x_f(a_1)
\longrightarrow{}
-f\log\frac{\bigl(1-(p-1)f\bigr)w}{f}
-(1-pf)\log
\frac{\bigl(1-(p-1)f\bigr)(1-w)}{1-pf},
\]
while
\[
\rd y\longrightarrow
-\left(\frac{1}{w}+\frac{p}{1-w}\right)\rd w.
\]
We choose the square root
\[
\left.(\Delta_1^f)^{-1/2}\right|_{\mathbf{q}=0}
=\frac{1}{\sqrt{f(1-pf)(1-(p-1)f)}}.
\]
Then \eqref{equ:Airy_sign_Xp} orients the limiting
Lefschetz thimble from $w=1$ to $w=0$. Thus
\[
\int_1^0
w^{f/\fu}(1-w)^{(1-pf)/\fu}
\left(-\frac{1}{w}-\frac{p}{1-w}\right)\rd w=\frac{1}{1-(p-1)f}
\frac{\Gamma(f\fu^{-1})\Gamma((1-pf)\fu^{-1})}
{\Gamma((1-(p-1)f)\fu^{-1})}.
\]
Substituting these evaluations into \eqref{equ:Rcheck_initial_integral} and applying Stirling's formula, we obtain
\[
\left.(\check{R}^{f})_{0}^{\ \alpha}(-\fu)
\right|_{\mathbf{q}=0}
=
\exp\left(
-\sum_{k\geq 1}
\frac{B_{2k}N^{f}_{2k-1,\alpha}}
{2k(2k-1)}\fu^{2k-1}
\right)
=\mathcal{R}_{\alpha}^{f}(\fu).
\]
The coefficients of these series are rational functions of $p,f$, so the identities extend coefficientwise to generic $p,f$.

Therefore, $\check R^f(-z)$ and $R^f(z)$ satisfy the same quantum differential equations and initial conditions.
The desired identity follows from the uniqueness of the solution.
\end{proof}

We denote by $\check{\Omega}^{f}$ the CohFT associated with $\mathcal{C}_{f}$.
For $g\geq 2$, we set
\[
\check{\mathcal{F}}_{g}(\mathbf{q}) \coloneqq \int_{\Mbar_{g}}\check{\Omega}^{f}_{g,0}.
\]

\begin{theorem}\label{thm:FgAB}
For $g\geq 2$, under the mirror map \eqref{equ:mirror_map}, we have
\[
\mathcal{F}_{g}(\mathbf{Q})=(-1)^{g-1}\check{\mathcal{F}}_{g}(\mathbf{q}).
\]
\end{theorem}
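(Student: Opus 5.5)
We compare the Givental--Teleman graph sums for the two CohFTs $\Omega^{f}$ and $\check{\Omega}^{f}$. Both are semisimple, with state space $V_{f}$ identified through $\iota_{f}$, and this identification preserves unit, product, pairing and normalized canonical basis. Since $\mathcal{F}_{g}=\int_{\Mbar_{g}}\Omega^{f}_{g,0}$ and $\check{\mathcal{F}}_{g}=\int_{\Mbar_{g}}\check{\Omega}^{f}_{g,0}$, it suffices to relate the reconstruction data. On the A-side we have $\Omega^{f}=R^{f}\cdot T_{R^{f}}\cdot\Omega^{\mathrm{pt},2}$, where $T_{R^f}(z)=z(\bar{\mathbf{1}}-(R^{f})^{-1}(z)\mathbf{1})$. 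On the B-side, by \eqref{equ:B_translation_Xp}, we have $\check{\Omega}^{f}=\check R^{f}\cdot\check T\cdot\Omega^{\mathrm{pt},2}$ with $\check T(z)=z(\bar{\mathbf{1}}-(\check R^{f})^{-1}(z)\mathbf{1})$. Proposition~\ref{prop:identification_R} gives $\check R^{f}(z)=R^{f}(-z)$, and hence $\check T(z)=-T_{R^{f}}(-z)$.

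The key step is to show that replacing $(R(z),T(z))$ by $(R(-z),-T(-z))$ changes the genus-$g$, $n=0$ integrals by the sign $(-1)^{g-1}$. I would do this by a homogeneity count in each stable graph $\Gamma$. The edge factor $\frac{\eta^{-1}-R^{-1}(\psi')\otimes R^{-1}(\psi'')}{\psi'+\psi''}$ acquires a factor $(-1)^{m+1}$ on its term of total $\psi$-degree $m$. Each translation insertion $T(\psi)$ of degree $m\ge 2$ (we have $T=O(z^2)$) acquires $(-1)^{m+1}$. The vertex terms are the trivial CohFT with $\int_{\Mbar_{g_v,n_v}}\psi$-monomials, which are nonzero only when the total degree equals $3g_v-3+n_v$. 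Write $E$ for the number of edges, $k_v$ for the number of translation points at $v$, $k=\sum_v k_v$, and $D$ for the total $\psi$-degree carried by edges and translations. Then
\[
D=\sum_{v}(3g_{v}-3+n_{v}+k_{v})=3g-3+k-E ,
\]
since $\sum_v(g_v-1)+E=g-1$ and $\sum_v n_v=2E$. The total sign is therefore
\[
(-1)^{D+E+k}=(-1)^{3g-3+2k}=(-1)^{g-1}.
\]
Summing over graphs and $k$ gives $\int_{\Mbar_g}\check\Omega^{f}_{g,0}=(-1)^{g-1}\int_{\Mbar_g}\Omega^{f}_{g,0}$, which is the claim. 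The pairing and vertex normalizations are identical on both sides because $\iota_f$ is an isometry of Frobenius algebras and the B-side normalized basis was matched to the A-side square roots via \eqref{equ:Airy_sign_Xp}.

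I expect the main obstacle to be keeping track of the sign conventions in the translation. One has to check that the degree-$1$ part of $T$ vanishes on both sides, so that only $m\ge2$ terms occur and the forgetful-pushforward sum is well defined. One also has to check that the overall sign $\check T(z)=-T_{R^f}(-z)$ is compatible with the homogeneity count; this is exactly where \eqref{equ:B_translation_Xp} and the symplectic condition enter. A secondary issue is that Proposition~\ref{prop:identification_R} was proved generically in $f$, and $\mathcal{F}_{g}$ for $g\ge2$ is independent of $f$. So the identity should be established for generic nonexceptional $f$ as an identity of formal series in $\mathbf{q}$ (equivalently $\mathbf{Q}$ through the mirror map, which was already used to obtain the Yukawa coupling and quantum product). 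The resulting $f$-independence of $\check{\mathcal{F}}_{g}$ then follows as a byproduct.
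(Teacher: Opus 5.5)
Your argument is correct and is essentially the paper's proof. The paper likewise matches the vertices via $\iota_f$, invokes Proposition~\ref{prop:identification_R} and \eqref{equ:B_translation_Xp}, and notes that each edge term or translation term of $\psi$-degree $m$ picks up $(-1)^{m+1}$, which is $(-1)^{\mathrm{codim}}$ of its pushforward, so integrating over $\Mbar_g$ gives $(-1)^{3g-3}$; your explicit count $D+E+k\equiv 3g-3 \pmod 2$ is the same bookkeeping.

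One correction: with the convention $T(z)=z(\bar{\mathbf{1}}-R^{-1}(z)\mathbf{1})$, the degree-one part $\bar{\mathbf{1}}-\mathbf{1}$ does not vanish, so $T=O(z^2)$ is false. This does not hurt your argument, because the degree-one terms carry sign $(-1)^{1+1}=+1$ and are identical on both sides (they only produce the TFT weights via the dilaton equation), so your sign count goes through without that claim.
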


\begin{proof}
The vertex contributions of the underlying TFTs agree under \eqref{equ:state_space_identification}.
By Proposition~\ref{prop:identification_R} and \eqref{equ:B_translation_Xp}, it remains to compare the signs in the two graph sums under $\fu=-z$.
A leg contribution of degree $k$ in the $\psi$-classes is multiplied by $(-1)^{k}$.
An edge term of degree $k+\ell$ in the $\psi$-classes is multiplied by $(-1)^{k+\ell+1}$ and has codimension $k+\ell+1$ after pushforward along the gluing map.
A translation term of degree $m$ is multiplied by $(-1)^{m-1}$ and has codimension $m-1$ after pushforward along the forgetful map.
Hence every graph contribution of codimension $d$ is multiplied by $(-1)^{d}$.
Since $\dim \Mbar_{g}=3g-3$, integration gives the sign $(-1)^{3g-3}=(-1)^{g-1}$.
\end{proof}

\subsection{Open mirror symmetry}
\label{subsec:open_mirror_symmetry_Xp}

Let $f\in\bbZ$. The B-model open coordinate is
\begin{equation}\label{equ:open_coordinate_map_Xp}
\widehat{X}\coloneqq\ee^{x_{f}(z)}
=z(1-z)^{-f-1}(1-\mathbf{q}z)^{pf-1}.
\end{equation}
It is related to the A-model open coordinate $X$ by
\[
\widehat{X} = -\frac{X}{(1-\mathbf{q})^{p+1}}.
\]
We denote by $\rho$ the local inverse of $\widehat{X}$ at $z=0$.
The unstable B-model potentials are normalized by
\begin{equation}\label{equ:B_disk_annulus_definitions}
\begin{aligned}
\check{\mathcal{F}}_{0,1}^{f}(\mathbf{q};\widehat{X})
&\coloneqq\int_{U=0}^{\widehat{X}}y(\rho(U))
\frac{\rd U}{U},\\
\check{\mathcal{F}}_{0,2}^{f}(\mathbf{q};\widehat{X}_{1},\widehat{X}_{2})
&\coloneqq\int_{U_{1}=0}^{\widehat{X}_{1}}
\int_{U_{2}=0}^{\widehat{X}_{2}}
\left(
(\rho\times\rho)^{*}B
-\frac{\rd U_{1}\rd U_{2}}{(U_{1}-U_{2})^{2}}
\right).
\end{aligned}
\end{equation}

For $k\geq 0$ and $\beta=0,1$, we define
\[
\zeta_{k}^{\bar{\beta}}(\widehat{X})
\coloneqq
\int_{0}^{\widehat{X}}\rho^{*}\rd\zeta_{k}^{\bar{\beta}}.
\]
For $n>0$ and $2g-2+n>0$, the B-model potentials are obtained by integrating the Eynard--Orantin differentials $\omega_{g,n}^{f}$ of $\mathcal{C}_{f}$ in each open coordinate from $0$ to $\widehat{X}_{i}$.
Equation~\eqref{equ:TR_CohFT_open_leaves} then gives
\begin{equation}\label{equ:TR_integrated_open_leaves}
\check{\mathcal{F}}_{g,n}^{f}(\mathbf{q};\widehat{X}_{1},\ldots,\widehat{X}_{n})
=
\sum_{\substack{k_{1},\ldots,k_{n}\geq 0\\
\beta_{1},\ldots,\beta_{n}\in\{0,1\}}}
\left(
\int_{\Mbar_{g,n}}
\check{\Omega}^{f}_{g,n}
\bigl(
\bar{e}_{\beta_{1}}^{f},\ldots,\bar{e}_{\beta_{n}}^{f}
\bigr)
\prod_{i=1}^{n}\psi_{i}^{k_{i}}
\right)
\prod_{i=1}^{n}\zeta_{k_{i}}^{\bar{\beta}_{i}}(\widehat{X}_{i}).
\end{equation}

We begin with disk mirror symmetry, which will also
be used to compare the leg contributions.

\begin{proposition}\label{prop:disk_mirror_Xp}
For every integer framing $f$, we have
\[
{\mathcal{F}_{0,1}^{f}}(\mathbf{Q};X) = \check{\mathcal{F}}_{0,1}^{f}(\mathbf{q};\widehat{X}).
\]
\end{proposition}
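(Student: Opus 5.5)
The plan is to compute both sides explicitly as power series in $X$ (equivalently $\widehat X$) with coefficients in $\bbQ[\![\mathbf{q}]\!]$, and compare them coefficientwise.

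\emph{A-model side.} I would use the gluing formula \eqref{equ:open_vertex_gluing}. Extract the coefficient of $\hbar^{-1}$ with a single power sum $p_w$. In $F^f=\log Z^f$, the genus-zero one-hole part of the open sector is
\[
\log\bigl(Z^{f}/Z_{\mathrm{cl},>0}\bigr)\Big|_{\hbar^{-1}p_w}.
\]
To leading order in $\hbar$, with $\rho$ a hook or a single row, it is governed by the classical limit of $s_{(w)}(\mathsf q^{\nu+\boldsymbol\varrho})$. Alternatively, and more cleanly, I would use the graph-sum route. The disk is the unstable case $(0,1)$, so by the localization identity \eqref{equ:formal_relative_localization} it equals
\[
\sum_w N^f_{0,0,(w)}X^w\,\bigl\langle \mathbf 1, S^f(1/w)\Phi^f_0\bigr\rangle\text{-type terms},
\]
that is, $\sum_w \frac{1}{w^{2}}\,(\text{first component of } V^f_w)\,X^w$. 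Concretely, the disk potential is the pairing of $\mathcal U^f_{\rA}$ with the unit, namely the $J$-function paired with $\Phi_0$. The $J$-function of the twisted theory of $\bbP^1$ is explicit by the Coates--Givental hypergeometric modification:
\[
J=\sum_d \ee^{-dt}\,\frac{\prod(\text{weights of }E_p)}{\prod(\text{weights of }T\bbP^1)}.
\]
Restricting to $x_0$ and setting $z=1/w$ then gives a double hypergeometric sum
\[
\sum_{w,d}c_{w,d}(f,p)\,\mathbf Q^d X^w,
\]
with $c_{w,d}$ a product of binomials, after the mirror map $\mathbf Q=\mathbf q(1-\mathbf q)^{p^2-1}$. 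This is the standard outer-brane disk computation of Fang--Liu \cite{FL13}, adapted to the twisted setting. Since $(0,1)$ is unstable, I would lean on the vertex formula directly: $W_{(w),\nu}$ has an explicit $\hbar\to0$ limit.

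\emph{B-model side.} By \eqref{equ:B_disk_annulus_definitions}, $X\partial_X\check{\mathcal F}^f_{0,1}=y(\rho(\widehat X))$. I would expand $y$ in $\widehat X$ by Lagrange inversion on $\widehat X=z\,\phi(z)^{-1}$ with $\phi(z)=(1-z)^{f+1}(1-\mathbf q z)^{1-pf}$:
\[
[\widehat X^w]\,y(\rho)=\frac1w\,[z^{w-1}]\,y'(z)\,\phi(z)^w .
\]
Writing $y'=\frac{1}{1-z}-\frac{p\mathbf q}{1-\mathbf qz}$ makes the right side an explicit finite double sum of products of binomial coefficients in $\mathbf q$. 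Then substitute $\widehat X=-X/(1-\mathbf q)^{p+1}$.

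\emph{Comparison.} For each $w$, I must identify a polynomial in $\mathbf q$ (from the B-side) with the A-side series in $\mathbf Q$ composed with the mirror map, including the factor $(1-\mathbf q)^{-(p+1)w}$. The cleanest way is to show that both sides satisfy the same Picard--Fuchs type ODE in $\mathbf Q$ with the same initial term. At $\mathbf q=0$, the B-side reduces to
\[
\frac1w[z^{w-1}](1-z)^{(f+1)w-1}=\frac{(-1)^{w-1}}{w}\binom{(f+1)w-1}{w-1},
\]
which matches \eqref{equ:degree_zero_open_disk} after the sign $(-1)^w$ from $\widehat X=-X$ and the integration factor $1/w$. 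Alternatively, one can do a direct residue/Lagrange resummation converting the $\mathbf Q$-series into $\mathbf q$; this is where I expect the main obstacle, namely matching the $d>0$ terms, which requires a hypergeometric identity. I would first try proving it for $f$ in a range (e.g.\ $f\ge0$), where everything is polynomial in $f$ coefficientwise, so that the identity extends to every integer $f$ by polynomiality in $f$. This also explains why no nonexceptional condition is needed: no $R$-matrix or pairing appears, only the disk function.
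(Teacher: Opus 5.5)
Your outline has the right overall shape. The string equation reduces the A-side coefficient to $N^f_{0,0,(w)}J_0^f(\mathbf{Q},1/w)$, Lagrange inversion handles the B-side, the two are matched by an ODE with the same initial term, and the result is extended in $f$. However, the step that carries the proof is missing. You never say which ODE, where it comes from on the A-side, or why the B-side polynomial satisfies it, and you defer the $d>0$ matching to an unresolved hypergeometric identity. Neither of the A-side evaluations you offer fills this.

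\begin{itemize}
\item \emph{Vertex route.} This fails as stated. The disk is the $\hbar^{-1}p_w$-coefficient of $\log Z^f$, which is an expectation over $\nu$ normalized by $Z_{\mathrm{cl},>0}$. Since $Z_{\mathrm{cl},>0}$ carries $\hbar^{-2}$ in its exponent, its $\hbar\to 0$ behaviour is a limit-shape problem, not a termwise limit of $W_{(w),\nu}$. Each fixed $\mathbf{Q}$-coefficient is a finite computation, but this gives no closed form in $\mathbf{q}$.
\item \emph{Coates--Givental route.} This yields an $I$-function in its own variable with a nontrivial mirror map, since the degree-$d$ terms of $zI$ have a nonzero $z^0$ part. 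You would still have to relate that variable to $\mathbf{q}$ and then prove exactly the identity you flagged.
\end{itemize}

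The missing idea is that the required ODE is already available from Section 1 and is only second order. Write \eqref{equ:quantum_product_H} as $H\ast_f H=A_f\mathbf{1}+B_fH$ and set $D=\mathbf{Q}\partial_{\mathbf{Q}}$. The divisor equation then gives
\[
\bigl[(D/w+f/2)^2-B_f(D/w+f/2)-A_f\bigr]J_0^f(\mathbf{Q},1/w)=0,\qquad J_0^f(0,1/w)=1,
\]
and this determines $J_0^f(\mathbf{Q},1/w)$ uniquely as a formal power series. On the B-side, your Lagrange inversion gives
\[
[X^w]\check{\mathcal{F}}^f_{0,1}=N^f_{0,0,(w)}(1-\mathbf{q})^{-(p+1)w}P^f_w(\mathbf{q}),
\]
where $P^f_w$ is your binomial double sum normalized to have constant term $1$. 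Substituting this into the QDE gives a second-order ODE in $\mathbf{q}$ with polynomial coefficients. Comparing coefficients in that ODE is a three-term recurrence, and the recurrence follows from the explicit rational ratio of consecutive coefficients of $P^f_w$. So no transcendental hypergeometric identity is needed.

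Note also that this argument uses $S^f$, $\Phi^f_0$ and the specialized pairing $\langle\cdot,\cdot\rangle_f$, so it is first run at nonexceptional integral $f$. The nonexceptional hypothesis is removed only by the extension in $f$: the A-side coefficients are rational in $f$, the B-side coefficients are polynomial in $f$, and the specialization is regular coefficientwise. It is not removed because no pairing enters, as your last sentence suggests.
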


\begin{proof}
We fix $w\geq 1$ and first assume that $f$ is a nonexceptional integral framing.
{On the A-model side, the string equation and~\eqref{equ:degree_zero_open_disk} give}
\begin{equation}\label{equ:A_disk_J0_winding}
[X^{w}]{\mathcal{F}_{0,1}^{f}}(\mathbf{Q};X)
=N_{0,0,(w)}^{f}
J_{0}^{f}\left(\mathbf{Q},\frac{1}{w}\right),
\end{equation}
where $J_{0}^{f}(\mathbf{Q},z)\coloneqq\langle\mathbf{1},S^{f}(z)\Phi_{0}^{f}\rangle_{f}$.
The divisor equation and \eqref{equ:quantum_product_H} imply the quantum differential equation
\begin{equation}\label{equ:disk_scalar_QDE}
\left[
\left(\frac{D}{w}+\frac{f}{2}\right)^{2}
-B_{f}\left(\frac{D}{w}+\frac{f}{2}\right)-A_{f}
\right]
J_{0}^{f}\left(\mathbf{Q},\frac{1}{w}\right)=0,
\qquad
J_{0}^{f}\left(0,\frac{1}{w}\right)=1.
\end{equation}
Regarding $f$ as a formal parameter, Equation \eqref{equ:disk_scalar_QDE} uniquely determines $J_{0}^{f}(\mathbf{Q},1/w)$ as a formal power series in $\mathbf{Q}$ with coefficients rational in $f$.

On the B-model side, Lagrange inversion applied to \eqref{equ:open_coordinate_map_Xp} gives
\begin{equation}\label{equ:B_disk_lagrange_coefficients}
[\widehat{X}^{w}]\check{\mathcal{F}}_{0,1}^{f}(\mathbf{q};\widehat{X})
=\frac{(-1)^{w+1}}{w^{2}}
\sum_{j=0}^{w}C_{w,j}^{f}\mathbf{q}^{j},
\end{equation}
where
\[
C_{w,j}^{f}
\coloneqq\binom{(1-pf)w}{j}\binom{(f+1)w-1}{w-1-j}
-p\binom{(1-pf)w-1}{j-1}\binom{(f+1)w}{w-j}.
\]
Here $\binom{a}{k}=0$ for $k<0$.
To compare the B-model disk coefficients with $J_{0}^{f}(\mathbf{Q},1/w)$, we define
\[
P_{w}^{f}(\mathbf{q})
\coloneqq\sum_{j=0}^{w}
\frac{C_{w,j}^{f}}{C_{w,0}^{f}}\mathbf{q}^{j}.
\]
Substituting $\widehat{X}=-X/(1-\mathbf{q})^{p+1}$ in \eqref{equ:B_disk_lagrange_coefficients} and using $C_{w,0}^{f}=-w^{2}N_{0,0,(w)}^{f}$ gives
\begin{equation}\label{equ:B_disk_normalized_solution}
\frac{1}{N_{0,0,(w)}^{f}}
[X^{w}]\check{\mathcal{F}}_{0,1}^{f}(\mathbf{q};\widehat{X})
=(1-\mathbf{q})^{-(p+1)w}P_{w}^{f}(\mathbf{q}).
\end{equation}
Since $P_{w}^{f}(0)=1$, it remains to show that $(1-\mathbf{q})^{-(p+1)w}P_{w}^{f}(\mathbf{q})$ satisfies \eqref{equ:disk_scalar_QDE}.
In terms of $P=P_{w}^{f}$, this equation becomes
\begin{equation}\label{equ:disk_Pw_differential_equation}
\begin{aligned}
0={}&\mathbf{q}(\mathbf{q}-1)(p^{2}\mathbf{q}-1)P''\\
&+\bigl(
(fp^{3}w-2p^{2}w+p^{2})\mathbf{q}^{2}
+(-fp^{2}w-fpw+2w-2)\mathbf{q}
+fw+1
\bigr)P'\\
&+\bigl(
(-fp^{3}w^{2}+p^{2}w^{2})\mathbf{q}
+fpw^{2}+pw-w^{2}+w
\bigr)P.
\end{aligned}
\end{equation}
The identity
\[
\frac{C_{w,j+1}^{f}}{C_{w,j}^{f}}=
\frac{(1-pf)w-(p+1)(j+1)}
{(1-pf)w-(p+1)j}
\frac{(1-pf)w-j}{j+1}
\frac{w-j}{fw+j+1}
\]
implies the three-term recurrence obtained by comparing coefficients in \eqref{equ:disk_Pw_differential_equation}.
By uniqueness,
\[
(1-\mathbf{q})^{-(p+1)w}P_{w}^{f}(\mathbf{q})
=J_{0}^{f}(\mathbf{Q},1/w).
\]
Since $w\geq 1$ is arbitrary, \eqref{equ:A_disk_J0_winding} and \eqref{equ:B_disk_normalized_solution} imply disk mirror symmetry at every nonexceptional integral framing.

It remains to extend the identity to every integer framing.
In positive degree, the coefficients of $\mathcal{F}^{\bbT'}_{0,1}|_{\sfv=f\sfu,\sfu=1}$ are rational in $f$, whereas the B-model coefficients are polynomial in $f$ by \eqref{equ:B_disk_lagrange_coefficients}.
Their equality at all nonexceptional integral framings therefore implies equality as rational functions of $f$.
{By the coefficientwise regularity described in Section~\ref{subsec:open_leaves} and~\eqref{equ:formal_relative_localization}, this identity yields disk mirror symmetry in positive degree at every integer framing.}
In degree zero, substituting $\widehat{X}=-X$ and $C_{w,0}^{f}=-w^{2}N_{0,0,(w)}^{f}$ into \eqref{equ:B_disk_lagrange_coefficients} gives the prescribed A-model disk $\sum_{w\geq 1}N_{0,0,(w)}^{f}X^{w}$.
\end{proof}

The B-model leg contribution is
\begin{equation}\label{equ:B_model_open_leaf}
\mathcal{L}_{\rB}^{f}(\mathbf{q};\widehat{X},\fu)
\coloneqq
(\check{R}^{f}(\fu))^{-1}
\mathcal{U}_{\rB}^{f}(\mathbf{q};\widehat{X},\fu),
\end{equation}
where
\[
\mathcal{U}_{\rB}^{f}(\mathbf{q};\widehat{X},\fu)
\coloneqq\sum_{\beta=0}^{1}\sum_{k\geq 0}
\zeta_{k}^{\bar{\beta}}(\widehat{X})\bar{e}_{\beta}^{f}\fu^{k}.
\]
To compare the leg contributions at each winding number, we define
\[
W_{w}^{\rB}\coloneqq[\widehat{X}^{w}]\mathcal{U}_{\rB}^{f}(\mathbf{q};\widehat{X},0),\qquad w\geq 1.
\]

\begin{proposition}\label{prop:open_leaf_identity_Xp}
Let $f$ be a nonexceptional integral framing.
Then
\[
\mathcal{L}_{\rA}^{f}(\mathbf{Q};X,z) = \mathcal{L}_{\rB}^{f}(\mathbf{q};\widehat{X},-z).
\]
\end{proposition}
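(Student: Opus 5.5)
Since $\mathcal{L}_{\rA}^{f}=R^{f}(z)^{-1}\mathcal{U}_{\rA}^{f}$ and $\mathcal{L}_{\rB}^{f}=\check R^{f}(\fu)^{-1}\mathcal{U}_{\rB}^{f}$, and Proposition~\ref{prop:identification_R} gives $R^{f}(z)=\check R^{f}(-z)$, it suffices to prove
\[
\mathcal{U}_{\rA}^{f}(\mathbf{Q};X,z)=\mathcal{U}_{\rB}^{f}(\mathbf{q};\widehat{X},-z).
\]
Both sides have a natural structure in winding number. By~\eqref{equ:A_model_open_input_expansion}, the left side is $\sum_{w}V_{w}^{f}X^{w}/(1-wz)$. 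On the B-model side, $\rd\zeta_{k}^{\bar\beta}=(-\rd\circ\frac{1}{\rd x_f})^{k}\rd\zeta^{\bar\beta}$. Since $\widehat X=\ee^{x_f}$, the operator $\frac{\rd}{\rd x_f}$ acts on $\widehat{X}^{w}$ as multiplication by $w$. Hence
\[
[\widehat X^{w}]\zeta_{k}^{\bar\beta}(\widehat X)=(-w)^{k}[\widehat X^{w}]\zeta_{0}^{\bar\beta}(\widehat X).
\]
We only need to check that the boundary terms at $0$ vanish: $\zeta_k$ vanishes at $z=0$ because $\rd\zeta^{\bar\beta}$ is regular there and the iterated primitives are taken from $0$. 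Summing over $k$ then gives
\[
\mathcal{U}_{\rB}^{f}(\mathbf{q};\widehat X,-z)=\sum_{w}\frac{W_{w}^{\rB}\widehat X^{w}}{1-wz}.
\]
Substituting $\widehat X=-X/(1-\mathbf{q})^{p+1}$, the statement reduces to the finite-dimensional identity
\[
V_{w}^{f}=(-1)^{w}(1-\mathbf{q})^{-(p+1)w}W_{w}^{\rB}
\]
in $V_f$, one for each $w\geq1$.

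To prove this identity, I would pair both sides with the basis $\{\mathbf{1},H\}$, or equivalently test against the two functions $\varphi_0,\varphi_1$. Since $W^{\rB}_{w}=\sum_\beta[\widehat X^{w}]\zeta_0^{\bar\beta}\,\bar e_\beta$, we have $\langle H^{i},W_w^{\rB}\rangle_f=\sum_\beta L_\beta^{i}\Delta_\beta^{-1/2}[\widehat X^w]\zeta_0^{\bar\beta}=[\widehat X^{w}]\varphi_i(\rho(\widehat X))$, up to the constant $\varphi_i(0)$. The $\mathbf{1}$-component is therefore $[\widehat X^w]\varphi_0$. By~\eqref{equ:varphi0_curve}, $\varphi_0=\rd y/\rd x_f-\mathrm{const}$, which is $\widehat X\partial_{\widehat X}$ of the disk function. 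Its coefficient is $w$ times the disk coefficient, and Proposition~\ref{prop:disk_mirror_Xp} matches it with $\langle\mathbf{1},V_w^f\rangle_f=w^{2}N^{f}_{0,0,(w)}J_0^f(\mathbf{Q},1/w)$. I expect the normalizations to work out as $w\cdot w$ once the $1/w$ from the $\log$-derivative is tracked.

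For the $H$-component, on the A-side the QDE gives
\[
\langle H,S^f(1/w)\Phi_0^f\rangle_f=\Bigl(\tfrac{D}{w}+\tfrac f2\Bigr)J_0^f,
\]
using the divisor equation with $H|_{x_0}=f/2$. On the B-side, I would differentiate $[\widehat X^w]\varphi_0$ in $\mathbf{q}$ at fixed $\widehat X$ and use the first identity of~\eqref{equ:GM_rational_identities}. Writing $D\varphi_0$ at fixed $\widehat X$ as $D\varphi_0-(Dx_f)\varphi_0'/x_f'$, so that $\rd\varphi_1=(D\varphi_0)|_{\widehat X}\,\rd x_f+K_f\,\rd\varphi_0$, yields $\varphi_1=\bigl(D|_{\widehat X}+K_f\bigr)\varphi_0$ up to constants. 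Taking $[\widehat X^{w}]$ and accounting for the conversion factor $(1-\mathbf{q})^{-(p+1)w}$ between $\widehat X^w$ and $X^w$ should turn $K_f$ into $f/2$. Indeed, $D$ applied to $(1-\mathbf{q})^{-(p+1)w}$ contributes $w(p+1)\mathbf{q}/(1-p^2\mathbf{q})$, which is exactly $w(K_f-f/2)$. This yields the $H$-component.

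The main obstacle is this bookkeeping: the constants of integration in $\varphi_i$, the sign $(-1)^w$, the factor $1/w$ versus $w$, and the requirement that $D$ act at fixed $X$ rather than fixed $\widehat X$. I would first verify the identity at $\mathbf{q}=0$ as a sanity check. Finally, I would note that all quantities are rational in $f$ and well defined at nonexceptional $f$, so the equality holds there.
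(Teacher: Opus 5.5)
Your proposal follows the paper's proof step for step. You reduce via $R^{f}(z)=\check{R}^{f}(-z)$ to $\mathcal{U}_{\rA}^{f}(\mathbf{Q};X,z)=\mathcal{U}_{\rB}^{f}(\mathbf{q};\widehat{X},-z)$, use $[\widehat{X}^{w}]\zeta_{k}^{\bar\beta}=(-w)^{k}[\widehat{X}^{w}]\zeta_{0}^{\bar\beta}$, and prove $V_{w}^{f}=(-1)^{w}(1-\mathbf{q})^{-(p+1)w}W_{w}^{\rB}$ by matching the $\mathbf{1}$-pairing (via $\varphi_0$ and disk mirror symmetry) and the $H$-pairing (via the first identity in \eqref{equ:GM_rational_identities} and the divisor equation), then concluding by nondegeneracy.

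Two bookkeeping points need correcting:
\begin{itemize}
\item $\varphi_0\circ\rho$ equals $(\widehat{X}\partial_{\widehat{X}})^{2}\check{\mathcal{F}}^{f}_{0,1}$ up to a constant, not a single $\widehat{X}\partial_{\widehat{X}}$ of it. This gives the factor $w^{2}$ directly.
\item The Gauss--Manin identity yields $[\widehat{X}^{w}](\varphi_1\circ\rho)=(K_f+D/w)[\widehat{X}^{w}](\varphi_0\circ\rho)$, not $\varphi_1=(D+K_f)\varphi_0$. The $1/w$ is exactly what lets your computation $D(1-\mathbf{q})^{-(p+1)w}=w(K_f-f/2)(1-\mathbf{q})^{-(p+1)w}$ turn $K_f$ into $f/2$.
\end{itemize}
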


\begin{proof}
Equations \eqref{equ:flat_unit_partial_fraction}, \eqref{equ:varphi0_curve}, and \eqref{equ:varphi_i_partial_fraction} give
\[
\begin{aligned}
\langle\mathbf{1},\mathcal{U}_{\rB}^{f}(\mathbf{q};\widehat{X},0)\rangle_{f}
&=\varphi_{0}(\rho(\widehat{X}))-\varphi_{0}(0)
=\frac{\partial y}{\partial x_{f}},\\
\langle H,\mathcal{U}_{\rB}^{f}(\mathbf{q};\widehat{X},0)\rangle_{f}
&=\varphi_{1}(\rho(\widehat{X}))-\varphi_{1}(0).
\end{aligned}
\]
Recall that $V_{w}^{f}=w^{2}N_{0,0,(w)}^{f}S^{f}(1/w)\Phi_{0}^{f}$.
Proposition~\ref{prop:disk_mirror_Xp} and \eqref{equ:A_disk_J0_winding} imply
\[
\langle\mathbf{1},\frac{(-1)^{w}W_{w}^{\rB}}{(1-\mathbf{q})^{(p+1)w}}\rangle_{f}
=w^{2}[X^{w}]\check{\mathcal{F}}_{0,1}^{f}(\mathbf{q};\widehat{X})
=\langle\mathbf{1},V_{w}^{f}\rangle_{f}.
\]

Expressing the first identity in \eqref{equ:GM_rational_identities} in the coordinate $\widehat{X}$ and comparing coefficients gives
\begin{equation}\label{equ:H_pairing_B_leaf}
\langle H,W_{w}^{\rB}\rangle_{f}=\left(K_{f}+\frac{D}{w}\right)\langle\mathbf{1},W_{w}^{\rB}\rangle_{f}.
\end{equation}
Since
\[
D\bigl((1-\mathbf{q})^{-(p+1)w}\bigr)
=w\left(K_{f}-\frac{f}{2}\right)(1-\mathbf{q})^{-(p+1)w},
\]
Equation \eqref{equ:H_pairing_B_leaf} becomes
\[
\langle H,\frac{(-1)^{w}W_{w}^{\rB}}{(1-\mathbf{q})^{(p+1)w}}\rangle_{f}
=\left(\frac{f}{2}+\frac{D}{w}\right)
\langle\mathbf{1},\frac{(-1)^{w}W_{w}^{\rB}}{(1-\mathbf{q})^{(p+1)w}}\rangle_{f}.
\]
The divisor equation gives the same relation for $V_{w}^{f}$.
Their pairings with $\mathbf{1}$ and $H$ therefore agree, and nondegeneracy of the pairing yields
\begin{equation}\label{equ:vector_winding_identity}
\frac{(-1)^{w}W_{w}^{\rB}}{(1-\mathbf{q})^{(p+1)w}}=V_{w}^{f}.
\end{equation}

Since $\rd\zeta_{k}^{\bar{\beta}}=(-\rd\circ 1/\rd x_{f})^{k}\rd\zeta^{\bar{\beta}}$, we have
\begin{equation}\label{equ:zeta_winding_coefficients}
[\widehat{X}^{w}]\zeta_{k}^{\bar{\beta}}
=(-w)^{k}[\widehat{X}^{w}]\zeta_{0}^{\bar{\beta}}.
\end{equation}
Using \eqref{equ:vector_winding_identity} and \eqref{equ:A_model_open_input_expansion}, we obtain
\[
\mathcal{U}_{\rB}^{f}(\mathbf{q};\widehat{X},-z)
=\sum_{w\geq 1}\frac{W_{w}^{\rB}\widehat{X}^{w}}{1-wz}
=\sum_{w\geq 1}\frac{V_{w}^{f}X^{w}}{1-wz}
=\mathcal{U}_{\rA}^{f}(\mathbf{Q};X,z).
\]
Applying Proposition~\ref{prop:identification_R} then proves the proposition.
\end{proof}

We now use this comparison to prove annulus mirror symmetry.

\begin{proposition}\label{prop:annulus_mirror_Xp}
For a nonexceptional integral framing,
\[
{\mathcal{F}_{0,2}^{f}}(\mathbf{Q};X_{1},X_{2})
=-\check{\mathcal{F}}_{0,2}^{f}(\mathbf{q};\widehat{X}_{1},\widehat{X}_{2}).
\]
\end{proposition}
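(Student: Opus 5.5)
We will compare both sides through the genus-zero two-point descendant structure, working with winding coefficients and with $f$ a nonexceptional integral framing. On the A-model side, the annulus potential is the specialization of the unstable $(0,2)$ term of~\eqref{equ:full_torus_open_localization}, with the degree-zero convention of~\cite{LLLZ09}. By the standard genus-zero localization argument, its double winding coefficients can be written through the two-point function
\[
\frac{\langle S^{f}(z_{1})\Phi_{0}^{f}\text{-type insertions}\rangle}{z_{1}+z_{2}}.
\]
Concretely, we expect
\[
[X_{1}^{w_{1}}X_{2}^{w_{2}}]\mathcal{F}^{f}_{0,2}
=N_{0,0,(w_{1})}^{f}N_{0,0,(w_{2})}^{f}w_{1}^{2}w_{2}^{2}\,
\frac{\bigl\langle S^{f}(1/w_{1})\Phi_{0}^{f},S^{f}(1/w_{2})\Phi_{0}^{f}\bigr\rangle_{f}-\langle\Phi_0^f,\Phi_0^f\rangle\text{-correction}}{1/w_{1}+1/w_{2}}\cdot\frac{1}{w_1w_2}.
\]
This follows from the WDVV/string identity $\langle S(z_1)a,S(z_2)b\rangle = \langle a,b\rangle$ up to the two-point descendant $V(z_1,z_2)$, i.e.
\[
V(z_{1},z_{2})=\frac{\langle S(z_{1})a,S(z_{2})b\rangle-\langle a,b\rangle}{z_{1}+z_{2}}
\]
in Givental's formalism. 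Using $V_w^f$ from~\eqref{equ:A_model_open_input_expansion}, the A-model annulus becomes an explicit bilinear expression in $V_{w_1}^f,V_{w_2}^f$ divided by $w_1+w_2$. The degree-zero normalization is fixed by matching at $\mathbf{Q}=0$.

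On the B-model side, we use the identity valid for spectral curves associated with semisimple CohFTs (cf.~\cite{DOSS14,Eyn14}):
\[
B(z_{1},z_{2})-\frac{\rd x(z_1)\,\rd x(z_2)\,\text{-part}}{\cdots}
\]
Equivalently, we express $(\rho\times\rho)^*B - \frac{\rd U_1\rd U_2}{(U_1-U_2)^2}$ through the leaves $\rd\zeta^{\bar\beta}_k$ and the $R$-matrix. The key formula to establish is
\[
\Bigl(\widehat{X}_1\partial_{\widehat X_1}+\widehat X_2\partial_{\widehat X_2}\Bigr)\check{\mathcal F}^f_{0,2}
=\bigl\langle \mathcal{U}^{f}_{\rB}(\widehat X_1,0),\,\mathcal{U}^{f}_{\rB}(\widehat X_2,0)\bigr\rangle_f .
\]
Here the pairing is taken in the normalized canonical basis, and we would prove it directly on $\bbP^1$. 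The B-model quantity $\sum_\beta \rd\zeta^{\bar\beta}(z_1)\rd\zeta^{\bar\beta}(z_2)$ is expressible via the Bergman kernel, since $\rd\zeta^{\bar\alpha}=\rd\frac{1}{z-a_\alpha}/\sqrt{x_f''(a_\alpha)}$. Applying the operator $(\rd x_1)^{-1}\rd_1+(\rd x_2)^{-1}\rd_2$ to $\log\frac{z_1-z_2}{\widehat X_1-\widehat X_2}$ then yields a rational function with poles only at $a_0,a_1$. This function matches $\sum_\alpha \frac{1}{x_f''(a_\alpha)}\frac{1}{(z_1-a_\alpha)(z_2-a_\alpha)}$ by a residue computation, since both sides vanish at infinity and share the same principal parts.

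With this formula in hand, we take the $\widehat{X}_1^{w_1}\widehat X_2^{w_2}$ coefficient. The left side gives $(w_1+w_2)$ times the annulus coefficient. The right side gives $\langle W^{\rB}_{w_1},W^{\rB}_{w_2}\rangle_f$. Substituting~\eqref{equ:vector_winding_identity} and $\widehat X_i=-X_i/(1-\mathbf q)^{p+1}$ turns it into $\langle V^f_{w_1},V^f_{w_2}\rangle_f$. On the A-side, the same coefficient is, via the two-point formula and the string equation applied to $(w_1\partial+w_2\partial)$, equal to $\langle V_{w_1}^f,V_{w_2}^f\rangle_f$ minus the $\mathbf Q$-independent term. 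The overall sign $-1$ comes from the factor $(-1)^{g-1+n}=-1$ and from the sign conventions of the $z\leftrightarrow -z$ change already seen in Proposition~\ref{prop:open_leaf_identity_Xp}.

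The main obstacle is the precise bookkeeping of the degree-zero (unstable) term and of signs: the A-model uses the LLLZ unstable convention, while the B-model subtracts $\rd U_1\rd U_2/(U_1-U_2)^2$. We would settle this by checking both sides at $\mathbf q=0$, where they reduce to the known framed-vertex annulus of $\bbC^3$ (\cite{FL13}), and then rely on the $\mathbf q$-derivation argument above for positive degree.
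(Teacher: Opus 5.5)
Your architecture matches the paper's: apply $X_1\partial_{X_1}+X_2\partial_{X_2}=\widehat X_1\partial_{\widehat X_1}+\widehat X_2\partial_{\widehat X_2}$, use the two-point descendant formula on the A-side so that $1/(w_1+w_2)$ cancels, prove a bilinear identity for the B-model annulus by comparing poles on $\bbP^1$, and match the two sides via $(-1)^{w}W_w^{\rB}/(1-\mathbf q)^{(p+1)w}=V_w^f$. The gap is the sign of your key B-model formula, and that sign is exactly the nontrivial content of the statement. Write $\partial_{x_i}$ for $\widehat X_i\partial_{\widehat X_i}$ and carry out the computation you describe. First, $(\partial_{x_1}+\partial_{x_2})\log(\widehat X_1-\widehat X_2)=1$. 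Next, $1/x_f'(z)=z/\gamma_f+c_0+\sum_\alpha x_f''(a_\alpha)^{-1}(z-a_\alpha)^{-1}$, and $\frac{1}{z_1-z_2}\bigl(\frac{1}{z_1-a}-\frac{1}{z_2-a}\bigr)=-\frac{1}{(z_1-a)(z_2-a)}$. Together these give
\[
\Bigl(\partial_{x_1}+\partial_{x_2}\Bigr)\log\frac{z_1-z_2}{\widehat X_1-\widehat X_2}
=\frac{1}{\gamma_f}-1-\sum_{\alpha=0}^{1}\frac{1}{x_f''(a_\alpha)(z_1-a_\alpha)(z_2-a_\alpha)}.
\]
So the principal parts enter with a minus sign. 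Also, the function does not vanish at infinity in general. That constant, like the single-variable terms, is cancelled by the boundary terms of $\int_0^{\widehat X_1}\int_0^{\widehat X_2}$ and is invisible in coefficients with $w_1,w_2\geq 1$. The correct identity is
\[
\bigl(X_1\partial_{X_1}+X_2\partial_{X_2}\bigr)\check{\mathcal F}^f_{0,2}
=-\sum_{\beta}\zeta_0^{\bar\beta}(\widehat X_1)\,\zeta_0^{\bar\beta}(\widehat X_2).
\]
The paper obtains it from $(\rd_{x_1}+\rd_{x_2})\bigl(B/\rd x_1\rd x_2\bigr)\rd x_1\rd x_2=-\sum_\beta\rd\zeta^{\bar\beta}(z_1)\rd\zeta^{\bar\beta}(z_2)$.

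With your sign, the rest of your argument gives $+\langle V_{w_1}^f,V_{w_2}^f\rangle_f$ on both sides, i.e.\ $\mathcal F^f_{0,2}=+\check{\mathcal F}^f_{0,2}$, which contradicts the statement. The sources you cite for the $-1$ do not supply it. The factor $(-1)^{g-1+n}$ is the conclusion, not an input. No $R$-matrix or $\psi$-class enters this comparison: only the $\fu^0$-parts $\zeta_0^{\bar\beta}$ and $S^f(1/w)\Phi_0^f$ appear, so the $z\leftrightarrow -z$ convention from the leg comparison plays no role. With the minus sign restored, the B-side gives $-\langle V_{w_1}^f,V_{w_2}^f\rangle_f$ against $+\langle V_{w_1}^f,V_{w_2}^f\rangle_f$ on the A-side, which is the claimed sign. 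For degree zero you need not compare separately with the $\bbC^3$ annulus. Under the LLLZ unstable convention the same Euler identity already holds at $\mathbf Q^0$, because $S^f|_{\mathbf Q=0}=\mathrm{id}$ and $\langle\Phi_0^f,\Phi_0^f\rangle_f=-f(f+1)$. This is how the paper absorbs the $\langle a,b\rangle$ term of the two-point formula.
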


\begin{proof}
We compare the two annulus potentials after applying $X_{1}\partial_{X_{1}}+X_{2}\partial_{X_{2}}$.
For the B-model calculation, since $\widehat{X}_i=-X_i/(1-\mathbf{q})^{p+1}$, we have
\(
X_i\partial_{X_i}
=\widehat{X}_i\partial_{\widehat{X}_i},
\)
$i=1,2$.
We use the identity
\[
\left(\rd_{x_{1}}+\rd_{x_{2}}\right)
\left(\frac{B(z_{1},z_{2})}{\rd x_{1}\rd x_{2}}\right)
\rd x_{1}\rd x_{2}
=-\sum_{\beta=0}^{1}
\rd\zeta^{\bar{\beta}}(z_{1})\rd\zeta^{\bar{\beta}}(z_{2}).
\]
To verify this identity, we compare the poles on both sides, which can occur only along the diagonal or at $z_i=a_\beta$, $i=1,2$.
The polar part of $B/(\rd x_{1}\rd x_{2})$ along the diagonal is $(x_{1}-x_{2})^{-2}$, which is annihilated by
$\partial_{x_{1}}+\partial_{x_{2}}$. At $z_{1}=a_{\beta}$, the polar part of the left-hand side is
\[
-\frac{\rd z_{1}\rd z_{2}}
{x_{f}''(a_{\beta})(z_{1}-a_{\beta})^{2}(z_{2}-a_{\beta})^{2}},
\]
which agrees with that of the right-hand side.
Thus the difference between the two sides is a holomorphic bidifferential on $\bbP^{1}\times\bbP^{1}$, and hence vanishes.
Integrating the identity therefore gives
\[
\left(X_{1}\partial_{X_{1}}+X_{2}\partial_{X_{2}}\right)
\check{\mathcal{F}}_{0,2}^{f}(\mathbf{q};\widehat{X}_{1},\widehat{X}_{2})
=-\sum_{\beta}
\zeta_{0}^{\bar{\beta}}(\widehat{X}_{1})\zeta_{0}^{\bar{\beta}}(\widehat{X}_{2}).
\]

On the A-model side, we apply the two-point descendant formula of \cite[Proposition~2.1 and Remark~2.3]{GT13} to the twisted GW theory of $\bbP^{1}$, specializing its two formal variables to $1/a$ and $1/b$.
Multiplying by the disk factors and applying $X_{1}\partial_{X_{1}}+X_{2}\partial_{X_{2}}$ cancels the factor $1/(a+b)$, {so~\eqref{equ:formal_relative_localization} gives}
\[
\left(X_{1}\partial_{X_{1}}+X_{2}\partial_{X_{2}}\right)
{\mathcal{F}_{0,2}^{f}}(\mathbf{Q};X_{1},X_{2})
=\sum_{a,b\geq 1}
\left\langle V_{a}^{f},V_{b}^{f}\right\rangle_{f}
X_{1}^{a}X_{2}^{b}
\]
in positive degree.
In degree zero, the same identity follows from the unstable convention, since $S^{f}(z)|_{\mathbf{Q}=0}=\operatorname{id}$ and $\langle\Phi_{0}^{f},\Phi_{0}^{f}\rangle_{f}=-f(f+1)$.
Comparing these expressions and using \eqref{equ:vector_winding_identity}, we obtain
\[
\left(X_{1}\partial_{X_{1}}+X_{2}\partial_{X_{2}}\right)
{\mathcal{F}_{0,2}^{f}}(\mathbf{Q};X_{1},X_{2})
=-\left(X_{1}\partial_{X_{1}}+X_{2}\partial_{X_{2}}\right)
\check{\mathcal{F}}_{0,2}^{f}(\mathbf{q};\widehat{X}_{1},\widehat{X}_{2}).
\]
Since $X_{1}\partial_{X_{1}}+X_{2}\partial_{X_{2}}$ multiplies the coefficient of $X_{1}^{a}X_{2}^{b}$ by $a+b\neq 0$ for $a,b\geq 1$, the result follows.
\end{proof}

The disk and annulus identities, together with the comparison of the $R$-matrices and leg contributions, give the open mirror symmetry.

\begin{theorem}\label{thm:open_mirror_Xp}
Let $f\in \bbZ$ be a nonexceptional framing.
Under the mirror map \eqref{equ:mirror_map}, we have, for every $g\geq 0$ and $n\geq 1$,
\[
{\mathcal{F}_{g,n}^{f}}(\mathbf{Q};X_{1},\ldots,X_{n})
=(-1)^{g-1+n}\check{\mathcal{F}}_{g,n}^{f}
(\mathbf{q};\widehat{X}_{1},\ldots,\widehat{X}_{n}).
\]
For $(g,n)=(0,1)$, the identity holds for every integer framing.
\end{theorem}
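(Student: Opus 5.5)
The proof splits by $(g,n)$ into the unstable cases and the stable range $2g-2+n>0$. The case $(g,n)=(0,1)$ is Proposition~\ref{prop:disk_mirror_Xp}, valid for every integer framing, and the sign $(-1)^{g-1+n}=1$ matches. The case $(g,n)=(0,2)$ is Proposition~\ref{prop:annulus_mirror_Xp}, with sign $(-1)^{0-1+2}=-1$, as required. It remains to treat $n\geq 1$ with $2g-2+n>0$ and $f$ nonexceptional.

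In the stable range, I start from the A-model graph sum of Proposition~\ref{prop:formal_relative_open_graph_sum}. It expresses $\mathcal{F}^{f}_{g,n}$ as a sum over $\Gamma\in\mathsf{G}_{g,n}$ of $\operatorname{Cont}^{f}_{\Gamma}$, built from three ingredients:
\begin{itemize}
\item the vertex TFT of $\Omega^{f}$;
\item edges and translations determined by $R^{f}(z)$;
\item legs $\mathcal{L}^{f}_{\rA}(\mathbf{Q};X_i,\psi_i)$.
\end{itemize}
On the B-model side, I rewrite \eqref{equ:TR_integrated_open_leaves} as a Givental--Teleman graph sum for $\check{\Omega}^{f}=\check{R}^{f}\cdot\check{T}\cdot\Omega^{\mathrm{pt},2}$. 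The sum $\sum_{k,\beta}\zeta^{\bar\beta}_{k}(\widehat{X}_i)\bar e_\beta\psi_i^{k}$ is exactly $\mathcal{U}^{f}_{\rB}(\mathbf{q};\widehat{X}_i,\psi_i)$, and pairing the CohFT against it produces legs $(\check{R}^{f}(\psi_i))^{-1}\mathcal{U}^{f}_{\rB}=\mathcal{L}^{f}_{\rB}(\mathbf{q};\widehat{X}_i,\psi_i)$. To match the conventions of $\operatorname{Cont}_\Gamma$, I need to check that inserting $\mathcal{U}_\rB$ at a leg of $R\cdot T\cdot\Omega^{\mathrm{pt}}$ yields precisely $R^{-1}(\psi)\mathcal{U}_\rB$, and I need to be careful with duality of the bases $\bar e_\beta$.

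Next I identify the two graph sums term by term under $\fu=-z$, using the following inputs.
\begin{itemize}
\item The vertex TFTs agree via $\iota_f$ in \eqref{equ:state_space_identification}.
\item Edges agree by Proposition~\ref{prop:identification_R}.
\item Translations agree by \eqref{equ:B_translation_Xp} together with Proposition~\ref{prop:identification_R}.
\item Legs agree by Proposition~\ref{prop:open_leaf_identity_Xp}: $\mathcal{L}^{f}_{\rA}(\mathbf{Q};X,z)=\mathcal{L}^{f}_{\rB}(\mathbf{q};\widehat{X},-z)$.
\end{itemize}

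The sign bookkeeping follows the proof of Theorem~\ref{thm:FgAB}.
\begin{itemize}
\item A leg of $\psi$-degree $k$ acquires $(-1)^{k}$.
\item An edge of total degree $k+\ell$ acquires $(-1)^{k+\ell+1}$ and has codimension $k+\ell+1$.
\item A translation of degree $m$ acquires $(-1)^{m-1}$ and has codimension $m-1$.
\end{itemize}
Thus a contribution of total codimension $d$ on $\Mbar_{g,n}$ is multiplied by $(-1)^{d}$. Integration forces $d=3g-3+n$, giving the sign $(-1)^{3g-3+n}=(-1)^{g-1+n}$. With the legs already matched including their $\psi$-degree signs, this is exactly the claimed factor.

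The main obstacle is the careful matching in the sign count. The leg identity absorbs the $(-1)^k$ from the leg $\psi$-powers, so I must be sure not to count them twice. I would recount directly: in $\mathcal{L}_\rB(-\psi)$ the sign comes from the substitution $\fu=-z$ in $\mathcal{L}_\rB$, and it is already encoded in Proposition~\ref{prop:open_leaf_identity_Xp}. The remaining sign then comes only from edge and translation factors together with the parity of leg $\psi$-degrees inside the dimension count. I would verify the resulting exponent $3g-3+n$ in a test case such as $(g,n)=(0,3)$, where the only graph is a single vertex with no edges and the sign must be $-1$. If the naive count disagrees, I would compute $(0,3)$ directly on both sides (a pure-vertex term) to calibrate the convention.
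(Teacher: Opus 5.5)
Your proposal is correct and is essentially the paper's proof: the disk and annulus cases come from Propositions~\ref{prop:disk_mirror_Xp} and~\ref{prop:annulus_mirror_Xp}. In the stable range, the two Givental--Teleman graph sums are matched via Propositions~\ref{prop:identification_R} and~\ref{prop:open_leaf_identity_Xp} together with \eqref{equ:B_translation_Xp}, and the sign count of Theorem~\ref{thm:FgAB} gives $(-1)^{3g-3+n}$.

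One correction to your sanity check: for $(g,n)=(0,3)$ the predicted sign is $(-1)^{0-1+3}=+1$, not $-1$. This agrees with the direct computation, since on $\Mbar_{0,3}$ only the $\psi$-free term with leg vectors $\mathcal{L}^{f}_{\rA}(\mathbf{Q};X_i,0)=\mathcal{L}^{f}_{\rB}(\mathbf{q};\widehat{X}_i,0)$ survives. There is also no double counting: each leg's $(-1)^{k}$ enters exactly once, through Proposition~\ref{prop:open_leaf_identity_Xp}, as part of the total $(-1)^{\mathrm{codim}}$.
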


\begin{proof}
The disk and annulus cases are Propositions~\ref{prop:disk_mirror_Xp} and~\ref{prop:annulus_mirror_Xp}.
For $2g-2+n>0$, Proposition~\ref{prop:formal_relative_open_graph_sum} gives the A-model graph sum.
Applying the graph expansion of $\check{\Omega}^f$ to \eqref{equ:TR_integrated_open_leaves} gives the B-model graph sum.
By Proposition~\ref{prop:open_leaf_identity_Xp}, we have
$\mathcal{L}_{\rB}^{f}(\mathbf{q};\widehat{X},z)
=\mathcal{L}_{\rA}^{f}(\mathbf{Q};X,-z)$.
The same argument as in the proof of Theorem~\ref{thm:FgAB} then gives the sign
$(-1)^{3g-3+n}=(-1)^{g-1+n}$,
since $\dim\Mbar_{g,n}=3g-3+n$.
\end{proof}

\section{Polynomial properties of Gromov--Witten potentials}
\label{sec:proof_main}

In this section, we prove polynomiality results for the closed and open potentials. 
We first establish the required properties of the $R$-matrix and then apply the reconstruction formula.

\subsection{Properties of the \texorpdfstring{$R$}{R}-matrix}
\label{subsec:R_properties_used_later}

{
In this subsection, we deduce some properties of the $R$-matrix that will be used to prove polynomiality of the closed and open GW potentials.
By Proposition~\ref{prop:identification_R}, we have $R^f(z)=\check{R}^f(-z)$, and thus these properties hold equivalently for both the A-model and the B-model $R$-matrices.

We introduce the following notations:
\begin{equation}
R\coloneqq R^{f_{\star}},
\qquad
L\coloneqq\left(\frac{1-\mathbf{q}}{1-p^{2}\mathbf{q}}\right)^{1/2}
=1+O(\mathbf{q}).
\end{equation}
Recall here that $f_{\star}=2/(p-1)$.
We expand $R(z)=\sum_{k\geq 0}R_kz^k$.}
For $d\geq 0$, we denote by $\bbQ[p^{2}][L^{2}]_{\leq d}$ the polynomials in $L^{2}$ of degree at most $d$ with coefficients in $\bbQ[p^{2}]$.

\begin{proposition}\label{prop:R_properties}
{The following properties hold.}
\begin{enumerate}
\item[{(i)}]
Let $f$ be nonexceptional.
Then each coefficient of {$R^{f}$} has a finite limit as $\mathbf{q}\to\infty$.
If $f\neq f_{\star}$, the same holds as $\mathbf{q}\to 1$.
\item[{(ii)}]
{At $f=f_{\star}$, we have}
\[
{R_{0}^{\ 1}(z) = R_{0}^{\ 0}(-z).}
\]
\item[{(iii)}]
{At $f=f_{\star}$, for $k\in\bbZ_{\geq 0}$ and $\alpha=0,1$, we have}
\[
(R_{k})_{i}^{\ \alpha}\in
\frac{\bbQ[p^{2}][L^{2}]_{\leq 2k}}{(p^{2}-1)^{k}\,f_{\star}^{k-i}L^{k-i}},
\qquad i=0,1.
\]
\end{enumerate}
\end{proposition}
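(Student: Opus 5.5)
The plan is to work entirely with the quantum differential equation \eqref{equ:QDE_R0b} for the components $R^{f}_{0\bar\alpha}$ and $R^{f}_{1\bar\alpha}=(zD+L^{f}_{\alpha})R^{f}_{0\bar\alpha}$. Expanding $R^f(z)=\sum_k R^f_k z^k$, the QDE becomes a recursion expressing $R^f_{k+1}$ in terms of $D R^f_k$ and $R^f_k$, divided by differences of eigenvalues $L_0^f-L_1^f$. Equivalently, using the symplectic condition and the standard Givental recursion $[R_{k+1},\mathrm{diag}(L)] = (D + \text{connection terms})R_k$, each off-diagonal entry of $R_{k+1}$ is determined algebraically. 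The diagonal entries are determined up to an integration constant by a differential equation in $D$, and that constant is fixed by the initial value $\mathcal{R}^f_\alpha$ at $\mathbf{q}=0$. All three parts will be read off from this recursion after we make the algebraic structure of $L^f_\alpha$ and $\Delta^f_\alpha$ explicit.

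For (i), I would rewrite everything in terms of the critical points of the spectral curve, using \eqref{equ:eigenvalue_critical_point_Xp} and \eqref{equ:Delta_f}. As $\mathbf{q}\to\infty$ or $\mathbf{q}\to 1$, the entries of \eqref{equ:quantum_product_H} have finite limits. The discriminant of the characteristic polynomial at the limit is nonzero provided $f$ is nonexceptional, and at $\mathbf{q}=1$ also provided $f\neq f_\star$. At $\mathbf{q}=1$, $A_f=(2-(p-1)f)^2/(4(1-p^2))$ and $B_f=p(2-(p-1)f)/(1-p^2)$, so the eigenvalues collide exactly when $2-(p-1)f=0$. Hence $L_\alpha^f$ and $\Delta_\alpha^f$ are finite and separated near these points.

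The operator $D=\frac{1-\mathbf{q}}{1-p^2\mathbf{q}}\mathbf{q}\partial_{\mathbf{q}}$ vanishes at $\mathbf{q}=1$ and tends to a logarithmic derivative $p^{-2}\mathbf{q}\partial_\mathbf{q}$ at infinity. So $D$ applied to a function with a finite limit (holomorphic in $1/\mathbf{q}$, resp. $\mathbf{q}-1$) again has a finite limit. The recursion then only needs to preserve holomorphy in the local parameter, which it does because the divisions are by $L_0-L_1\neq0$.

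The delicate point is the diagonal integration constants. I would argue via the B-model: the Laplace integrals \eqref{equ:Rmat_TR} on $\mathcal{C}_f$ degenerate continuously when the curve stays nondegenerate. Alternatively, the diagonal part of $D\log R_k$ is an explicit algebraic expression, and its primitive has at most logarithmic terms. One then checks that these vanish, using that the residue at $\mathbf{q}=1$ or $\infty$ of the integrand is zero, since $D$ already kills a pole there. I expect this to be the main obstacle.

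For (ii) and (iii), set $f=f_\star$, so that $2-(p-1)f=0$. Then $B_f=0$, $A_f=f_\star^2(1-\mathbf{q})/(4(1-p^2\mathbf{q}))=f_\star^2L^2/4$, and hence $L^{f_\star}_\alpha=(-1)^\alpha f_\star L/2$. By \eqref{equ:Delta_f}, $\Delta_\alpha^{f_\star}=-(-1)^\alpha\frac{(p^2-1)f_\star^3}{4}L$. The QDE for $R_{0\bar\alpha}$ therefore becomes $[(zD+L_\alpha)^2-f_\star^2L^2/4]R_{0\bar\alpha}=0$. This equation is invariant under $(z,\alpha)\mapsto(-z,1-\alpha)$, since $L_1=-L_0$. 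Checking the initial data, $\mathcal{R}^{f_\star}_1(z)=\mathcal{R}^{f_\star}_0(-z)$, using the explicit form of $N^{f_\star}_{m,\alpha}$: $1-(p-1)f_\star=-1$, $pf_\star-1=f_\star+1$, so $N_{m,1}=(-f_\star)^{-m}+(-1)^{-m}+(f_\star+1)^{-m}$, matching $N_{m,0}$ with $m$ odd flipped in sign. Together with the normalization factor $\Delta^{1/2}$, this gives (ii) by uniqueness.

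For (iii), I would write $R_{0}^{\ \alpha}=\sum_k c_k(L) z^k$ and substitute into the QDE. Here $D$ acts on $L$ as $DL=\frac{(p^2-1)\mathbf{q}}{2(1-p^2\mathbf{q})}L\cdot\frac{\cdots}{\cdots}$, which is a polynomial in $L^2$ times $L/(p^2-1)$. Explicitly, $\mathbf{q}=(1-L^2)/(1-p^2L^2)$ gives $DL=\frac{(1-L^2)(1-p^2L^2)}{2(p^2-1)}L$. The recursion then reads $f_\star L\,c_{k+1}\sim D c_k + \ldots$. An induction on $k$ shows that $c_k\in \mathbb{Q}[p^2][L^2]_{\le 2k}/((p^2-1)^kf_\star^kL^k)$, with each step gaining one factor $1/((p^2-1)f_\star L)$ and raising the $L^2$-degree by at most $2$. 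The $i=1$ component then follows from $R_{1}^{\ \alpha}=(zD+L_\alpha)R_0^{\ \alpha}$, which gains a factor $f_\star L$ and explains the exponent $k-i$.
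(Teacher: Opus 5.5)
Your argument for (ii) is the paper's. It uses the invariance of the $f_{\star}$-equation under $(\alpha,z)\mapsto(1-\alpha,-z)$, the identity $N^{f_{\star}}_{m,1}=-N^{f_{\star}}_{m,0}$ for odd $m$, and the constant ratio $\Delta^{f_{\star}}_{1}/\Delta^{f_{\star}}_{0}=-1$.

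For (i), the first route you list (the Laplace integrals stay nondegenerate in the limit) is also the paper's route. Its substance lies in the choice of coordinates, which you leave open. As $\mathbf{q}\to\infty$, one critical point runs into $z=0$ and must be followed in $\zeta=\mathbf{q}z$. As $\mathbf{q}\to1$, one critical point runs into the logarithmic point $z=1$ and must be followed in $\zeta=(z-1)/(1-\mathbf{q})$. The limits there are $1/\gamma_f$ and $1$, which are distinct iff $f\neq f_{\star}$. Your A-model alternative can also be made rigorous, but the reason is not a residue computation. Let $\Psi$ denote the change from the flat to the normalized canonical frame. The equation governing the diagonal entries has a right-hand side built from $\Psi^{-1}D\Psi$, which carries the same vanishing factor as $D$ at $\mathbf{q}=1$ and $\mathbf{q}=\infty$. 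Dividing by that factor leaves a holomorphic equation, so no logarithm appears.

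The genuine gap is in (iii). At $f=f_{\star}$ one has $L_\alpha^{2}=f_{\star}^{2}L^{2}/4$, so the zeroth-order term of the operator in \eqref{equ:QDE_R0b} cancels. The order-$z^{k+1}$ part reads $\bigl(2L_\alpha D+(DL_\alpha)\bigr)[z^{k}]R_{0\bar\alpha}=-D^{2}[z^{k-1}]R_{0\bar\alpha}$. This is a first-order differential equation for the new coefficient, not an algebraic recursion of the form $f_{\star}L\,c_{k+1}\sim Dc_k+\cdots$. In the normalization $(R_k)_0^{\ \alpha}=r_k(L)/((-1)^\alpha f_{\star})^{k}$, it becomes
\[
r_k'=-2Ar_{k-1}''-2A'r_{k-1}'+\Bigl(\frac{A'}{L}-\frac{A}{2L^{2}}\Bigr)r_{k-1},
\]
where $A=(L^{2}-1)(p^{2}L^{2}-1)/(4(p^{2}-1))$, $D=2LA\partial_L$, and $D\log\Delta_\alpha^{-1/2}=-A$ is used. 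Redone for this recursion, your degree count only yields $r_k'\in\bbQ[p^{2}][L^{2}]_{\leq 2k}/((p^{2}-1)^{k}L^{k+1})$.

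Passing to $r_k$ needs two arguments that your proposal lacks.
\begin{enumerate}
\item[(a)] No $\log L$ may appear. For even $k$, the bound on $r_k'$ allows an $L^{-1}$ term. The paper excludes it because $r_k$ is an algebraic function of $L$, by Proposition~\ref{prop:identification_R} and the integral definition \eqref{equ:Rmat_TR}.
\item[(b)] The integration constant must be controlled, and it is not harmless. For odd $k$, any nonzero constant would violate $L^{k}r_k\in\bbQ[p^{2}][L^{2}]$. The paper kills it using $r_k(-L)=(-1)^{k}r_k(L)$, which follows from (ii) since $L\mapsto-L$ swaps $L_0$ and $L_1$. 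For even $k$, the constant is fixed by $r_k(1)=[z^{k}]\mathcal{R}^{f_{\star}}_{0}(f_{\star}z)$. This lies in $(p^{2}-1)^{-k}\bbQ[p^{2}]$ because $f_{\star}^{m}N^{f_{\star}}_{m,0}=1+\bigl(\tfrac{2}{p-1}\bigr)^{m}-\bigl(\tfrac{2}{p+1}\bigr)^{m}$ for odd $m$.
\end{enumerate}
Without these steps the induction does not close.

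A minor point: for $i=1$ the correct relation is $(R_k)_1^{\ \alpha}=(D-A)(R_{k-1})_0^{\ \alpha}+L_\alpha(R_k)_0^{\ \alpha}$. The $-A$ comes from the $\Delta_\alpha^{1/2}$ normalization, though it does not change the bounds.
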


\begin{proof}
{We first prove (i) on the B-model side.}
As $\mathbf{q}\to\infty$, the two critical points tend to $0$ and $pf/\gamma_f$, where $\gamma_f$ is defined in \eqref{equ:gammaf}.
We use the coordinate $\zeta=\mathbf{q}z$ near the former and the coordinate $z$ near the latter.
For $f\neq f_{\star}$, as $\mathbf{q}\to1$, the critical points tend to $1/\gamma_f$ and $1$.
These limits are distinct, since $\gamma_f-1=(p-1)(f-f_{\star})\neq0$.
We use the coordinate $z$ near the former and $\zeta=(z-1)/(1-\mathbf{q})$ near the latter.

In both cases, $x_f-x_f(a_\alpha)$ and $\rd y$ extend holomorphically in these coordinates, and the limiting critical points are nondegenerate.
The integral representation~\eqref{equ:Rcheck_initial_integral} then shows that each coefficient of $\check{R}^{f}_{0\bar{\alpha}}$ extends holomorphically and hence has a finite limit.
Equation~\eqref{equ:Rcheck_QDE} gives the same conclusion for $\check{R}^{f}_{1\bar{\alpha}}$.
{Since the normalized canonical basis extends holomorphically and remains a basis at the limit, each coefficient of $\check{R}^{f}$ has a finite limit. Proposition~\ref{prop:identification_R} then gives (i) for $R^{f}$.}

{To prove (ii), we note that at $f=f_{\star}$ the tangent weights at the two fixed points specialize to}
\[
(f_{\star},1,-f_{\star}-1),
\qquad
(-f_{\star},-1,f_{\star}+1).
\]
Note that the two triples are negatives of one another. Hence $N^{f_{\star}}_{2m-1,1}=-N^{f_{\star}}_{2m-1,0}$ and $\mathcal{R}_{1}^{f_{\star}}(z)=\mathcal{R}_{0}^{f_{\star}}(-z)$.
{Since $B_{f_{\star}}=0$ and $L_{1}^{f_{\star}}=-L_{0}^{f_{\star}}$, the QDE \eqref{equ:QDE_R0b} is invariant under $(\alpha,z)\mapsto(1-\alpha,-z)$.
Moreover, Equation~\eqref{equ:Delta_f} gives $\Delta_{1}^{f_{\star}}=-\Delta_{0}^{f_{\star}}$, so the ratio of their chosen square roots is independent of $\mathbf{q}$. Uniqueness of the solution with the initial conditions determined by $\mathcal{R}_{\alpha}^{f_{\star}}$ therefore gives
\[
R_{0\bar{1}}(z)=\frac{(\Delta_{0}^{f_{\star}})^{1/2}}
{(\Delta_{1}^{f_{\star}})^{1/2}}R_{0\bar{0}}(-z).
\]
Multiplying by $(\Delta_{1}^{f_{\star}})^{1/2}$ and using $R_{0}^{\ \alpha}=(\Delta_{\alpha}^{f_{\star}})^{1/2}R_{0\bar{\alpha}}$ then proves (ii).}

{Finally, we prove (iii). At this framing, the coefficient of $H$ in $H\ast_f H$ vanishes.}
The corresponding canonical idempotents and pairing are given by
\[
e_{\alpha}=\frac{1}{2}\left(\mathbf{1}+\frac{2(-1)^{\alpha}}{f_{\star}L}H\right),\qquad \Delta_{\alpha}=(-1)^{\alpha}\frac{(1-p^{2})f_{\star}^{3}L}{4},\qquad \alpha=0,1.
\]
Equation \eqref{equ:QDE_R0b} then becomes
\[
\left((zD+L_{\alpha})^{2}-\frac{f_{\star}^{2}L^{2}}{4}\right)R_{0\bar{\alpha}}=0,
\qquad L_{\alpha}=(-1)^{\alpha}\frac{f_{\star}L}{2}.
\]

We first prove the case $i=0$. {By (ii), we may write}
\[
(R_{k})_{0}^{\ \alpha}
=\frac{r_{k}(L)}{((-1)^{\alpha}f_{\star})^{k}},
\qquad r_{0}=1.
\]
To express $D$ in the variable $L$, we set
\[
A(L)\coloneqq\frac{(L^{2}-1)(p^{2}L^{2}-1)}{4(p^{2}-1)},
\qquad D=2LA(L)\partial_{L}.
\]
Using $D\log\Delta_{\alpha}^{-1/2}=-A(L)$, we expand the QDE to obtain the recursion
\[
r_{k}'=-2A r_{k-1}''-2A'r_{k-1}'
+\left(\frac{A'}{L}-\frac{A}{2L^{2}}\right)r_{k-1}
\]
for $k\geq 1$, where primes denote differentiation with respect to $L$.

We induct on $k$, starting from $r_{0}=1$.
Under the inductive hypothesis, the recursion implies
\[
r_{k}'\in
\frac{\bbQ[p^{2}][L^{2}]_{\leq 2k}}{(p^{2}-1)^{k}L^{k+1}}.
\]
From Proposition~\ref{prop:identification_R} and \eqref{equ:Rmat_TR}, we see that $r_{k}$ is an algebraic function of $L$.
Thus integrating the recurrence produces no logarithmic term, and $r_{k}$ is a Laurent polynomial in $L$ up to an additive constant.
For odd $k$, this constant vanishes: $L\mapsto-L$ interchanges $L_{0}$ and $L_{1}$, and $r_{k}(-L)=(-1)^{k}r_{k}(L)$ by {(ii)}.
For even $k$, the formula for $\mathcal{R}_{0}^{f_{\star}}$ gives $r_{k}(1)=[z^{k}]\mathcal{R}_{0}^{f_{\star}}(f_{\star}z)\in(p^{2}-1)^{-k}\bbQ[p^{2}]$.
Thus
\[
    r_{k}\in\frac{\bbQ[p^{2}][L^{2}]_{\leq 2k}}{(p^{2}-1)^{k}L^{k}}.
\]

For $i=1$, comparing coefficients in $R_{1\bar{\alpha}}=(zD+L_{\alpha})R_{0\bar{\alpha}}$ gives
\[
(R_{k})_{1}^{\ \alpha}
=(D-A)(R_{k-1})_{0}^{\ \alpha}
+(-1)^{\alpha}\frac{f_{\star}L}{2}(R_{k})_{0}^{\ \alpha},
\qquad k\geq 1.
\]
The desired conclusion then follows from the case $i=0$ for $k\geq 1$,
and from $(R_0)_1^{\ \alpha}=L_{\alpha}$ for $k=0$.
\end{proof}

For the reconstruction formula, it is convenient to rewrite this polynomiality in terms of
\[ \Delta = \frac{1}{L^{2}} = \frac{1-p^{2}\mathbf{q}}{1-\mathbf{q}}. \]
We use $\bbQ[p^{2}][\Delta]_{\leq d}$ analogously for polynomials of
degree at most $d$ in $\Delta$.
In terms of $\Delta$, {Proposition~\ref{prop:R_properties}(iii)} reads, for $k\geq 0$,
\begin{equation}\label{equ:Rpoly_K}
 (R_{k})_{i}^{\ \alpha} \in \frac{1}{(p^{2}-1)^{k} \, f_{\star}^{k-i} \Delta^{(3k+i)/2}} \, \bbQ[p^{2}][\Delta]_{\leq 2k},\quad i=0,1.
\end{equation}

\subsection{Polynomiality of the closed potentials}

{Since the closed GW potentials $\mathcal{F}_{g}$ for $g\geq 2$ are independent of $f$, we apply the reconstruction formula to $\Omega^{\star}\coloneqq\Omega^{f_{\star}}$.}
The polynomiality in \eqref{equ:Rpoly_K} yields the following result.

\begin{theorem}\label{thm:Omega}
    For $2g-2+n>0$, $d\in\bbZ_{\geq 0}$, and
    $a_{1},\ldots,a_{n}\in\{0,1\}$, we have
    \[ \left[ \Omega_{g,n}^{\star}(H^{a_{1}},\dots,H^{a_{n}}) \right]_{d} \in \frac{f_{\star}^{3g-3-d+\sum_{i=1}^{n} a_{i}}}{(p^{2}-1)^{d-(g-1)}\, \Delta^{\frac{1}{2}\left( g-1+3d+\sum_{i=1}^{n} a_{i} \right)}} \, \bbQ[p^{2}][\Delta]_{\leq 2d} \otimes H^{2d}(\Mbar_{g,n},\bbQ), \]
    where $[\cdot]_{d}$ denotes the component in
    $H^{2d}(\Mbar_{g,n},\bbQ)$.
\end{theorem}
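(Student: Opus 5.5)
The plan is to expand $\Omega^{\star}_{g,n}(H^{a_1},\dots,H^{a_n})$ with the Givental--Teleman formula $\Omega^{\star}=R\cdot T_R\cdot\Omega^{\mathrm{pt},2}$ of Section~\ref{subsec:GT_reconstruction}. I will check that every building block of a graph contribution lies in a space of the form
\[
\frac{f_{\star}^{a}}{(p^{2}-1)^{b}\Delta^{c}}\,\bbQ[p^{2}][\Delta]_{\le e},
\]
and then add up the exponents $(a,b,c,e)$ over the vertices, edges, legs and translation points of a stable graph $\Gamma$. The input is \eqref{equ:Rpoly_K} together with the explicit data at $f=f_{\star}$:
\[
e_\alpha=\tfrac12\bigl(\mathbf 1+\tfrac{2(-1)^\alpha}{f_\star L}H\bigr),\qquad
\Delta_\alpha=(-1)^\alpha\frac{(1-p^2)f_\star^3L}{4},\qquad L=\Delta^{-1/2}.
\]

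\textbf{Step 1 (normalization of all pieces).} By the symplectic condition, $R^{-1}(z)=R^{*}(-z)$. Hence leg factors $R^{-1}(\psi)H^{a}$, expressed in the normalized basis $\bar e_\alpha$, have coefficients $\pm (R_k)_{a}{}^{\alpha}\Delta_\alpha^{-1/2}$. Edge factors are quadratic expressions $\sum_\alpha (R_k)\,(R_\ell)$ in the same components, divided by $\psi+\psi'$; only the coefficient in front of $\psi^k\psi'^\ell$ with total codimension $k+\ell+1$ matters. Translation factors come from $T_R(z)=z(\bar{\mathbf 1}-R^{-1}(z)\mathbf 1)$, with coefficients $(R_m)_0{}^\alpha\Delta_\alpha^{-1/2}$ in codimension $m-1$. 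Vertex factors are $\Delta_\alpha^{(2g_v-2+n_v)/2}$. Plugging in \eqref{equ:Rpoly_K} and $\Delta_\alpha^{\pm1/2}\propto \bigl(f_\star^{3}(p^2-1)\bigr)^{\pm1/2}\Delta^{\mp1/4}$, each piece gets an explicit weight. For example, a leg of degree $k$ with insertion $H^{a}$ lies in
\[
f_\star^{a-k}(p^2-1)^{-k}\Delta^{-(3k+a)/2}\cdot\Delta_\alpha^{-1/2}\,\bbQ[p^2][\Delta]_{\le 2k},
\]
and similarly for edges and translations.

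\textbf{Step 2 (additivity in codimension).} Once all factors carry their weights, the exponent of $(p^2-1)^{-1}$, the exponent of $f_\star^{-1}$, the exponent of $\Delta^{-3/2}$ and the $\Delta$-degree bound $2k$ will each turn out to be additive in the $\psi$-degree (codimension) contributed. Summed over $\Gamma$, these degrees give the total codimension $d$. The leftover factors from $\Delta_\alpha^{\pm1/2}$ combine over the whole graph. Using $\sum_v(2g_v-2+n_v)=2g-2+n$, counting over all half-edges, legs and translation points, these leftovers should produce exactly the $g-1$ in the exponents $f_\star^{3(g-1)}(p^2-1)^{g-1}\Delta^{-(g-1)/2}$. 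The legs contribute $f_\star^{a_i}\Delta^{-a_i/2}$. Finally, pushforward along gluing and forgetful maps and integration of $\psi$-classes only introduce rational numbers, so the class lies in $\bbQ\otimes H^{2d}(\Mbar_{g,n},\bbQ)$ with the stated prefactor. The degree bound $\le 2d$ in $\Delta$ follows because the degrees add.

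\textbf{Main obstacle.} The delicate point is the exact exponent bookkeeping, because pieces contributing codimension $k$ come in several shapes. Edges contribute codimension $k+\ell+1$ but involve $R_k,R_\ell$ and a pairing sum $\sum_\alpha$. Translation terms contribute codimension $m-1$ but involve $R_m$ and an extra factor $z$. One has to check that the mismatches by one (the $+1$ and $-1$ shifts) are exactly absorbed by the $\Delta_\alpha^{\pm1/2}$ factors attached to half-edges, so that each unit of codimension costs precisely $f_\star^{-1}(p^2-1)^{-1}\Delta^{-3/2}$ and at most degree $2$ in $\Delta$.

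For edges, I would first rewrite the numerator $\sum_\alpha e_\alpha\otimes e^\alpha-R^{-1}\otimes R^{-1}$ using $R^{-1}(z)R^{*}$-type identities, so that the edge contribution is $\sum_{k+\ell=j}$ of products of $(R_{k'})$-components with $k'\le j+1$. The component with $k'=j+1$ supplies the extra unit of weight. A parity argument using Proposition~\ref{prop:R_properties}(ii), via $\alpha\mapsto1-\alpha$ and $L\mapsto-L$, should ensure that only integral powers of $\Delta$ survive after summing over $\alpha$. This makes the half-integer exponents consistent with the claim.
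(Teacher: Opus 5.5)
Your overall strategy is exactly the paper's: expand $\Omega^{\star}$ by Givental--Teleman, weigh each factor using \eqref{equ:Rpoly_K}, and close with an Euler-characteristic count. The gap is in the step you single out as the main obstacle. You misdiagnose it, and the mechanism you propose for it would not work. A discrepancy of one full unit $f_\star^{-1}(p^2-1)^{-1}\Delta^{-3/2}$ cannot be absorbed by factors $\Delta_\alpha^{\pm1/2}$, since these carry weight $(f_\star^{3}(p^2-1))^{\pm1/2}\Delta^{\mp1/4}$. They simply cancel against $\Delta_\alpha^{(2g_v-2+n_v)/2}$ at each vertex, leaving $\Delta_\alpha^{g_v-1}\propto (f_\star^3(p^2-1))^{g_v-1}\Delta^{-(g_v-1)/2}$. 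This is why the paper works in the basis $e_\alpha$ from the start, with leg factors $R_{a_i}^{\ \alpha}(-\psi_i)$ and vertex factors $\Delta_\alpha^{g_v-1}$.

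In fact there is no $\pm1$ mismatch to absorb. Two of your counts are off.
\begin{itemize}
\item \emph{Translations.} $T(z)=z(\mathbf 1-R^{-1}(z)\mathbf 1)$ puts $(R_m)_0^{\ \alpha}$ in front of $\psi^{m+1}$. After pushforward along the forgetful map it therefore sits in codimension $m$, not $m-1$.
\item \emph{Edges.} The numerator is divisible by $\psi_1+\psi_2$. Hence the $\psi_1^k\psi_2^\ell$ coefficient of the quotient is a $\bbZ$-linear combination of numerator coefficients of total degree $m=k+\ell+1$. These are the terms $\eta^{01}(R_{k'})_i^{\ \alpha}(R_{\ell'})_{1-i}^{\ \beta}$ with $k'+\ell'=m$: all of them, not one distinguished component with $k'=j+1$. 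By \eqref{equ:Rpoly_K}, each lies in $f_\star^{3-m}(p^2-1)^{1-m}\Delta^{-(3m+1)/2}\bbQ[p^2][\Delta]_{\le 2m}$, and each contributes codimension $m$ after gluing.
\end{itemize}

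So legs, translations and edges all cost exactly $f_\star^{-1}(p^2-1)^{-1}\Delta^{-3/2}$ and $\Delta$-degree $2$ per unit of codimension. There are only two surpluses:
\begin{itemize}
\item $f_\star^{a_i}\Delta^{-a_i/2}$ on each leg;
\item $f_\star^{3}(p^2-1)\Delta^{-1/2}$ on each edge, coming from $\eta^{01}=(1-p^2)f_\star^2/4$ and the index $1$ on one $R$-factor.
\end{itemize}
The edge surplus combines with the vertex factors $\Delta_\alpha^{g_v-1}$ through $\sum_v(g_v-1)+|E(\Gamma)|=g-1$ to give $f_\star^{3g-3}(p^2-1)^{g-1}\Delta^{-(g-1)/2}$, which is precisely the stated prefactor.

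Finally, no parity argument is needed. The target space already allows the half-integral power $\Delta^{-\frac12(g-1+3d+\sum_i a_i)}$. The vanishing you anticipate for odd exponent does hold, by the $L\mapsto -L$ symmetry swapping the two labels, but it plays no role in proving the statement.
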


\begin{proof}
For a genus-$g$, $n$-pointed stable graph $\Gamma$, we consider the vertex, leg, edge, and translation contributions in the reconstruction formula.

\begin{itemize}
\item \emph{Vertices.}
A vertex $v$ with label $\alpha$ contributes
\[
\Delta_{\alpha}^{g_{v}-1}
=(-1)^{\alpha(g_{v}-1)}
\frac{2^{2-2g_{v}}(1-p^{2})^{g_{v}-1}f_{\star}^{3g_{v}-3}}
{\Delta^{(g_{v}-1)/2}}.
\]

\item \emph{Legs.}
The $i$-th leg contributes $R_{a_{i}}^{\ \alpha}(-\psi_{i})$.

\item \emph{Translation.}
Each additional marked point introduced by the translation contributes $\psi\bigl(1-R_{0}^{\ \alpha}(-\psi)\bigr)$.
The coefficient of $\psi^{k+1}$ is $(-1)^{k+1}(R_{k})_{0}^{\ \alpha}$ for $k\geq 1$.

\item \emph{Edges.}
An edge joining vertices with labels $\alpha,\beta$ contributes
\[
\frac{
\delta_{\alpha\beta}\Delta_{\alpha}
-\sum_{i,j=0}^{1}\eta^{ij}
R_{i}^{\ \alpha}(-\psi_{1})R_{j}^{\ \beta}(-\psi_{2})
}{\psi_{1}+\psi_{2}}.
\]
Here the inverse Gram matrix in the flat basis $\{\mathbf{1},H\}$ satisfies $\eta^{00}=\eta^{11}=0$ and $\eta^{01}=\eta^{10}=\frac{(1-p^{2})f_{\star}^{2}}{4}$.
The symplectic condition makes the numerator divisible by $\psi_{1}+\psi_{2}$.
For $k,\ell\geq 0$, the coefficient of $\psi_1^k\psi_2^\ell$ is therefore a linear combination of the coefficients of total degree $k+\ell+1$ in the numerator.
\end{itemize}

Combining \eqref{equ:Rpoly_K} with $\sum_{v\in V(\Gamma)}(g_v-1)+|E(\Gamma)|=g-1$ gives the powers of $f_{\star}$, $\Delta$, and $p^2-1$ in the statement, with numerator in $\bbQ[p^2][\Delta]_{\leq 2d}$. Summing over the labels and stable graphs then proves the theorem.
\end{proof}

Integrating over $\Mbar_g$ and using framing independence and regularity at $\mathbf{q}=1$, we obtain the following result.

\begin{theorem}\label{thm:FgXp}
For $g\geq 2$, the GW potential $\mathcal{F}_{g}$ is the expansion
of a rational function
\begin{equation} 
	\mathcal{F}_{g} = (-1)^{g-1}\frac{\calP_{g}(\Delta,p^{2})}{\Delta^{5g-5}}, 
\end{equation}
where $\calP_{g}(\Delta,p^{2}) \in (p^{2}-1)^{-(2g-2)}\,
\bbQ[p^{2}][\Delta]_{\leq 5g-5}$.
\end{theorem}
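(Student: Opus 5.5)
The plan is to specialize Theorem~\ref{thm:Omega} to $n=0$ and top degree, and then to use regularity at $\mathbf{q}=1$ to cut the degree in $\Delta$ down from $6g-6$ to $5g-5$.

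\emph{Step 1: framing independence.} For $g\geq 2$, Section~\ref{subsec:closed_GW} gives $\mathcal{F}_{g}=\int_{\Mbar_{g}}\Omega^{f}_{g,0}$, where $f$ is a formal parameter. The left-hand side, $\sum_d N_{g,d}\ee^{-dt}$, does not depend on $f$. So this is an identity of rational functions in $f$, coefficientwise in $\mathbf{Q}$. I will evaluate it at any nonexceptional value. The value $f_{\star}=2/(p-1)$ is nonexceptional, since $f_{\star}+1\neq0$, $1-f_{\star}(p-1)=-1$ and $1-pf_{\star}=-(p+1)/(p-1)$. Hence $\mathcal{F}_{g}=\int_{\Mbar_{g}}\Omega^{\star}_{g,0}$.

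\emph{Step 2: the bound from Theorem~\ref{thm:Omega}.} Take $n=0$ and $d=3g-3=\dim\Mbar_{g}$ in Theorem~\ref{thm:Omega}. The power of $f_{\star}$ is $3g-3-d=0$. The power of $(p^{2}-1)$ in the denominator is $d-(g-1)=2g-2$. The power of $\Delta$ in the denominator is $\tfrac12(g-1+9g-9)=5g-5$. Integrating over $\Mbar_g$, whose top intersection numbers are rational, gives
\[
\mathcal{F}_{g}=\frac{\widetilde{\calP}_{g}}{\Delta^{5g-5}},\qquad \widetilde{\calP}_{g}\in(p^{2}-1)^{-(2g-2)}\bbQ[p^{2}][\Delta]_{\leq 6g-6}.
\]
Here $\Delta=(1-p^{2}\mathbf{q})/(1-\mathbf{q})$ is a power series in $\mathbf{q}$. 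Through the mirror map, this identity expresses $\mathcal{F}_g$ as the expansion of a rational function. Setting $\calP_{g}\coloneqq(-1)^{g-1}\widetilde{\calP}_{g}$ matches the stated sign convention, and the only thing left is the degree bound.

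\emph{Step 3: the degree bound via $\mathbf{q}\to1$, i.e.\ $\Delta\to\infty$.} The rational function $\widetilde{\calP}_{g}/\Delta^{5g-5}$ has degree at most $6g-6-(5g-5)=g-1$ in $\Delta$. It therefore suffices to show that $\mathcal{F}_{g}$ stays bounded as $\mathbf{q}\to1$. This cannot be done at $f_{\star}$, where Proposition~\ref{prop:R_properties}(i) gives no information. Instead I will use Step 1 again and compute $\mathcal{F}_g$ at a generic nonexceptional $f\neq f_{\star}$, viewed as a function of $\mathbf{q}$ on a path to $1$, for example via analytic continuation of the B-model graph sum.

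In the Givental--Teleman graph sum for $\Omega^{f}$ on $\Mbar_g$ there are finitely many graphs, and each $\psi$-degree is bounded. I will check that each ingredient has a finite limit as $\mathbf{q}\to1$:
\begin{itemize}
\item[(a)] the coefficients of $R^{f}$, which is Proposition~\ref{prop:R_properties}(i);
\item[(b)] the TFT data: the eigenvalues $L^{f}_{\alpha}$, and $\Delta^{f}_{\alpha}$ together with its inverse $(\Delta^{f}_{\alpha})^{-1}=y'(a_\alpha)^2/x_f''(a_\alpha)$;
\item[(c)] the normalized canonical frame and the inverse pairing in the flat basis, which enter the edge and translation terms.
\end{itemize}
I expect the main obstacle to be (b) and (c), namely finiteness and nondegeneracy of the vertex factors $\Delta_{\alpha}^{g_v-1}$ with $g_v=0$ in the limit. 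I would verify it by the same argument as in the proof of Proposition~\ref{prop:R_properties}(i). In the rescaled coordinates $z$ and $\zeta=(z-1)/(1-\mathbf{q})$, the two critical points converge to distinct nondegenerate ones, namely $1/\gamma_f$ and $1$, and $x_f-x_f(a_\alpha)$ and $\rd y$ extend holomorphically there. This should make $y'(a_\alpha)^2/x_f''(a_\alpha)$ finite and nonzero. Alternatively, the explicit formula~\eqref{equ:Delta_f} in terms of the finite limits of $L_\alpha^f$ can be checked directly.

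Granting this, $\mathcal{F}_{g}$ stays bounded as $\Delta\to\infty$, so $\deg_\Delta\widetilde{\calP}_{g}\leq5g-5$. This gives the theorem.
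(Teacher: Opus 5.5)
Your proposal is correct and follows the paper's proof. You apply Theorem~\ref{thm:Omega} at $n=0$, $d=3g-3$ to get the $\Delta^{-(5g-5)}$ form with numerator of degree at most $6g-6$. You then use framing independence together with Proposition~\ref{prop:R_properties}(i) at a nonexceptional $f\neq f_{\star}$ to get regularity at $\mathbf{q}=1$ (i.e.\ $\Delta\to\infty$), which cuts the degree to $5g-5$. Your additional checks are things the paper leaves implicit in that step: finite nonzero limits of $\Delta^{f}_{\alpha}$, distinct limits of $L^{f}_{\alpha}$, and analytic continuation along a path to $\mathbf{q}=1$.
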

\begin{proof}
Applying Theorem~\ref{thm:Omega} with $d=\dim\Mbar_{g}=3g-3$ and $n=0$ gives
\[ \mathcal{F}_{g} \in \frac{1}{(p^{2}-1)^{2g-2} \Delta^{5g-5}}\, \bbQ[p^{2}][\Delta]_{\leq 6g-6}. \]
Framing independence allows us to compute $\mathcal{F}_{g}$ by reconstruction at a nonexceptional $f\neq f_{\star}$.
{Proposition~\ref{prop:R_properties}(i) then implies that it is regular at $\mathbf{q}=1$.}
Since $\Delta\to\infty$ there, the degree of the numerator is at most $5g-5$.
\end{proof}

\subsection{Polynomiality of the open potentials}
\label{subsec:open_polynomiality}

We now study the polynomial structure of the fixed-winding coefficients of the open potentials:
\[
\mathcal{F}^{f}_{g,\boldsymbol{\mu}}(\mathbf{q})
:=[X_{1}^{\mu_{1}}\cdots X_{n}^{\mu_{n}}]
{\mathcal{F}^{f}_{g,n}}(\mathbf{Q};X_{1},\ldots,X_{n}),
\]
where $\mathbf{Q}=\mathbf{q}(1-\mathbf{q})^{p^{2}-1}$.
Using~\eqref{equ:open_vertex_gluing} and~\eqref{equ:open_potential_recovery}, we extend this definition to formal $f$.
For each fixed integer \(p\geq2\), we first establish Laurent polynomiality in \(\Delta\) at \(f=f_\star\) using the reconstruction formula. 
We then extend this result to arbitrary \(f\) by the cut-and-join equation. 
In the final step, B-model regularity and growth estimates yield polynomiality in \(\mathbf q\) after multiplication by explicit factors, with a degree bound, 
while the topological vertex formula establishes polynomial dependence of the coefficients on \(p\) and \(f\).

\begin{proposition}
\label{prop:open_fstar_polynomiality}
{For $n>0$, $2g-2+n>0$, and $\boldsymbol{\mu}\in\bbZ_{>0}^{n}$, we have}
\[
\mathcal{F}^{f_{\star}}_{g,\boldsymbol{\mu}} \in \frac{1}{\Delta^{5g-5+2n}}\bbQ[\Delta].
\]
\end{proposition}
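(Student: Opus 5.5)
The plan is to run the Givental--Teleman graph sum at $f=f_{\star}$ and count powers of $\Delta$, exactly as in Theorem~\ref{thm:Omega}, now with nontrivial open legs. Since $f_{\star}=2/(p-1)$ need not be an integer, I will first justify using the graph sum at this framing. By Theorem~\ref{thm:open_mirror_Xp} and Proposition~\ref{prop:formal_relative_open_graph_sum}, each coefficient $\mathcal{F}^{f}_{g,\boldsymbol{\mu}}$ equals, at nonexceptional integral $f$, a graph sum whose ingredients ($R^{f}$, $\Delta^{f}_{\alpha}$, $V^{f}_{w}$) are rational in $f$. The topological vertex formula~\eqref{equ:open_vertex_gluing} defines $\mathcal{F}^{f}_{g,\boldsymbol{\mu}}$ for formal $f$, with coefficients in $\mathbf{Q}$ polynomial in $f$. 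Two rational functions of $f$ agreeing at infinitely many integers coincide, so the graph sum computes $\mathcal{F}^{f_{\star}}_{g,\boldsymbol{\mu}}$, provided $f_{\star}$ is nonexceptional. This holds because $1-f_{\star}(p-1)=-1\neq 0$, and the remaining factors of the nonexceptionality condition are checked similarly.

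Next I analyze the leg factor at fixed winding. By~\eqref{equ:A_model_open_input_expansion} and~\eqref{equ:zeta_winding_coefficients}, the coefficient of $X^{w}$ in $\mathcal{L}_{\rA}(X,\psi)$ is $R(\psi)^{-1}V_{w}\,(1-w\psi)^{-1}$, with $V_{w}=V^{f_{\star}}_{w}$. I expand $V_{w}$ in the flat basis. Its pairing with $\mathbf{1}$ is $w^{2}N_{0,0,(w)}(1-\mathbf{q})^{-(p+1)w}P_{w}(\mathbf{q})$ by~\eqref{equ:B_disk_normalized_solution}. Its pairing with $H$ is obtained by applying $f_{\star}/2+D/w$ to the first pairing. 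I then rewrite these in $\Delta$ using
\[
\mathbf{q}=\frac{1-\Delta}{p^{2}-\Delta},\qquad 1-\mathbf{q}=\frac{p^{2}-1}{p^{2}-\Delta},\qquad D=2LA(L)\partial_{L}.
\]
In this form they are rational in $\Delta$, with poles only at $\Delta=p^{2}$, i.e.\ at $\mathbf{q}=\infty$. Combining with~\eqref{equ:Rpoly_K}, the half-integer powers of $\Delta$ coming from $L$ and from $\Delta_{\alpha}$ should pair up: each $\psi^{k}$ and each component $i$ contributes $\Delta^{-(3k+i)/2}$, the canonical coefficients carry $(f_{\star}L)^{\mp1}$, and the vertices carry $\Delta^{-(g_{v}-1)/2}$.

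The counting then goes as follows. The total $\psi$-degree is $3g-3+n$, contributing at most $\Delta^{-3(3g-3+n)/2}$. The vertex and edge factors contribute $\Delta^{-(g-1)/2}$, and the leg components contribute an extra $\Delta^{-n/2}$. Against this, the numerators lie in $\bbQ[\Delta]_{\leq 2d}$, so they can only raise the power of $\Delta$. The net pole order at $\Delta=0$ should come out to $5g-5+2n$, and integrality of the exponent should follow from the parity (ii) of Proposition~\ref{prop:R_properties}, as in the closed case. This shows
\[
\mathcal{F}^{f_{\star}}_{g,\boldsymbol{\mu}}\in\Delta^{-(5g-5+2n)}\,\bbQ[\Delta]\bigl[(p^{2}-\Delta)^{-1}\bigr].
\]

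The main obstacle is removing the possible poles at $\Delta=p^{2}$. For this I will use Proposition~\ref{prop:R_properties}(i): $R$ has a finite limit as $\mathbf{q}\to\infty$, and the normalized canonical basis and the vertex factors $\Delta_{\alpha}$ stay finite there. It then suffices to show that the A-model leg vector $V_{w}$, as a function of $\mathbf{q}$, is regular at $\mathbf{q}=\infty$. Here $P_{w}$ has degree $\le w$ in $\mathbf{q}$, while $(1-\mathbf{q})^{-(p+1)w}$ decays at infinity, so $\langle\mathbf{1},V_{w}\rangle$ vanishes at $\mathbf{q}=\infty$. The operator $D$ preserves regularity there, and hence so does $\langle H,V_{w}\rangle$. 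If this pointwise argument fails because of cancellations in the $\Delta$-bookkeeping, I would instead argue directly on the B-model: the differentials $\omega^{f_{\star}}_{g,n}$, expanded at $\widehat{X}=0$, have coefficients holomorphic as $\mathbf{q}\to\infty$ in the coordinates used in the proof of Proposition~\ref{prop:R_properties}(i), and the substitution $X=-(1-\mathbf{q})^{p+1}\widehat{X}$ only introduces factors regular at $\mathbf{q}=\infty$ after dividing by $(1-\mathbf{q})^{(p+1)|\boldsymbol{\mu}|}$.
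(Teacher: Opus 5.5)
Your overall route is the paper's: specialize the ancestor/graph-sum formula to $f_{\star}$ by rationality in $f$, write the winding-$w$ insertion $V_w$ in the flat basis, and count poles with Theorem~\ref{thm:Omega}. The gap is the sentence claiming that, in $\Delta$, the two pairings of $V_w$ are ``rational in $\Delta$, with poles only at $\Delta=p^{2}$.'' You do not prove this, and it hides the one nontrivial point. In the variable $\Delta$,
\[
D=-\frac{(1-\Delta)(p^{2}-\Delta)}{(p^{2}-1)\Delta}\,\partial_{\Delta},
\]
so $\langle H,V_w\rangle=(f_{\star}/2+D/w)A_w$ acquires a simple pole at $\Delta=0$, the conifold $\mathbf{q}=p^{-2}$, unless $\partial_{\Delta}A_w|_{\Delta=0}=0$.

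This is not cosmetic. Since $\eta^{00}=0$, the $\mathbf{1}$-component of $V_w$ equals $\eta^{01}\langle H,V_w\rangle$. An $a_i=0$ insertion lowers the exponent in Theorem~\ref{thm:Omega} by only $1/2$, so a $\Delta^{-1}$ in that component would raise the bound by $1/2$ per leg, to $5g-5+5n/2$. Your count reaches $5g-5+2n$ only because it silently assumes $V_w$ is regular at $\Delta=0$.

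The paper closes this gap by evaluating the disk equation \eqref{equ:disk_Pw_differential_equation} at $\mathbf{q}=p^{-2}$, where the $P''$ coefficient vanishes. The equation reduces to $\frac{p^{2}-1}{p^{2}}P'+(p+1)wP=0$, which is exactly $\partial_{\mathbf{q}}\bigl((1-\mathbf{q})^{-(p+1)w}P\bigr)=0$ there, i.e.\ $\partial_{\Delta}A_w|_{\Delta=0}=0$. Alternatively, you could use the B-model: $\langle H,W_w^{\rB}\rangle=[\widehat{X}^{w}]\bigl(\varphi_1(\rho(\widehat{X}))-\varphi_1(0)\bigr)$ is a polynomial in $\mathbf{q}$, because $N_f(0)=1$ and $\rho$ has polynomial coefficients.

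Conversely, what you call the main obstacle, $\Delta=p^{2}$, is not one. Substituting $\mathbf{q}=(1-\Delta)/(p^{2}-\Delta)$, the factor
\[
(1-\mathbf{q})^{-(p+1)w}=\bigl((p^{2}-\Delta)/(p^{2}-1)\bigr)^{(p+1)w}
\]
absorbs the $(p^{2}-\Delta)^{-j}$, $j\le w$, coming from $\mathbf{q}^{j}$, so $A_w\in\bbQ[\Delta]$ outright. Moreover $D$ has no pole at $\Delta=p^{2}$, and the $R$-matrix and vertex factors are Laurent polynomials in $L$ by Proposition~\ref{prop:R_properties}(iii). So neither Proposition~\ref{prop:R_properties}(i) nor the B-model fallback is needed there. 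The place that actually requires an argument is $\Delta=0$.
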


\begin{proof}
{
For nonexceptional integral $f$, the ancestor--descendant relation and~\eqref{equ:formal_relative_localization} give}
\begin{equation}\label{eqn:F-cohft}
\mathcal{F}^{f}_{g,\boldsymbol{\mu}}
=\sum_{k_{1},\ldots,k_{n}\geq 0}
\left(\prod_{i=1}^{n}\mu_{i}^{k_{i}}\right)
\int_{\Mbar_{g,n}}
\Omega^{f}_{g,n}
(V_{\mu_{1}}^{f},\ldots,V_{\mu_{n}}^{f})
\prod_{i=1}^{n}\psi_{i}^{k_{i}}.
\end{equation}
Here $V_{\mu_i}^f$ are the vectors defined in \eqref{equ:A_model_open_input_expansion} and their coefficients are rational in $f$ and regular at $f=f_{\star}$.
Since both sides are coefficientwise rational in $f$ and agree at infinitely many integral values, the identity extends to formal $f$ and specializes to $f=f_{\star}$.

{We show that the vectors {$V_w^{f_\star}$} have polynomial coefficients in $\Delta$ in the flat basis.}
For {$w\geq 1$}, Proposition~\ref{prop:disk_mirror_Xp} and \eqref{equ:B_disk_lagrange_coefficients} imply
\[
A_{w}(\mathbf{q})
\coloneqq\left\langle\mathbf{1},{V_{w}^{f_{\star}}}\right\rangle_{f_{\star}}
=-(1-\mathbf{q})^{-(p+1)w}
\sum_{j=0}^{w} C_{w,j}^{f_{\star}}\mathbf{q}^{j}.
\]
Substituting $\mathbf{q}=(1-\Delta)/(p^{2}-\Delta)$, we obtain {$A_w\in\bbQ[\Delta]$}.
The divisor equation gives the other pairing as
{\[
\left\langle H,{V_{w}^{f_{\star}}}\right\rangle_{f_{\star}}
=\left(\frac{f_{\star}}{2}+\frac{D}{w}\right)A_{w}.
\]}
Evaluating \eqref{equ:disk_Pw_differential_equation} at $f=f_{\star}$ and $\mathbf{q}=p^{-2}$ gives {$\partial_{\Delta}A_w|_{\Delta=0}=0$}.
Together with
\[
D
=-\frac{(1-\Delta)(p^{2}-\Delta)}
{(p^{2}-1)\Delta}\frac{\partial}{\partial\Delta},
\]
this implies {$DA_{w}\in\bbQ[\Delta]$}, and hence
{$\langle H,{V_{w}^{f_{\star}}}\rangle_{f_{\star}}\in\bbQ[\Delta]$}.
Since the pairing is independent of $\mathbf{q}$ and nondegenerate,
the coefficients of {${V_w^{f_\star}}$} in the flat basis belong to $\bbQ[\Delta]$.

Only the component in $H^{2d}(\Mbar_{g,n})$, where $d=3g-3+n-\sum_i k_i\geq0$, contributes to~\eqref{eqn:F-cohft}.
Expanding the insertions in the flat basis and applying Theorem~\ref{thm:Omega}, we find that the pole order at $\Delta=0$ is at most
\[
\frac{g-1+3d+n}{2}
=5g-5+2n-\frac{3}{2}\sum_i k_i
\leq 5g-5+2n. \qedhere
\]
\end{proof}

{We now extend Proposition~\ref{prop:open_fstar_polynomiality} to formal $f$.}

\begin{proposition}\label{prop:open_framing_filtration}
{For formal $f$, the following statements hold.
\begin{enumerate}
\item[(i)]
For positive integers $w,w_{1},w_{2}$, we have
\[
\mathcal{F}^{f}_{0,(w)},\quad \mathcal{F}^{f}_{0,(w_{1},w_{2})}\in\bbQ[f,\Delta].
\]
\item[(ii)]
For $n>0$ and $2g-2+n>0$, we have
\[
\mathcal{F}^{f}_{g,\boldsymbol{\mu}} \in \frac{1}{\Delta^{5g-5+2n}} \bbQ[f,\Delta].
\]
\end{enumerate}}
\end{proposition}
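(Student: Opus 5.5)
The plan is to pass from $f=f_\star$ to formal $f$ using the cut-and-join equation for framing dependence, which follows from the topological vertex formula~\eqref{equ:open_vertex_gluing}. In \eqref{equ:open_vertex_gluing}, $f$ enters only through $\mathsf{q}^{f\kappa_\rho/2}$. The operator $\kappa_\rho$ on $s_\rho(\mathbf{p})$ is the cut-and-join operator
\[
\mathcal{D}=\frac12\sum_{i,j\geq1}\Bigl(ij\,p_{i+j}\partial_{p_i}\partial_{p_j}+(i+j)\,p_ip_j\partial_{p_{i+j}}\Bigr).
\]
Hence $\partial_f Z^f=\frac{\hbar}{2}\mathcal{D}Z^f$, and therefore
\[
\partial_f F^f=\frac{\hbar}{2}\Bigl(\mathcal{D}F^f+\tfrac12\sum_{i,j\ge 1} ij\,p_{i+j}\,\partial_{p_i}F^f\,\partial_{p_j}F^f\Bigr).
\]
Extracting coefficients via \eqref{equ:open_potential_recovery}, $\partial_f\mathcal{F}^f_{g,\boldsymbol{\mu}}$ becomes a finite combination of three types of terms:
\begin{itemize}
\item cut terms $\mathcal{F}^f_{g,\boldsymbol{\mu}'}$ with $n+1$ parts and the same $|\boldsymbol{\mu}|$;
\item join terms $\mathcal{F}^f_{g,\boldsymbol{\mu}''}$ with $n-1$ parts;
\item products $\mathcal{F}^f_{g_1,\boldsymbol{\mu}_1}\mathcal{F}^f_{g_2,\boldsymbol{\mu}_2}$.
\end{itemize}
In all of these terms the quantity $2g-2+n$ drops by one, and the total winding is preserved. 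I will check that the explicit signs $(-1)^{g-1}$ and the $\hbar$ powers match up so that only these $(g,n)$ appear. Note also that the closed part $Z_{\mathrm{cl},>0}$ is independent of $f$ and drops out.

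The argument is then a double induction, on $2g-2+n$ and, within fixed $(g,n)$, on nothing more, since the recursion strictly lowers $2g-2+n$. Each coefficient $\mathcal{F}^f_{g,\boldsymbol{\mu}}$, as a function of $f$, is a polynomial in $f$ for fixed $\mathbf{Q}$-degree: it comes from the vertex formula with $\mathsf{q}^{f\kappa/2}$ expanded in $\hbar$. So we may integrate in $f$ starting at $f=f_\star$:
\[
\mathcal{F}^f_{g,\boldsymbol{\mu}}=\mathcal{F}^{f_\star}_{g,\boldsymbol{\mu}}+\int_{f_\star}^f(\text{RHS})\,\rd f.
\]
If the right-hand side lies in $\Delta^{-(5g-5+2n)}\bbQ[f,\Delta]$, then so does the integral. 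Together with Proposition~\ref{prop:open_fstar_polynomiality}, this gives (ii). For this I need the pole orders of the right-hand-side terms to be bounded by $5g-5+2n$:
\begin{itemize}
\item a cut term has $(g,n+1)$, but it comes from the $\hbar$-lowering, so its genus is $g-1$. Its pole order is $5(g-1)-5+2(n+1)=5g-8+2n$.
\item a join term $(g,n-1)$ has pole order $5g-7+2n$.
\item a product has pole order $5g_1-5+2n_1+5g_2-5+2n_2=5g-10+2n+2$.
\end{itemize}
When a factor is unstable, I use (i) instead, where that factor has pole order $0$. All of these are at most $5g-5+2n$, which is good.

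Part (i) is the base case. The disk case is explicit: by Proposition~\ref{prop:disk_mirror_Xp} and \eqref{equ:B_disk_lagrange_coefficients}, the disk coefficient equals $N^f_{0,0,(w)}(1-\mathbf{q})^{-(p+1)w}\sum_j C^f_{w,j}\mathbf{q}^j$ up to sign. This quantity is polynomial in $f$ since the binomials are polynomial in $f$. Expressed in $\Delta$, we need $(1-\mathbf{q})^{-(p+1)w}\mathbf{q}^j$ for $j\le w$. Using $1-\mathbf{q}=(p^2-1)/(p^2-\Delta)$ and $\mathbf{q}=(1-\Delta)/(p^2-\Delta)$, each such term is a polynomial in $\Delta$ because $(p+1)w\geq j$.

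For the annulus, the $(0,2)$ cut-and-join relation expresses $\partial_f\mathcal{F}^f_{0,(w_1,w_2)}$ through disks only:
\begin{itemize}
\item a join term $\mathcal{F}_{0,(w_1+w_2)}$;
\item a product of two disks.
\end{itemize}
It is therefore polynomial in $f,\Delta$. I still need the annulus at $f=f_\star$ to lie in $\bbQ[\Delta]$. For this I use the $(X_1\partial_{X_1}+X_2\partial_{X_2})$ formula from Proposition~\ref{prop:annulus_mirror_Xp}, $\sum\langle V_a,V_b\rangle$, together with the fact, shown in the proof of Proposition~\ref{prop:open_fstar_polynomiality}, that $V_w^{f_\star}$ has $\bbQ[\Delta]$ coefficients. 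Dividing by $a+b$ then gives the claim. That formula was proved only for nonexceptional integral $f$, so I extend it via rationality in $f$, exactly as in the earlier proof.

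The main obstacle is bookkeeping the cut-and-join equation precisely: the correct $\hbar$ power, the signs $(-1)^{g-1}$ in \eqref{equ:open_potential_recovery}, and the unstable $(0,1)$ and $(0,2)$ contributions inside the products. A second point needing care is polynomial dependence on $f$ at fixed $\mathbf{Q}$-degree. This is needed to justify integration in $f$ and to ensure that no $f$-denominators appear. It should follow because each $\mathbf{Q}^d$ coefficient involves only finitely many partitions $\rho$ with $|\rho|=|\boldsymbol{\mu}|$, where the $f$-dependence is through $\ee^{f\hbar\kappa_\rho/2}$.
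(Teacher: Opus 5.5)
\textbf{Gap: the induction in (ii) is circular for one class of terms.} Your strategy for (ii) matches the paper's: cut-and-join in $f$, a pole-order count, and integration from $f_\star$ using Proposition~\ref{prop:open_fstar_polynomiality}. The problem is that you build the induction on a false claim.

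It is not true that every term on the right-hand side has strictly smaller $2g-2+n$. In the quadratic term $ij\,p_{i+j}\,\partial_{p_i}F^f\,\partial_{p_j}F^f$, only the \emph{sum} of $2g_a-2+n_a$ over the two factors drops by one. A disk has $2g-2+n=-1$, so a disk factor is paired with a factor of the \emph{same} type $(g,n)$.

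Concretely, $\partial_f\mathcal{F}^f_{g,\boldsymbol{\mu}}$ contains terms proportional to $\mathcal{F}^f_{0,(i)}\,\mathcal{F}^f_{g,\boldsymbol{\nu}}$, where $\boldsymbol{\nu}$ is obtained from $\boldsymbol{\mu}$ by replacing a part $\mu_k=i+j$ by $j$. For instance, for $(g,n)=(1,1)$ the right-hand side consists only of an annulus term and the products $\mathcal{F}^f_{0,(i)}\mathcal{F}^f_{1,(w-i)}$.

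With an induction ``on $2g-2+n$ and nothing more'', you bound these terms using the statement being proved. Your list of pole orders also omits them, yet they are the only terms that can reach order $5g-5+2n$; all the others are at most $5g-7+2n$. The repair is the one the paper uses: within fixed $(g,n)$, induct on $|\boldsymbol{\mu}|$. This works because $|\boldsymbol{\nu}|=|\boldsymbol{\mu}|-i<|\boldsymbol{\mu}|$, and for $\boldsymbol{\mu}=(1,\ldots,1)$ no such term occurs.

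\textbf{The same slip affects your annulus argument.} For $(g,n)=(0,2)$ the quadratic term is disk times annulus, $\mathcal{F}^f_{0,(i)}\,\mathcal{F}^f_{0,(w_1-i,w_2)}$ and its mirror image. It is not a product of two disks: two disks only produce one-part terms, which is the disk's own equation. So $\partial_f\mathcal{F}^f_{0,(w_1,w_2)}$ is not expressed through disks alone, and you need an induction on $w_1+w_2$ here as well.

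\textbf{Comparison of annulus routes.} With that fix, your annulus argument is valid and differs from the paper's.
\begin{itemize}
\item You get the value at $f_\star$ from $\sum_{a,b}\langle V_a^{f_\star},V_b^{f_\star}\rangle_{f_\star}$ and the $\bbQ[\Delta]$-coefficients of $V_w^{f_\star}$, then propagate in $f$.
\item The paper computes the B-model annulus directly by Lagrange inversion, obtaining $[\widehat X_1^{w_1}\widehat X_2^{w_2}]\log\frac{\rho(\widehat X_1)-\rho(\widehat X_2)}{\widehat X_1-\widehat X_2}\in\bbQ[p,f][\mathbf{q}]_{\le w_1+w_2}$. It then transfers this by annulus mirror symmetry and polynomiality in $f$.
\end{itemize}
The paper's route needs no $(0,2)$ cut-and-join and gives an explicit $\mathbf{q}$-degree bound.

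\textbf{Two harmless slips.} First, the disk coefficient is $-\frac{1}{w^2}(1-\mathbf{q})^{-(p+1)w}\sum_j C^f_{w,j}\mathbf{q}^j$, without your extra factor $N^f_{0,0,(w)}$. Second, with your normalization of $\mathcal{D}$ one has $\mathcal{D}s_\rho=\frac{\kappa_\rho}{2}s_\rho$, so $\partial_fZ^f=\hbar\,\mathcal{D}Z^f$. Neither affects the conclusions.
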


\begin{proof}
{We first prove (i).}
For integral $f$, Proposition~\ref{prop:disk_mirror_Xp} and \eqref{equ:B_disk_lagrange_coefficients} give
\[
\mathcal{F}^{f}_{0,(w)}(\mathbf{q}) = -\frac{1}{w^{2}(1-\mathbf{q})^{(p+1)w}} \sum_{j=0}^{w} C_{w,j}^{f}\mathbf{q}^{j}.
\]
For the annulus, Lagrange inversion gives
\[
[\widehat{X}^{m}]\rho(\widehat{X})
=\frac{1}{m}[z^{m-1}]
(1-z)^{(f+1)m}(1-\mathbf{q}z)^{(1-pf)m}
\in\bbQ[p,f][\mathbf{q}]_{\leq m-1}
\]
for $m\geq 1$.
By \eqref{equ:B_disk_annulus_definitions}, the annulus
coefficient is
\[
\begin{aligned}
B_{0,(w_1,w_2)}(p,f,\mathbf{q})
&\coloneqq
[\widehat{X}_1^{w_1}\widehat{X}_2^{w_2}]
\check{\mathcal{F}}_{0,2}^{f}\\
&=
[\widehat{X}_1^{w_1}\widehat{X}_2^{w_2}]
\log\frac{\rho(\widehat{X}_1)-\rho(\widehat{X}_2)}
{\widehat{X}_1-\widehat{X}_2}
\in\bbQ[p,f][\mathbf{q}]_{\leq w_1+w_2}.
\end{aligned}
\]
For nonexceptional integral $f$, applying Proposition~\ref{prop:annulus_mirror_Xp} and the open mirror map, we obtain
\[
(1-\mathbf{q})^{(p+1)(w_{1}+w_{2})}
\mathcal{F}^{f}_{0,(w_{1},w_{2})}(\mathbf{q})
=(-1)^{w_{1}+w_{2}+1}B_{0,(w_{1},w_{2})}(p,f,\mathbf{q}).
\]
By \eqref{equ:open_vertex_gluing} and the explicit formulas above, the coefficients on both sides of the disk and
annulus identities are polynomial in $f$.
Since these identities hold for infinitely many integral framings, they extend to formal $f$.
Substituting $\mathbf{q}=(1-\Delta)/(p^{2}-\Delta)$ shows that both coefficients belong to $\bbQ[f,\Delta]$.

{We now prove (ii) using the cut-and-join equation.}
{
Recall from~\eqref{equ:connected_generating_function} that $F^{f}(\hbar,\mathbf{Q};\mathbf{p})=\log Z^{f}(\hbar,\mathbf{Q};\mathbf{p})$.
}
Since Schur functions are eigenfunctions of the cut-and-join operator (cf. \cite[p.~534]{LLLZ09}), differentiating \eqref{equ:open_vertex_gluing} gives
\begin{equation}
\frac{\partial F^{f}}{\partial f}
=\frac{\hbar}{2}\sum_{i,j\geq 1}\Bigg[
(i+j)p_{i}p_{j}
\frac{\partial F^{f}}{\partial p_{i+j}}
+ij p_{i+j}\left(
\frac{\partial^{2}F^{f}}
{\partial p_{i}\partial p_{j}}
+\frac{\partial F^{f}}{\partial p_{i}}
\frac{\partial F^{f}}{\partial p_{j}}
\right)\Bigg].
\end{equation}
Taking coefficients, we proceed by induction on $2g-2+n$, and then on $|\boldsymbol{\mu}|$.
Every coefficient $\mathcal{F}^f_{g',\boldsymbol{\nu}}$ on the
right-hand side with $2g'-2+\ell(\boldsymbol{\nu})>0$
satisfies $2g'-2+\ell(\boldsymbol{\nu})<2g-2+n$,
unless it is multiplied by a disk coefficient.
In that case, $(g',\ell(\boldsymbol{\nu}))=(g,n)$,
but $|\boldsymbol{\nu}|<|\boldsymbol{\mu}|$.
By induction, the two linear terms have pole orders at $\Delta=0$ at most $5g-7+2n$ and $5g-8+2n$, respectively, provided $2g-2+n>1$.
Products of two coefficients with $2g_{i}-2+n_{i}>0$ for $i=1,2$ have pole order at most $5g-8+2n$, since $g_{1}+g_{2}=g$ and $n_{1}+n_{2}=n+1$.
{By (i),} disk and annulus coefficients are polynomial in $\Delta$, so all remaining terms have pole order at most $5g-5+2n$. Thus
\[
\partial_{f}\mathcal{F}^{f}_{g,\boldsymbol{\mu}} \in\Delta^{-(5g-5+2n)}\bbQ[f,\Delta].
\]
Integrating from $f_{\star}$ and using {Proposition~\ref{prop:open_fstar_polynomiality}} {proves (ii).}
\end{proof}

{To study the polynomiality in $\mathbf{q}$, we first consider the B-model coefficients}
\[
B_{g,\boldsymbol{\mu}}(p,f,\mathbf{q})
\coloneqq
[\widehat{X}_1^{\mu_1}\cdots\widehat{X}_n^{\mu_n}]
\check{\mathcal{F}}_{g,n}^{f}.
\]

\begin{proposition}\label{prop:open_B_properties}
{Let $n>0$, $2g-2+n>0$, and $\boldsymbol{\mu}\in\bbZ_{>0}^{n}$. The B-model coefficients have the following properties.
\begin{enumerate}
\item[(i)] For every nonexceptional $f\neq f_{\star}$, $B_{g,\boldsymbol{\mu}}(p,f,\mathbf{q})$ is regular at $\mathbf{q}=1$.
\item[(ii)] For every nonexceptional integral framing $f$, we have
\[
B_{g,\boldsymbol{\mu}}(p,f,\mathbf{q})
=O(\mathbf{q}^{|\boldsymbol{\mu}|})
\qquad\text{as }\mathbf{q}\to\infty.
\]
\end{enumerate}}
\end{proposition}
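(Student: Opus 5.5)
We will work throughout with the B-model graph expansion~\eqref{equ:TR_integrated_open_leaves}, in which $B_{g,\boldsymbol{\mu}}$ is a finite sum over stable graphs. Each term is built from three ingredients: the $\check R^f$-matrix entries (vertices, edges and translations), the vertex factors $(\Delta^f_\alpha)^{g_v-1}$, and the leg coefficients $[\widehat X^{\mu_i}]\,\mathcal{L}^f_{\rB}(\mathbf q;\widehat X,\fu)$. It therefore suffices to control each ingredient separately as $\mathbf q\to1$ and as $\mathbf q\to\infty$. For a fixed winding $\mu_i$, relation~\eqref{equ:zeta_winding_coefficients} shows $[\widehat X^{w}]\zeta^{\bar\beta}_k=(-w)^k[\widehat X^{w}]\zeta_0^{\bar\beta}$. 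Hence the leg at winding $w$ is determined by the vector $W^{\rB}_w$ together with $(\check R^f)^{-1}$, and the whole problem reduces to the behaviour of $\check R^f$, $\Delta^f_\alpha$, the canonical basis, and $W^{\rB}_w$ at the two limits.

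\emph{Part (i).} For nonexceptional $f\ne f_\star$, Proposition~\ref{prop:R_properties}(i) gives finite limits of every coefficient of $\check R^f$ as $\mathbf q\to1$. Its proof also shows that the critical points tend to the distinct nondegenerate limits $1/\gamma_f$ and $1$. From this we conclude two things. First, $x_f''(a_\alpha)$, and hence $\Delta^f_\alpha=x_f''(a_\alpha)/y'(a_\alpha)^2$, stays finite and nonzero. Second, the normalized canonical basis extends holomorphically. For the legs, $\langle\mathbf 1,W^{\rB}_w\rangle$ is $w^2$ times the disk coefficient, which by~\eqref{equ:B_disk_lagrange_coefficients} is a polynomial in $\mathbf q$. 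Then $\langle H,W^{\rB}_w\rangle=(K_f+D/w)\langle\mathbf 1,W^{\rB}_w\rangle$ by~\eqref{equ:H_pairing_B_leaf}, where $D=\frac{1-\mathbf q}{1-p^2\mathbf q}\mathbf q\partial_{\mathbf q}$ and $K_f$ are both regular at $\mathbf q=1$. The pairing is constant and nondegenerate, so $W^{\rB}_w$ is regular there, and every graph term is regular.

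\emph{Part (ii).} As $\mathbf q\to\infty$, Proposition~\ref{prop:R_properties}(i) again gives finite limits for $\check R^f$. The critical points $a_0\sim$ (limit in $\zeta=\mathbf qz$, so $a_0=O(1/\mathbf q)$) and $a_1\to pf/\gamma_f$ are nondegenerate in the appropriate coordinates. We will then check how $\Delta_\alpha^f$ and the normalized basis scale. Here I expect powers of $\mathbf q$ to appear, for instance $x_f''(a_0)\sim\mathbf q^2$ in the $z$-coordinate. The main task is to show these cancel in the total graph contribution. To do so, I would use the canonical-basis rescaling invariance of the graph sum: a term's dependence on $\Delta_\alpha$ combines vertex factors $\Delta^{g_v-1}$ with $\Delta^{1/2}$ from each leg and edge end, and the total is independent of normalization. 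So we only need the flat-basis components of $\check R^f$ and of $W^{\rB}_w$, which lets us work with $\Omega^f$ in the flat basis $\{\mathbf 1,H\}$, where the pairing is $\mathbf q$-independent. The crucial estimate is then on the legs: we must show that the flat components of $W^{\rB}_w$, and hence the leg factors after applying the bounded $(\check R^f)^{-1}$, are $O(\mathbf q^{w})$. For the disk component, $\sum_j C^f_{w,j}\mathbf q^j$ has degree $w$. For the $H$-component we use~\eqref{equ:H_pairing_B_leaf}, noting that $K_f\to f/2-(p+1)/p^2\cdot0+\dots$ stays bounded and $D\to p^{-2}\mathbf q\partial_{\mathbf q}$ preserves $O(\mathbf q^w)$. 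Multiplying over legs gives $O(\mathbf q^{|\boldsymbol\mu|})$, provided the remaining CohFT data (the flat-basis $\Omega^f$ classes) stay bounded.

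The main obstacle is precisely this boundedness of the flat-basis CohFT at $\mathbf q=\infty$. The $R$-matrix is bounded in the normalized canonical basis, but the change of basis to the flat basis involves $\Delta_\alpha^{\pm1/2}$ and the eigenvalues $L_\alpha^f$, which tend to finite limits since $A_f,B_f$ have finite limits. We need the canonical vectors $e_\alpha$ to stay bounded with $\Delta_\alpha$ bounded away from $0$ and $\infty$. My first attempt is to compute these directly from~\eqref{equ:Delta_f} at the limiting eigenvalues, where the limiting eigenvalues are distinct because the limiting critical points are distinct. If some $\Delta_\alpha$ degenerates, I would fall back on tracking exact powers of $\mathbf q$ through the graph sum using the homogeneity $\sum(g_v-1)+|E|=g-1$, as in Theorem~\ref{thm:Omega}.
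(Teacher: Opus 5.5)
Your proposal is correct and follows the paper's proof essentially step for step. It uses regularity and boundedness of $\check R^f$ from Proposition~\ref{prop:R_properties}(i), bounds $W^{\rB}_w$ via the degree-$w$ disk polynomial, relation~\eqref{equ:H_pairing_B_leaf} and the constant nondegenerate pairing, and then uses~\eqref{equ:zeta_winding_coefficients} with bounded $(\check R^f)^{-1}$ on each leg. The obstacle you flag is already settled in the proof of Proposition~\ref{prop:R_properties}(i), where the normalized canonical basis extends and remains a basis at the limit: $(\Delta^f_\alpha)^{-1}=y'(a_\alpha)^2/x_f''(a_\alpha)$ is coordinate-independent, so the $\mathbf q^2$ you expect in $x_f''(a_0)$ cancels against $y'(a_0)^2$, and the eigenvalue gap tends to $\pm f/p\neq 0$, so no fallback is needed.
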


\begin{proof}
{To prove (i), we use the graph sum for $\check{\mathcal{F}}_{g,n}^{f}$ obtained from~\eqref{equ:TR_integrated_open_leaves} and first examine the vectors $W_{w}^{\rB}$ entering the leg contributions.}
Equation \eqref{equ:B_disk_lagrange_coefficients} gives
\[
\langle\mathbf{1},W_{w}^{\rB}\rangle_{f}
=w^{2}[\widehat{X}^{w}]\check{\mathcal{F}}_{0,1}^{f}
=(-1)^{w+1}\sum_{j=0}^{w} C_{w,j}^{f}\mathbf{q}^{j}.
\]
For the pairing with $H$, we have
\begin{equation}\label{equ:outer_H_pairing_q1}
\langle H,W_{w}^{\rB}\rangle_{f}
=\left(K_{f}+\frac{D}{w}\right)
\langle\mathbf{1},W_{w}^{\rB}\rangle_{f}.
\end{equation}
Since $K_f$ and the coefficient of
$\partial/\partial\mathbf{q}$ in $D$ are regular at $\mathbf{q}=1$, both pairings are regular there.
The pairing is independent of $\mathbf{q}$ and nondegenerate, so $W_w^{\rB}$ is regular at $\mathbf{q}=1$.
{By Propositions~\ref{prop:R_properties}(i) and \ref{prop:identification_R}, each coefficient of $\check{R}^f$ is regular at $\mathbf{q}=1$.
Together with the formulas for the normalized canonical basis, this shows that every factor in the B-model graph sum, and hence $B_{g,\boldsymbol{\mu}}$, is regular there.}

{For (ii), we estimate the same graph sum as $\mathbf{q}\to\infty$, with $f$ a nonexceptional integral framing. For $w\geq1$, the expression above for $\langle\mathbf{1},W_w^{\rB}\rangle_f$ is polynomial in $\mathbf{q}$ of degree at most $w$, and hence is $O(\mathbf{q}^{w})$.}
In the coordinate $\xi=\mathbf{q}^{-1}$, we have
\[
D=-\frac{\xi-1}{\xi-p^{2}}\,
\xi\frac{\partial}{\partial\xi},
\qquad K_{f}=\frac{f}{2}+\frac{p+1}{\xi-p^{2}}.
\]
The coefficients in these expressions are regular at $\xi=0$,
and $\xi\partial_{\xi}$ preserves $\xi^{-w}\mathbb{C}[\![\xi]\!]$.
Equation~\eqref{equ:outer_H_pairing_q1} therefore implies $\langle H,W_{w}^{\rB}\rangle_{f}=O(\mathbf{q}^{w})$.
The pairing is independent of $\mathbf{q}$ and nondegenerate for nonexceptional $f$, so these two pairings imply $W_{w}^{\rB}=O(\mathbf{q}^{w})$.

Since the change from the flat basis to the normalized canonical basis is $O(1)$, the components $[\widehat{X}^{w}]\zeta_{0}^{\bar{\beta}}$ of $W_{w}^{\rB}$ are $O(\mathbf{q}^{w})$.
Using \eqref{equ:zeta_winding_coefficients},
we obtain $[\widehat{X}^{w}]\mathcal{U}_{\rB}^{f}=O(\mathbf{q}^{w})$ coefficientwise in $\fu$.
{By Propositions~\ref{prop:R_properties}(i) and \ref{prop:identification_R}, each coefficient of $\check{R}^{f}$, and hence of $(\check{R}^{f})^{-1}$, is $O(1)$,}
so \eqref{equ:B_model_open_leaf} gives
\[
[\widehat{X}^{w}]\mathcal{L}_{\rB}^{f}
=O(\mathbf{q}^{w})
\qquad\text{as }\mathbf{q}\to\infty,
\]
coefficientwise in $\fu$.
{The remaining vertex, edge, and translation factors in the B-model graph sum are coefficientwise $O(1)$ as $\mathbf{q}\to\infty$, by the same bound on $\check{R}^{f}$ and the formulas for the normalized canonical basis and pairing.}
{Thus each graph contributes $O(\mathbf{q}^{|\boldsymbol{\mu}|})$, and hence}
\[
B_{g,\boldsymbol{\mu}}(p,f,\mathbf{q})
=O(\mathbf{q}^{|\boldsymbol{\mu}|})
\qquad\text{as }\mathbf{q}\to\infty.
\]
\end{proof}

{We now turn to the A-model and also regard $p$ as a formal parameter in the topological vertex formula~\eqref{equ:open_vertex_gluing}. For $n>0$, $2g-2+n>0$, and $\boldsymbol{\mu}\in\bbZ_{>0}^{n}$, we set}
\[
\mathcal{P}_{g,\boldsymbol{\mu}}(p,f;\mathbf{q})
\coloneqq
(1-p^{2}\mathbf{q})^{5g-5+2n}
(1-\mathbf{q})^{(p+1)|\boldsymbol{\mu}|}
{\mathcal{F}^{f}_{g,\boldsymbol{\mu}}(\mathbf{q})}.
\]
{Each coefficient of $\mathcal{P}_{g,\boldsymbol{\mu}}$ is polynomial in $p,f$, as follows directly from the topological vertex formula.}
We write $Z^{f}=1+U$, so that $\log Z^{f}=\sum_{r\geq 1}(-1)^{r+1}U^{r}/r$.
Since $U$ has no constant term in $\mathbf{Q}$ and $\mathbf{p}$, only finitely many terms in the logarithmic expansion contribute to the coefficient of $\mathbf{Q}^{d}p_{\boldsymbol{\mu}}$.
For each product contributing to this coefficient, the dependence on $p,f$ is through the factor
\(
\exp(
\frac{\hbar}{2}
(f\sum_{a}\kappa_{\rho_{a}}+p\sum_{a}\kappa_{\nu_{a}})
).
\)
By \cite[Equation~(2-4)]{LLLZ09} and \eqref{equ:one_two_partition_W}, the remaining factor belongs to $\bbQ(\!(\hbar)\!)$, so only finitely many terms of this exponential contribute to the coefficient of $\hbar^{2g-2+n}$. The coefficient extraction in \eqref{equ:open_potential_recovery} therefore gives {$[\mathbf{Q}^{d}X_1^{\mu_1}\cdots X_n^{\mu_n}]\mathcal{F}^{f}_{g,n}(\mathbf{Q};X_1,\ldots,X_n)\in\bbQ[p,f]$}.
Under the closed mirror map, we have
\[
[\mathbf{q}^{m}]\mathbf{Q}^{d}
=(-1)^{m-d}\binom{d(p^{2}-1)}{m-d}\in\bbQ[p],
\qquad m\geq d\geq 0.
\]
Thus {$[\mathbf{q}^{m}]\mathcal{F}^{f}_{g,\boldsymbol{\mu}}(\mathbf{q})\in\bbQ[p,f]$}.
{The two factors in the definition of $\mathcal{P}_{g,\boldsymbol{\mu}}$ also have coefficients in $\bbQ[p]$, so
\begin{equation}\label{equ:open_coefficientwise_polynomiality}
\mathcal{P}_{g,\boldsymbol{\mu}}(p,f;\mathbf{q})\in\bbQ[p,f][\![\mathbf{q}]\!].
\end{equation}}

\begin{theorem}
    \label{thm:open_formal_p_polynomiality}
    Let $n>0$, $2g-2+n>0$, and $\boldsymbol{\mu}\in\bbZ_{>0}^{n}$.  The open potentials admit the expression
    \begin{equation}
    (1-\mathbf{q})^{(p+1)|\boldsymbol{\mu}|}
    \mathcal{F}^{f}_{g,\boldsymbol{\mu}}(\mathbf{q})
    =
    \frac{\mathcal{P}_{g,\boldsymbol{\mu}}(p,f;\mathbf{q})}
    {(1-p^{2}\mathbf{q})^{5g-5+2n}}
    \end{equation}
    for all integers $p\geq 2$ and formal $f$, where
    \[
    \mathcal{P}_{g,\boldsymbol{\mu}}(p,f;\mathbf{q})
    \in
    \bbQ[p,f,\mathbf{q}],
    \qquad
    \deg_{\mathbf{q}}\mathcal{P}_{g,\boldsymbol{\mu}}
    \leq 5g-5+2n+|\boldsymbol{\mu}|.
    \]
\end{theorem}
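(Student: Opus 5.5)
We combine the three preceding results: the Laurent structure in $\Delta$ (Proposition~\ref{prop:open_framing_filtration}), B-model regularity and growth (Proposition~\ref{prop:open_B_properties}), and the coefficientwise polynomiality \eqref{equ:open_coefficientwise_polynomiality}. Fix an integer $p\geq 2$. By Proposition~\ref{prop:open_framing_filtration}(ii), $\mathcal{F}^{f}_{g,\boldsymbol{\mu}}\in\Delta^{-(5g-5+2n)}\bbQ[f,\Delta]$. Since $\Delta=(1-p^{2}\mathbf{q})/(1-\mathbf{q})$, we may write
\[
\mathcal{F}^{f}_{g,\boldsymbol{\mu}}=\frac{G(f;\mathbf{q})}{(1-p^{2}\mathbf{q})^{5g-5+2n}(1-\mathbf{q})^{N}}
\]
for some $N\geq 0$ and some $G\in\bbQ[f,\mathbf{q}]$. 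Consequently $\mathcal{P}_{g,\boldsymbol{\mu}}=(1-\mathbf{q})^{(p+1)|\boldsymbol{\mu}|-N}G$ is a rational function of $\mathbf{q}$. Its only possible pole lies at $\mathbf{q}=1$, and its coefficients are polynomial in $f$. The remaining work is to remove this possible pole and to bound the degree.

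To handle $\mathbf{q}=1$, we use the B-model. For a nonexceptional integral framing $f\neq f_{\star}$, Theorem~\ref{thm:open_mirror_Xp} and the open mirror map $\widehat{X}_i=-X_i/(1-\mathbf{q})^{p+1}$ give
\[
(1-\mathbf{q})^{(p+1)|\boldsymbol{\mu}|}\mathcal{F}^{f}_{g,\boldsymbol{\mu}}=\pm B_{g,\boldsymbol{\mu}}(p,f,\mathbf{q}).
\]
By Proposition~\ref{prop:open_B_properties}(i), the right-hand side is regular at $\mathbf{q}=1$. Hence $\mathcal{P}_{g,\boldsymbol{\mu}}(p,f;\mathbf{q})$ is a polynomial in $\mathbf{q}$ for infinitely many integral $f$. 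For the degree, Proposition~\ref{prop:open_B_properties}(ii) gives $B_{g,\boldsymbol{\mu}}=O(\mathbf{q}^{|\boldsymbol{\mu}|})$ as $\mathbf{q}\to\infty$, so $\deg_{\mathbf{q}}\mathcal{P}_{g,\boldsymbol{\mu}}\leq 5g-5+2n+|\boldsymbol{\mu}|$ at these framings.

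To pass to formal $f$, we write $\mathcal{P}_{g,\boldsymbol{\mu}}=(1-\mathbf{q})^{-M}H(f;\mathbf{q})$ with $H\in\bbQ[f,\mathbf{q}]$ and $M\geq 0$. The argument above shows that, for infinitely many integers $f$, the polynomial $H(f;\mathbf{q})$ is divisible by $(1-\mathbf{q})^{M}$, and the quotient has bounded degree. Divisibility is equivalent to the vanishing of $\partial_{\mathbf{q}}^{j}H(f;1)$ for $j<M$. Each of these quantities is a polynomial in $f$ that vanishes at infinitely many points, so it vanishes identically. The same reasoning applies to the coefficients of $\mathbf{q}^{m}$ in the quotient for $m>5g-5+2n+|\boldsymbol{\mu}|$: they are polynomials in $f$ vanishing at infinitely many integers, hence zero. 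Together with \eqref{equ:open_coefficientwise_polynomiality}, which shows the coefficients lie in $\bbQ[p,f]$, this gives $\mathcal{P}_{g,\boldsymbol{\mu}}\in\bbQ[p,f,\mathbf{q}]$ with the stated degree bound, for every integer $p\geq2$.

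The main obstacle is this interpolation step. One must check that the coefficients appearing in the rational form are genuinely polynomial in $f$, rather than merely formal power series in $\mathbf{q}$ whose coefficients are polynomial in $f$. This follows because $G$ was produced by Proposition~\ref{prop:open_framing_filtration}, whose output lies in $\bbQ[f,\Delta]$, so all the quantities being interpolated are polynomial in $f$. A second point requires care: Proposition~\ref{prop:open_B_properties} applies only at nonexceptional integral $f\neq f_{\star}$. Only finitely many integers are excluded, so infinitely many framings remain available for the interpolation.
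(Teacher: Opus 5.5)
Your proposal is correct and follows essentially the same route as the paper. In both, the Laurent structure in $\Delta$ confines the possible poles to $\mathbf{q}=1$, B-model regularity and the growth estimate at nonexceptional integral framings remove that pole and give the degree bound, and polynomial dependence on $f$ and $p$ finishes by interpolation. The differences are only bookkeeping. You make the $f$-interpolation explicit via divisibility by $(1-\mathbf{q})^{M}$, whereas the paper extends the mirror identity to formal $f$. You also leave implicit the final step, which the paper states: each coefficient $[\mathbf{q}^{m}]\mathcal{P}_{g,\boldsymbol{\mu}}\in\bbQ[p,f]$ with $m>5g-5+2n+|\boldsymbol{\mu}|$ vanishes at every integer $p_0\geq 2$, hence vanishes identically.
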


\begin{proof}
{For nonexceptional integral $f$, Theorem~\ref{thm:open_mirror_Xp}, the open mirror map, and the definition of $\mathcal{P}_{g,\boldsymbol{\mu}}$ give
\begin{equation}\label{equ:N_B_relation}
\mathcal{P}_{g,\boldsymbol{\mu}}(p,f;\mathbf{q})
=(-1)^{g-1+n+|\boldsymbol{\mu}|}
(1-p^{2}\mathbf{q})^{5g-5+2n}
B_{g,\boldsymbol{\mu}}(p,f,\mathbf{q}).
\end{equation}
Both sides are rational in $f$ and $\mathbf{q}$ by topological recursion and Proposition~\ref{prop:open_framing_filtration}(ii). Since the identity holds for infinitely many integral framings, it extends to formal $f$.}

{Since $\Delta=(1-p^{2}\mathbf{q})/(1-\mathbf{q})$, Proposition~\ref{prop:open_framing_filtration}(ii) shows that $\mathcal{P}_{g,\boldsymbol{\mu}}$ has no finite poles except possibly at $\mathbf{q}=1$. By Proposition~\ref{prop:open_B_properties}(i) and~\eqref{equ:N_B_relation}, it is also regular there for nonexceptional $f\neq f_{\star}$. Polynomial dependence on $f$ then gives
\begin{equation}\label{equ:open_fixed_p_polynomiality}
\mathcal{P}_{g,\boldsymbol{\mu}}(p,f;\mathbf{q})
\in
\bbQ[f,\mathbf{q}].
\end{equation}

Finally, for nonexceptional integral $f$, Proposition~\ref{prop:open_B_properties}(ii) and~\eqref{equ:N_B_relation} give}
{\[
    \deg_{\mathbf{q}}\mathcal{P}_{g,\boldsymbol{\mu}}(p,f;\mathbf{q})
    \leq 5g-5+2n+|\boldsymbol{\mu}|.
    \]
For $m>5g-5+2n+|\boldsymbol{\mu}|$, set $a_m(p,f)\coloneqq[\mathbf{q}^{m}]\mathcal{P}_{g,\boldsymbol{\mu}}(p,f;\mathbf{q})$. By~\eqref{equ:open_coefficientwise_polynomiality}, we have $a_m(p,f)\in\bbQ[p,f]$.}
For each integer $p_0\geq 2$, the degree estimate implies $a_m(p_0,f_0)=0$ for infinitely many integral $f_0$.
Hence $a_m(p_0,f)=0$.
Since this holds for infinitely many $p_0$, we obtain $a_m(p,f)=0$, completing the proof.
\end{proof}

\section{Double-scaling limits and pure gravity}
\label{sec:double_scaling}

In this section, we determine the double-scaling limits of the spectral curves of $X_p$, their stable topological recursion differentials, and the closed free energies for all fixed framings. 
At $f=0$, the disk and annulus limits agree with those of~\cite{Mar08},
and the closed limits yield another proof of the pure-gravity limit conjectured in~\cite{CGMPS07} and subsequently proved by Mari\~no~\cite{Mar09}.

\subsection{Double-scaling limits of the spectral curve}

We consider the spectral curve~\eqref{equ:SpecCurv} with $\mathbf{q}$ replaced by
\begin{equation}\label{equ:double_scaling_parameters}
\mathbf{q}_{\varepsilon}
\coloneqq\frac{1-\varepsilon^{2}\tau}{p^{2}-\varepsilon^{2}\tau},
\end{equation}
where $\tau\in\bbC^{*}$ is fixed.
Then $\mathbf{q}_{\varepsilon}\to p^{-2}$ as $\varepsilon\to 0$, and
\[
\Delta(\mathbf{q}_{\varepsilon})
=\frac{1-p^{2}\mathbf{q}_{\varepsilon}}{1-\mathbf{q}_{\varepsilon}}
=\varepsilon^{2}\tau.
\]

At $\mathbf{q}=p^{-2}$, both $\rd x_{f}$ and $\rd y$ vanish at $z=-p$.  We therefore introduce the rescaled coordinate $u=(z+p)/\varepsilon$, with
\[
\iota_{\varepsilon}(u)\coloneqq -p+\varepsilon u.
\]
For $f\neq f_{\star}$, we set
\begin{equation}\label{equ:critical_y_shear}
\widetilde{y}_{f}
\coloneqq y-\frac{x_{f}}{f-f_{\star}},
\end{equation}
which removes the quadratic term in the expansion at $z=-p$ when $\mathbf{q}=p^{-2}$.
The Eynard--Orantin differentials for $2g-2+n>0$ and the free energies are unchanged by this replacement.

\begin{proposition}\label{prop:critical_spectral_curve}
For $f\neq f_{\star}$, the functions
\[
\begin{aligned}
x_{f,\varepsilon}(u)
&\coloneqq\varepsilon^{-2}
\left(
x_{f}(\iota_{\varepsilon}(u);\mathbf{q}_{\varepsilon})
-x_{f}(-p;\mathbf{q}_{\varepsilon})
\right),\\
y_{f,\varepsilon}(u)
&\coloneqq\varepsilon^{-3}
\left(
\widetilde{y}_{f}(\iota_{\varepsilon}(u);\mathbf{q}_{\varepsilon})
-\widetilde{y}_{f}(-p;\mathbf{q}_{\varepsilon})
\right)
\end{aligned}
\]
converge locally uniformly on $\bbC$, as $\varepsilon\to 0$, to
\begin{equation}\label{equ:generic_double_scaled_curve}
x_{f}^{\mathrm{ds}}(u)
=\frac{(p-1)(f-f_{\star})}{2p(p+1)^{2}}u^{2},
\qquad
y_{f}^{\mathrm{ds}}(u)
=\frac{1}{p^{2}(p+1)^{2}(f-f_{\star})}
\left(
\frac{u^{3}}{3}-(p+1)^{2}\tau u
\right).
\end{equation}
At $f=f_{\star}$, the functions
\[
\begin{aligned}
x_{f_{\star},\varepsilon}(u)
&\coloneqq\varepsilon^{-3}
\left(
x_{f_{\star}}(\iota_{\varepsilon}(u);\mathbf{q}_{\varepsilon})
-x_{f_{\star}}(-p;\mathbf{q}_{\varepsilon})
\right),\\
y_{f_{\star},\varepsilon}(u)
&\coloneqq\varepsilon^{-2}
\left(
y(\iota_{\varepsilon}(u);\mathbf{q}_{\varepsilon})
-y(-p;\mathbf{q}_{\varepsilon})
\right)
\end{aligned}
\]
converge in the same sense to
\begin{equation}\label{equ:special_double_scaled_curve}
x_{f_{\star}}^{\mathrm{ds}}(u)
=-\frac{1}{p^{2}(p+1)^{2}}
\left(
\frac{u^{3}}{3}-(p+1)^{2}\tau u
\right),
\qquad
y_{f_{\star}}^{\mathrm{ds}}(u)
=\frac{p-1}{2p(p+1)^{2}}u^{2}.
\end{equation}
In both cases, the Bergman kernel satisfies
\[
(\iota_{\varepsilon}\times\iota_{\varepsilon})^{*}B=\frac{\rd u_{1}\rd u_{2}}{(u_{1}-u_{2})^{2}}.
\]
\end{proposition}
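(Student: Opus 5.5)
The plan is to Taylor-expand everything at the point $z=-p$ with $\mathbf{q}=\mathbf{q}_{\varepsilon}$, working with derivatives rather than the functions themselves. Since $\rd u=\varepsilon^{-1}\rd z$, it suffices to show that
\[
\varepsilon^{-1}x_f'(\iota_\varepsilon(u);\mathbf{q}_\varepsilon)\quad(\text{resp. }\varepsilon^{-2}\text{ at }f_\star),\qquad \varepsilon^{-2}\widetilde{y}_f'(\iota_\varepsilon(u);\mathbf{q}_\varepsilon)\quad(\text{resp. }\varepsilon^{-1}y')
\]
converge locally uniformly to the $u$-derivatives of the claimed limits. The functions themselves then follow by integrating from $u=0$, since both sides vanish there.

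By \eqref{equ:xf_prime_Nf}, $x_f'=N_f/(z(1-z)(1-\mathbf{q}z))$, and
\[
y'=\frac{1}{1-z}-\frac{p\mathbf{q}}{1-\mathbf{q}z}=\frac{1-p\mathbf{q}+(p-1)\mathbf{q}z}{(1-z)(1-\mathbf{q}z)}.
\]
The denominators are analytic and nonvanishing near $z=-p$, $\mathbf{q}=p^{-2}$. At that point they equal $-p(1+p)(1+1/p)=-(p+1)^2$ and $(1+p)^2/p$, respectively. So the whole problem reduces to expanding two polynomial numerators, $N_f(z)$ and $M(z):=1-p\mathbf{q}+(p-1)\mathbf{q}z$.

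Write $\delta:=\mathbf{q}_\varepsilon-p^{-2}$. This equals $-\varepsilon^2\tau(p^2-1)/p^2(p^2-\varepsilon^2\tau)=-\varepsilon^2\tau(p^2-1)/p^4+O(\varepsilon^4)$, so it is of order $\varepsilon^2$, and I substitute $z=-p+\varepsilon u$.

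For $M$, we have $M(-p)|_{\mathbf{q}=p^{-2}}=0$. The linear coefficient is $(p-1)/p^2$, and $\partial_{\mathbf{q}}M(-p)=-p-(p-1)p=-p^2$. Hence
\[
M=\varepsilon u(p-1)/p^2+\varepsilon^2\tau(p^2-1)/p^2+O(\varepsilon^3).
\]

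For $N_f$, at $\mathbf{q}=p^{-2}$ I expect a double root at $-p$ when $f\neq f_\star$ and a triple-root degeneration after combination. I will compute $N_f(-p)$, $N_f'(-p)$ and $N_f''/2$ at $\mathbf{q}=p^{-2}$, together with the $\delta$-derivatives of $N_f(-p)$ and $N_f'(-p)$. By the theorem's formulas, the expected outcome is that $N_f$ has a double zero, so $N_f=c\,\varepsilon^2(u^2+\kappa\tau)+O(\varepsilon^3)$. Hence $\varepsilon^{-1}x_f'$ is not quite right, and the shear enters. Concretely, $\widetilde y_f'=y'-x_f'/(f-f_\star)$ has numerator $(1-z)(1-\mathbf{q}z)\widetilde y_f'\,z$ equal to $zM-N_f/(f-f_\star)$, which is a polynomial whose constant and linear terms in $\varepsilon$ cancel by the choice of shear. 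I will verify that it has the shape $\varepsilon^2(\alpha u^2-\beta\tau)$, with the $\tau$-term coming from the $\delta$-shifts. Similarly $x_f'$ should be $O(\varepsilon)$ with leading term linear in $u$, matching $x^{\mathrm{ds}}_f{}'$.

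At $f=f_\star$ we have $\gamma_{f_\star}=1$, and I expect $N_{f_\star}$ to acquire the extra vanishing, giving the cubic, while $y'\sim\varepsilon u$ gives the quadratic. Convergence is locally uniform because all the expansions are polynomial in $u$ with analytic remainders uniform on compacts.

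The statement about the Bergman kernel is immediate: the cross-ratio form is invariant under affine maps, so $\rd z_i=\varepsilon\,\rd u_i$ and $z_1-z_2=\varepsilon(u_1-u_2)$.

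The main obstacle is bookkeeping the constant normalizations, in particular matching the exact coefficients $\frac{(p-1)(f-f_\star)}{2p(p+1)^2}$ and $\frac{1}{p^2(p+1)^2(f-f_\star)}$ together with the $(p+1)^2\tau$ shift. This requires carefully including the $O(\delta)$ corrections to both the numerators and the denominator values. I would first do the $f_\star$ case, where the algebra is simplest, and then use $x_f=x_{f_\star}+(f-f_\star)y$ together with $\widetilde y_f=-x_{f_\star}/(f-f_\star)$ to deduce the generic case directly. This makes the identity $x_f=x_{f_\star}+(f-f_\star)y$ the key simplification: the generic limits become $(f-f_\star)y^{\mathrm{ds}}_{f_\star}$ and $-x^{\mathrm{ds}}_{f_\star}/(f-f_\star)$ at the appropriate scales, which matches \eqref{equ:generic_double_scaled_curve} exactly.
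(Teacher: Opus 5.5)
Your final plan is the paper's proof. You reduce $f\neq f_{\star}$ to $f_{\star}$ via $x_f=x_{f_\star}+(f-f_\star)y$ and $\widetilde y_f=-x_{f_\star}/(f-f_\star)$, take the limits of $\varepsilon^{-2}x_{f_\star}'$ and $\varepsilon^{-1}y'$ at $z=-p+\varepsilon u$, $\mathbf{q}=\mathbf{q}_\varepsilon$, integrate from $u=0$, and use affine invariance for $B$. One slip in your exploratory middle paragraph is harmless but should be corrected: for $f\neq f_\star$, $N_f=(z+p)(\gamma_f z+p)/p^2$ at $\mathbf{q}=p^{-2}$, so $-p$ is only a simple zero, consistent with your later (correct) remark that $x_f'=O(\varepsilon)$ with linear leading term; the double zero occurs only at $f_\star$, where $\partial_{\mathbf{q}}N_{f_\star}(-p)=p^2(f_\star+1)$ gives $N_{f_\star}=\varepsilon^2\bigl(u^2-(p+1)^2\tau\bigr)/p^2+O(\varepsilon^3)$, as needed.
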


\begin{proof}
We first reduce the case $f\neq f_{\star}$ to $f=f_{\star}$.
By \eqref{equ:SpecCurv}, we have
\[ x_f=x_{f_{\star}}+(f-f_{\star})y. \]
For $f\neq f_{\star}$, it follows that
\[
x_{f,\varepsilon}
=\varepsilon x_{f_{\star},\varepsilon}
+(f-f_{\star})y_{f_{\star},\varepsilon},
\qquad
y_{f,\varepsilon}
=-\frac{x_{f_{\star},\varepsilon}}{f-f_{\star}}.
\]
Since $x_{f_{\star},\varepsilon}(0)=y_{f_{\star},\varepsilon}(0)=0$,
it suffices to prove local uniform convergence of their derivatives.
Substituting \eqref{equ:double_scaling_parameters} and $z=-p+\varepsilon u$ into the expressions for $x_{f_{\star}}'$ and $y'$ from \eqref{equ:xf_prime_Nf} and \eqref{equ:SpecCurv}, we obtain
\[
\begin{aligned}
\lim_{\varepsilon\to 0}
\varepsilon^{-2}x_{f_{\star}}'
(-p+\varepsilon u;\mathbf{q}_{\varepsilon})
&=-\frac{u^{2}-(p+1)^{2}\tau}{p^{2}(p+1)^{2}},\\
\lim_{\varepsilon\to 0}
\varepsilon^{-1}y'
(-p+\varepsilon u;\mathbf{q}_{\varepsilon})
&=\frac{p-1}{p(p+1)^{2}}u.
\end{aligned}
\]
After cancellation of the powers of $\varepsilon$, these derivatives extend holomorphically across $\varepsilon=0$, locally in $u$.
Integration from $u=0$ then gives the claimed convergence.
The formulas for general $f$ follow from the identities above, and the formula for $B$ follows by direct substitution.
\end{proof}

To analyze the residues in topological recursion, we next determine the critical points of $x_{f}$ and their behavior as $\varepsilon \to 0$.

\begin{lemma}
\label{lem:critical_branch_structure}
For all sufficiently small $\varepsilon\neq 0$, the critical points of $x_f$ at $\mathbf{q}=\mathbf{q}_{\varepsilon}$ are simple, and $\rd y$ is nonzero at each of them. Their behavior as $\varepsilon\to 0$ is as follows.
\begin{enumerate}
\item[(i)] If $f\neq f_{\star}$, exactly one critical point tends to $-p$, and its rescaled coordinate $u=(z+p)/\varepsilon$ tends to $0$. If a second critical point exists, it converges to a simple critical point of $x_f$ at $\mathbf{q}=p^{-2}$, distinct from $-p$.
\item[(ii)] If $f=f_{\star}$, there are exactly two critical points, whose $u$-coordinates tend to ${\pm(p+1)\sqrt{\tau}}$.
\end{enumerate}
\end{lemma}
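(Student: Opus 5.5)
The plan is to work directly with the quadratic numerator $N_f(z)=1+f(1-p\mathbf{q})z+\mathbf{q}\gamma_f z^2$ from \eqref{equ:xf_prime_Nf}. The critical points of $x_f$ on $\bbP^1$ are the zeros of $\rd x_f$. Since $\rd x_f$ has simple poles at $0,1,\mathbf{q}^{-1},\infty$, these critical points are exactly the roots of $N_f$ that are not among $\{0,1,\mathbf{q}^{-1}\}$. The only exception is when $\gamma_f=0$: then $N_f$ has degree one, and the point at infinity must be checked separately through the residue of $\rd x_f$ there.

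First, evaluate at $\mathbf{q}=p^{-2}$. One checks directly that $N_f(-p)=1-f(p-1)/p+(f(p-1)-1)/p^2\cdot p^2\cdot p^{-2}\cdot\ldots$; more cleanly, the leading computation in the proof of Proposition~\ref{prop:critical_spectral_curve} shows that $\varepsilon^{-2}x_{f_\star}'(-p+\varepsilon u)$ and $\varepsilon^{-1}y'(-p+\varepsilon u)$ have finite nonzero limits. Using $x_f'=x_{f_\star}'+(f-f_\star)y'$, we get
\[
\varepsilon^{-1}x_f'(-p+\varepsilon u;\mathbf{q}_\varepsilon)\to (f-f_\star)\frac{p-1}{p(p+1)^2}u,
\]
uniformly for $u$ in compact sets.

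For (i), where $f\neq f_\star$, the limit above is a linear function of $u$ with a simple zero at $u=0$. By Hurwitz's theorem, exactly one zero of $x_f'$ lies in the disc $|z+p|<C\varepsilon$, and its $u$-coordinate tends to $0$. To see that no other zero approaches $-p$ at a slower rate, note that $-p$ is a simple root of $N_f$ at $\mathbf{q}=p^{-2}$: the linear limit above, with nonzero slope, is equivalent to $N_f'(-p)\neq0$. Hence, by continuity of the roots of $N_f$ in $\mathbf{q}$, the second root, if it exists (i.e.\ if $\gamma_f\neq0$), converges to the other root of $N_f|_{\mathbf{q}=p^{-2}}$. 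This other root is distinct from $-p$, and a check shows it is not among $0$, $1$, or $p^2$. At the critical point near $-p$, simplicity and the nonvanishing of $\rd y$ follow from the limiting local model. Indeed, $x_f''$ rescaled by $\varepsilon^{0}$ is asymptotic to the nonzero slope, while $y'\sim\varepsilon\frac{p-1}{p(p+1)^2}u$ might seem to vanish at $u=0$. This is the delicate point. Here I would use the refinement $u_c=O(\varepsilon)$ and compute $y'$ at the critical point exactly. From $x_f'=0$ we get $y'=-x_{f_\star}'/(f-f_\star)$, and $x_{f_\star}'(-p+\varepsilon u)\sim-\varepsilon^2(u^2-(p+1)^2\tau)/(p^2(p+1)^2)$ evaluated at $u\approx0$ gives $\varepsilon^2\tau/p^2\neq0$ since $\tau\neq0$. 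At the far critical point, nondegeneracy and $\rd y\neq0$ hold by continuity from $\mathbf{q}=p^{-2}$. These conditions should be checked using $y'=-x_{f_\star}'/(f-f_\star)$ together with the fact that $N_{f_\star}$ and $N_f$ share no other root.

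For (ii), at $f=f_\star$ we have $\gamma_{f_\star}=1$, so $N_{f_\star}$ is a genuine quadratic and has two roots. The limit $\varepsilon^{-2}x_{f_\star}'\to -(u^2-(p+1)^2\tau)/(p^2(p+1)^2)$ has simple zeros at $u=\pm(p+1)\sqrt\tau$, which are distinct because $\tau\neq0$. Hurwitz's theorem therefore produces two zeros with these limits, and they exhaust the roots of the quadratic. Simplicity follows from the nonzero derivative of the limit. Moreover, $\varepsilon^{-1}y'\to\frac{p-1}{p(p+1)^2}u\neq0$ at these limits, so $\rd y\neq0$.

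The main obstacle I expect is the bookkeeping in (i): verifying that the second root at $\mathbf{q}=p^{-2}$ avoids the poles $0$, $1$, and $p^2$, and handling the degenerate case $\gamma_f=0$, i.e.\ $f=1/(p-1)$, where the point at infinity must be examined.
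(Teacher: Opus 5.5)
Your proof is correct in substance. For the location and simplicity of the critical points it is the paper's argument. Part (i) rests on $-p$ being a simple root of $N_f$ at $\mathbf{q}=p^{-2}$, together with continuity of the roots. The paper reads this off from $N_f=(z+p)(\gamma_f z+p)/p^2$; you extract $N_f'(-p)\neq0$ from the rescaled limit $(f-f_\star)\frac{p-1}{p(p+1)^2}u$. Part (ii) is the same perturbation of the simple zeros $u=\pm(p+1)\sqrt{\tau}$ of $\lim\varepsilon^{-2}x_{f_\star}'$.

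The genuine difference is in showing $\rd y\neq0$. The paper makes one global check. The only zero of $\rd y$ is $z_y=-(1-p\mathbf{q})/(\mathbf{q}(p-1))$, and $N_f(z_y)=-(\mathbf{q}-1)(p^2\mathbf{q}-1)/(\mathbf{q}(p-1)^2)$. This is independent of $f$ (because $y'(z_y)=0$) and nonzero for $\mathbf{q}\neq1,p^{-2}$, so it covers every framing and every critical point at once. You instead argue critical point by critical point. Near $-p$ you use $y'(z_c)=-x_{f_\star}'(z_c)/(f-f_\star)\sim-\varepsilon^{2}\tau/(p^{2}(f-f_\star))$. At the far point you use continuity. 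For $f=f_\star$ you use $\varepsilon^{-1}y'\to\frac{p-1}{p(p+1)^{2}}u$. This is valid: it needs only $u_c\to0$ and locally uniform convergence, so your unproved refinement $u_c=O(\varepsilon)$ is unnecessary. It also exhibits the rate $\varepsilon^2$ at which $\rd y$ degenerates, but it is longer and needs case distinctions that the paper's identity avoids.

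One intermediate claim is false and should be corrected. For $f=-1$ and $f=1/p$ one has $\gamma_f\neq0$, but the other root $-p/\gamma_f$ of $N_f|_{\mathbf{q}=p^{-2}}$ equals $1$, resp.\ $p^{2}$. So ``a check shows it is not among $0$, $1$, or $p^2$'' fails there. In fact $N_{-1}=(1-z)(1+p\mathbf{q}z)$ and $N_{1/p}=(1-\mathbf{q}z)(1+z/p)$ identically in $\mathbf{q}$. That root therefore cancels against the denominator of $x_f'$, and no second critical point exists. A second critical point exists exactly when $\gamma_f\neq0$ and $f\notin\{-1,1/p\}$, not whenever $\gamma_f\neq0$.

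Your opening criterion (discard roots of $N_f$ lying in $\{0,1,\mathbf{q}^{-1}\}$) already handles this correctly. The lemma also only asserts convergence of a second critical point if one exists, so the conclusion is unaffected; the paper treats these two framings explicitly. The $\gamma_f=0$ check at infinity that you left open is immediate: then $x_f'\sim f(1-p\mathbf{q})/(\mathbf{q}z^{2})$ as $z\to\infty$, so $\rd x_f$ is holomorphic and nonzero at $\infty$.
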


\begin{proof}
Suppose first that $f\neq f_{\star}$. At $\mathbf{q}=p^{-2}$, we have
\[
N_{f}(z)=\frac{(z+p)(\gamma_{f}z+p)}{p^{2}}.
\]
For $f=-1$ and $f=p^{-1}$, the factors $1-z$ and $1-\mathbf{q}z$, respectively, cancel with the denominator in \eqref{equ:xf_prime_Nf}. When $\gamma_f=0$, the polynomial $N_f$ is linear.
Thus all zeros of $\rd x_f$ at $\mathbf{q}=p^{-2}$ are simple and extend holomorphically in $\mathbf{q}$.
Since $\mathbf{q}_{\varepsilon}-p^{-2}=O(\varepsilon^2)$, the zero tending to $-p$ satisfies $z=-p+O(\varepsilon^2)$ and hence $u=O(\varepsilon)$.
Any second critical point tends to $-p/\gamma_f=-p/(1+(p-1)(f-f_{\star}))\neq -p$, proving (i).

For $f=f_{\star}$, the proof of Proposition~\ref{prop:critical_spectral_curve} shows that $\partial_{u}x_{f_{\star},\varepsilon}$ extends holomorphically to $\varepsilon=0$, with simple zeros $u=\pm(p+1)\sqrt{\tau}$.
These simple zeros extend to $u=\pm(p+1)\sqrt{\tau}+O(\varepsilon)$, proving (ii).

Finally, the unique finite zero of $y'$ is $z_{y}=-(1-p\mathbf{q})/(\mathbf{q}(p-1))$, and
\[
N_{f}(z_{y})
=-\frac{(\mathbf{q}-1)(p^{2}\mathbf{q}-1)}
{\mathbf{q}(p-1)^{2}}
\neq 0
\]
at $\mathbf{q}=\mathbf{q}_{\varepsilon}$ for sufficiently small $\varepsilon\neq 0$. Thus $\rd y$ is nonzero at every zero of $\rd x_{f}$.
\end{proof}

To compare these limits with minimal models, we consider the spectral curves
\begin{equation}\label{equ:unit_minimal_model_curves}
\begin{alignedat}{3}
\mathsf{M}_{3,2}:\quad&&
x(v)&=\frac{1}{2}(v^{2}-2),
\qquad&
y(v)&= -\frac{1}{3}(v^{3}-3v),\\
\mathsf{M}_{2,3}:\quad&&
x(v)&=\frac{1}{3}(v^{3}-3v),
\qquad&
y(v)&=\frac{1}{2}(v^{2}-2).
\end{alignedat}
\end{equation}
Both curves are equipped with the Bergman kernel $\frac{\rd v_{1}\rd v_{2}}{(v_{1}-v_{2})^{2}}$.

Under the change of coordinates $u=(p+1)\sqrt{\tau}\,v$,
the limiting curves \eqref{equ:generic_double_scaled_curve} and \eqref{equ:special_double_scaled_curve} are obtained from $\mathsf{M}_{3,2}$ and $\mathsf{M}_{2,3}$, respectively, by rescaling $x,y$ and adding constants.

\subsection{Double-scaling limits in topological recursion}
\label{subsec:double_scaling_TR}

Let $\omega_{g,n}^{f}(\mathbf{q})$ denote the Eynard--Orantin differentials of the spectral curve \eqref{equ:SpecCurv}.
We write $\omega_{g,n}^{f,\tau,\mathrm{ds}}$ and $F_{g}^{f,\tau,\mathrm{ds}}$ for the differential and free energy of \eqref{equ:generic_double_scaled_curve} when $f\neq f_{\star}$, and for those of \eqref{equ:special_double_scaled_curve} when $f=f_{\star}$.

Note that under $x\mapsto ax$ and $y\mapsto by$, with $a,b\in\bbC^{*}$ and $B$ unchanged, topological recursion and the residue formula for the free energy give
\[
\omega_{g,n}\longmapsto(ab)^{2-2g-n}\omega_{g,n},
\qquad
F_{g}\longmapsto(ab)^{2-2g}F_{g},
\]
for $2g-2+n>0$ and $g\geq 2$, respectively.
Moreover, adding to $y$ a function of $x$ that is holomorphic near the critical values of $x$ leaves $\omega_{g,n}$ for $2g-2+n>0$ and $F_g$ for $g\geq 2$ unchanged.

\begin{lemma}\label{lem:common_ds_free_energy}
For $g\geq 2$, the free energy $F_{g}^{f,\tau,\mathrm{ds}}$ is independent of $f$ and satisfies
\[
F_{g}^{f,\tau,\mathrm{ds}}
=\frac{p^{6g-6}}{(p^{2}-1)^{2g-2}}
\tau^{5-5g}F_{g}(\mathsf{M}_{3,2}),
\]
where $F_{g}(\mathsf{M}_{3,2})$ denotes the genus-$g$ free energy of the
spectral curve $\mathsf{M}_{3,2}$ in \eqref{equ:unit_minimal_model_curves}.
\end{lemma}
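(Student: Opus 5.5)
The plan is to reduce both limiting curves to the unit minimal models $\mathsf{M}_{3,2}$ and $\mathsf{M}_{2,3}$ of \eqref{equ:unit_minimal_model_curves}. I would use the reparametrization $u=(p+1)\sqrt{\tau}\,v$ together with the scaling rule $F_{g}\mapsto(ab)^{2-2g}F_{g}$ recorded just before the lemma. I would also use that adding constants to $x$ or $y$ does not change $F_{g}$ for $g\geq 2$. The Bergman kernel is invariant under the affine change $u\mapsto v$, since $\rd u_{1}\rd u_{2}/(u_{1}-u_{2})^{2}=\rd v_{1}\rd v_{2}/(v_{1}-v_{2})^{2}$. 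Residues at the critical points are also unaffected by reparametrization. So the free energy of the limiting curve equals that of the curve written in $v$.

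For $f\neq f_{\star}$, I would substitute into \eqref{equ:generic_double_scaled_curve}. This gives
\[
x^{\mathrm{ds}}_{f}=\tfrac{(p-1)(f-f_{\star})\tau}{p}\cdot\tfrac{1}{2}v^{2},
\qquad
y^{\mathrm{ds}}_{f}=\tfrac{(p+1)\tau^{3/2}}{p^{2}(f-f_{\star})}\bigl(\tfrac{v^{3}}{3}-v\bigr).
\]
Hence, up to additive constants, the curve is $\mathsf{M}_{3,2}$ with
\[
a=\tfrac{(p-1)(f-f_{\star})\tau}{p},
\qquad
b=-\tfrac{(p+1)\tau^{3/2}}{p^{2}(f-f_{\star})},
\qquad
ab=-\tfrac{(p^{2}-1)\tau^{5/2}}{p^{3}}.
\]
The product $ab$ is independent of $f$. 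Raising it to the even power $2-2g$ gives $(ab)^{2-2g}=p^{6g-6}(p^{2}-1)^{2-2g}\tau^{5-5g}$, independently of the sign and of the branch chosen for $\sqrt{\tau}$. This proves the formula for $f\neq f_{\star}$.

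For $f=f_{\star}$, the same substitution in \eqref{equ:special_double_scaled_curve} gives
\[
x^{\mathrm{ds}}_{f_{\star}}=-\tfrac{(p+1)\tau^{3/2}}{p^{2}}\cdot\tfrac{1}{3}(v^{3}-3v),
\qquad
y^{\mathrm{ds}}_{f_{\star}}=\tfrac{(p-1)\tau}{p}\cdot\tfrac{1}{2}v^{2}
\]
up to constants. This is $\mathsf{M}_{2,3}$ rescaled with the same product $ab=-(p^{2}-1)\tau^{5/2}/p^{3}$. So $F_{g}^{f_{\star},\tau,\mathrm{ds}}=p^{6g-6}(p^{2}-1)^{2-2g}\tau^{5-5g}F_{g}(\mathsf{M}_{2,3})$.

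It remains to show $F_{g}(\mathsf{M}_{2,3})=F_{g}(\mathsf{M}_{3,2})$, and this is the main obstacle. The two curves differ exactly by the exchange $x\leftrightarrow y$. I would invoke the $x$--$y$ symplectic invariance of the free energies $F_{g}$, $g\geq 2$, for genus-zero curves with polynomial $x,y$. I would cite its proof for such curves by Alexandrov--Bychkov--Dunin-Barkowski--Kazarian--Shadrin (and earlier Eynard--Orantin for this setting). I would first check that the hypotheses are met: both $x$ and $y$ have simple critical points $v=0$, respectively $v=\pm1$, at which the other differential is nonzero. If a self-contained route is preferred, a fallback is to identify both sides with the known pure-gravity/KdV free energies, namely the Painlevé~I expansion, via their CohFT descriptions \eqref{equ:TR_CohFT_closed}, though this is more laborious.
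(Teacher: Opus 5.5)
Your proposal is correct and matches the paper's proof: both rescale via $u=(p+1)\sqrt{\tau}\,v$ to the unit curves, obtain the same product $ab=-(p^{2}-1)\tau^{5/2}/p^{3}$ for $f\neq f_{\star}$ and for $f=f_{\star}$, and then identify $\mathsf{M}_{2,3}$ with $\mathsf{M}_{3,2}$ by exchanging $x$ and $y$ (together with $v\mapsto -v$) and invoking the $x$--$y$ symmetry of $F_g$, which the paper takes from Eynard--Orantin's 2013 work. Your values of $a$ and $b$, and your observation that $(ab)^{2-2g}$ does not depend on the branch of $\sqrt{\tau}$, are correct.
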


\begin{proof}
For the rescalings described above, the product of the factors multiplying $x$ and $y$ is $-(p^{2}-1)\tau^{5/2}/p^{3}$ in both cases.
Exchanging $x$ and $y$ and replacing $v$ by $-v$ identifies $\mathsf{M}_{3,2}$ with $\mathsf{M}_{2,3}$.
By \cite[Equation (4-1)]{EO13}, we have $F_g(\mathsf{M}_{2,3})=F_g(\mathsf{M}_{3,2})$.
The stated formula follows from the rescaling formula above.
\end{proof}

\begin{lemma}\label{lem:critical_regular_recursion_weights}
For $n>0$ and $2g-2+n>0$, using \eqref{equ:EO_recursion}, we obtain an iterated residue expansion of
\[ \varepsilon^{5(2g-2+n)}(\iota_{\varepsilon}^{\times n})^{*}\omega_{g,n}^{f}(\mathbf{q}_{\varepsilon}). \]
As $\varepsilon\to 0$, the terms with all residue points tending to $-p$
converge to the corresponding terms for the limiting curve,
while all other terms tend to zero.
Both limits are locally uniform away from critical points,
poles, and diagonals of the limiting curve.
\end{lemma}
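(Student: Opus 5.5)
Unfolding the recursion \eqref{equ:EO_recursion} for $\omega_{g,n}^{f}(\mathbf{q}_\varepsilon)$ recursively in $2g-2+n$ writes it as a finite sum of iterated residues. Each term is indexed by a tree-like sequence of recursion steps, with every residue taken at one of the critical points $a_\alpha(\mathbf{q}_\varepsilon)$ given by Lemma~\ref{lem:critical_branch_structure}. The integrand is a product of recursion kernels $K_\beta$ and Bergman kernels evaluated at the residue variables, their local involutions, and the external points. I will pull back by $\iota_\varepsilon$ and, for residue points near $-p$, also substitute $z=\iota_\varepsilon(u)$ in the residue variables. The proof then becomes a power counting in $\varepsilon$, checked term by term and done by induction on $2g-2+n$. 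The inductive hypothesis, applied to the factors $\omega_{g',n'}$ appearing inside a single recursion step, is that $\varepsilon^{5(2g'-2+n')}(\iota_\varepsilon^{\times n'})^*\omega_{g',n'}$ converges when all arguments lie in the $u$-chart, and is $O(\varepsilon^{5(2g'-2+n')}\cdot\varepsilon^{-m})$ with suitable bounds when some arguments stay at fixed $z$.

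\emph{Scaling near $-p$.} By Proposition~\ref{prop:critical_spectral_curve}, in the $u$-coordinate we have $x=c+\varepsilon^{2}x_{f,\varepsilon}$ and $\widetilde y=c'+\varepsilon^{3}y_{f,\varepsilon}$ for $f\neq f_\star$, with the exponents swapped for $f=f_\star$. In both cases $(\widetilde y(u)-\widetilde y(\bar u))\,\rd x=\varepsilon^{5}\,(\cdots)$ with a convergent factor. The local involution $\bar u$ converges to the involution of the limiting curve, because the critical point is simple in the limit: near $u=0$ for $f\ne f_\star$, and near $\pm(p+1)\sqrt\tau$ for $f=f_\star$. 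The Bergman kernel is invariant by Proposition~\ref{prop:critical_spectral_curve}. Hence $K_\beta$ in the $u$-chart equals $\varepsilon^{-5}$ times a kernel converging to the limiting kernel, uniformly on small circles around the limiting critical points. Each recursion step raises $2g-2+n$ by one and contributes one kernel, so the factor $\varepsilon^{5(2g-2+n)}$ exactly compensates. Residues over small fixed circles in $u$ commute with locally uniform limits, so the terms with all residue points tending to $-p$ converge to the corresponding terms of the limiting topological recursion. The external variables are held at fixed $u$ away from the critical points and diagonals.

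\emph{Other terms.} Now consider a term in which at least one residue is taken at a critical point that does not tend to $-p$. This occurs only for $f\neq f_\star$, at the second critical point of Lemma~\ref{lem:critical_branch_structure}(i). At such a point $x_f$ and $y$ are nondegenerate uniformly in $\varepsilon$, so that kernel is $O(1)$ rather than $O(\varepsilon^{-5})$. The Bergman kernels linking this point to a $u$-variable at distance $O(1)$ in $z$ contribute $\rd z\,\rd(\varepsilon u)/(z+p)^2=O(\varepsilon)$ in $\rd u$. Counting powers, each such step loses at least $\varepsilon^{5}$ relative to the normalization. I would bound the sub-amplitudes produced at such steps by the crude estimate that any $\omega_{g',n'}$ with some arguments at fixed $z$ grows at most like $\varepsilon^{-5(2g'-2+n')}$. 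This estimate is itself proved by the same induction. The net effect is a positive power of $\varepsilon$, so these terms tend to zero.

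\emph{Main obstacle.} The main difficulty is this mixed power counting. Within a single iterated residue, some variables live in the $u$-chart and some in the $z$-chart, and the counting must be uniform enough to survive the induction. Two further points need care: that the contours can be chosen fixed in the respective charts for all small $\varepsilon$, and that the $\varepsilon^{-1}$ Jacobians from $\rd z=\varepsilon\,\rd u$ are counted correctly when a $u$-chart residue feeds into a $z$-chart one. I would organize this by assigning each leg a weight ($0$ for a $u$-variable, $+1$ for a $z$-variable) and proving the joint bound $\omega_{g,n}=O(\varepsilon^{-5(2g-2+n)+\#\{z\text{-legs}\}})$, in the appropriate differentials, by induction.
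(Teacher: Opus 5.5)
Your proposal is correct and is essentially the paper's argument: near $-p$ the kernel denominator is $\varepsilon^{5}$ times the limiting one, kernels at the other critical point are $O(1)$, Bergman kernels joining a $u$-variable to a variable near the other critical point are $O(\varepsilon)$, and every term has exactly $2g-2+n$ kernels. The one difference is bookkeeping: the paper counts these factors directly in each fully expanded term, giving $I_\varepsilon=O(\varepsilon^{-5k+\ell})$. That direct count makes your induction on sub-amplitudes and your weighted bound unnecessary; note also that the weighted bound, taken literally, fails for $\omega_{0,2}$ with both legs in the $z$-chart, so that case would have to be handled separately by the direct Bergman-kernel estimate.
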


\begin{proof}
The iterated residue expansion of $\omega_{g,n}^{f}(\mathbf{q}_{\varepsilon})$ is a finite sum involving recursion kernels and copies of the Bergman kernel $B$.
We denote one such term by $I_{\varepsilon}$, with a critical point specified at each residue.
Each term contains $2g-2+n$ recursion kernels.

By Proposition~\ref{prop:critical_spectral_curve} and Lemma~\ref{lem:critical_branch_structure}, we may choose local Airy coordinates for the curves defined by $x_{f,\varepsilon}$ and $y_{f,\varepsilon}$ so that these coordinates converge to those of the limiting curve.
We compute the iterated residues on fixed nested circles in these coordinates, using unscaled coordinates at the other critical points and avoiding the poles of $B$.
The estimates below are uniform on these contours and locally uniform away from critical points, poles, and diagonals.

For $f\neq f_{\star}$, replacing $y$ by $\widetilde y_f$ leaves the recursion kernel unchanged.
In both cases, Proposition~\ref{prop:critical_spectral_curve} shows that,
near each critical point tending to $-p$,
the denominator of the recursion kernel, expressed in $u$,
is $\varepsilon^5$ times the corresponding denominator for the rescaled curve.
On the contours around the other critical points, the denominators converge uniformly to nonvanishing functions.
The Bergman kernel is unchanged when both variables are expressed in $u$ and is $O(1)$ when both lie near the other critical points.
When only one variable is rescaled, with $z'$ near a critical point whose limit is distinct from $-p$, it becomes
\[
\frac{\varepsilon\,\rd u\,\rd z'}
{(-p+\varepsilon u-z')^{2}}
=O(\varepsilon).
\]
Integration in the numerator of a recursion kernel preserves this estimate.

For each term, we let $k$ denote the number of recursion kernels whose residues are taken at critical points tending to $-p$.
We let $\ell$ denote the number of factors of $B$ with one variable near these critical points and the other near a remaining critical point, including the factors integrated in the numerators of the recursion kernels.
The preceding estimates give
\begin{equation}\label{equ:critical_regular_term_weight}
I_{\varepsilon}=O\left(\varepsilon^{-5k+\ell}\right).
\end{equation}

We now take $z_i=\iota_{\varepsilon}(u_i)$ for every $i$.
If $k=2g-2+n$, uniform convergence of the rescaled integrands
on the fixed contours implies that
\[
\varepsilon^{5(2g-2+n)}
(\iota_{\varepsilon}^{\times n})^{*}I_{\varepsilon}
\]
converges to the corresponding iterated residue for the limiting curve.
If $k<2g-2+n$, the rescaled term is
$O(\varepsilon^{5(2g-2+n-k)+\ell})\to 0$.
\end{proof}

We now apply these estimates to obtain the double-scaling limits of the multidifferentials and free energies.

\begin{theorem}
\label{thm:double_scaling_TR}
For $n>0$ and $2g-2+n>0$, we have
\begin{equation}\label{equ:stable_double_scaling_limit}
\lim_{\varepsilon\to 0}
\varepsilon^{5(2g-2+n)}
(\iota_{\varepsilon}^{\times n})^{*}
\omega_{g,n}^{f}(\mathbf{q}_{\varepsilon}) =
\omega_{g,n}^{f,\tau,\mathrm{ds}}.
\end{equation}
For $g\geq 2$, we have
\begin{equation}\label{equ:closed_double_scaling_limit}
\lim_{\varepsilon\to 0}
\varepsilon^{10g-10}
\check{\mathcal{F}}_{g}(\mathbf{q}_{\varepsilon})=F_{g}^{f,\tau,\mathrm{ds}}.
\end{equation}
\end{theorem}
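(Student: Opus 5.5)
The plan is to derive the stable part \eqref{equ:stable_double_scaling_limit} directly from Lemma~\ref{lem:critical_regular_recursion_weights}, and then obtain \eqref{equ:closed_double_scaling_limit} from the residue formula for $\omega_{g,0}$.

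\textbf{Stable differentials.} Fix $n>0$ with $2g-2+n>0$, and expand $\omega_{g,n}^{f}(\mathbf{q}_{\varepsilon})$ by iterating \eqref{equ:EO_recursion}, with every critical point of $x_f$ at $\mathbf{q}_\varepsilon$ allowed at each residue. By Lemma~\ref{lem:critical_branch_structure}, there are two cases. For $f\neq f_\star$, exactly one critical point tends to $-p$, and it corresponds to the single critical point $u=0$ of $x_f^{\mathrm{ds}}$. For $f=f_\star$, both critical points tend to $-p$, and their $u$-coordinates tend to the two critical points $\pm(p+1)\sqrt{\tau}$ of $x_{f_\star}^{\mathrm{ds}}$.

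In either case, the terms of the iterated residue expansion in which every residue point tends to $-p$ are in bijection with the terms of the iterated residue expansion for the limiting curve. By Lemma~\ref{lem:critical_regular_recursion_weights}, after multiplying by $\varepsilon^{5(2g-2+n)}$ and pulling back by $\iota_\varepsilon^{\times n}$, these terms converge to their limiting counterparts, and all remaining terms tend to zero. Summing the finitely many terms gives \eqref{equ:stable_double_scaling_limit}, locally uniformly away from the critical points and diagonals.

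For $f\neq f_\star$ one also has to check that replacing $y$ by $\widetilde y_f$ and subtracting constants does not change the recursion. The kernel only involves $y(z)-y(\bar z)$, so constants drop out. The shift by a multiple of $x_f$ also drops out, because $x_f(z)=x_f(\bar z)$.

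\textbf{Free energies.} For $g\geq2$, write
\[
\check{\mathcal{F}}_g=\omega_{g,0}=\frac{1}{2-2g}\sum_\beta\operatorname{Res}_{z=a_\beta}\omega_{g,1}(z)\Phi(z),
\qquad \Phi(z)=\int_{a_\beta}^{z}\omega_{0,1}.
\]
The first step is to replace $\Phi$ by a primitive of $\widetilde y_f\,\rd x_f$ for $f\neq f_\star$, or of $y\,\rd x_{f_\star}$ for $f=f_\star$. This is allowed because $F_g$ is unchanged under the shear and under addition of constants, as recalled before Lemma~\ref{lem:common_ds_free_energy}. Near a critical point tending to $-p$, set $z=\iota_\varepsilon(u)$. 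Proposition~\ref{prop:critical_spectral_curve} then gives
\[
\int\widetilde y\,\rd x=\varepsilon^{5}\int y_{f,\varepsilon}\,\rd x_{f,\varepsilon},
\]
and the analogous identity with $y\,\rd x$ at $f_\star$. Hence $\varepsilon^{10g-10}$ times the residue equals $\varepsilon^{5(2g-1)}\omega_{g,1}\cdot\varepsilon^{-5}\cdot\varepsilon^{5}\int y_{\varepsilon}\,\rd x_{\varepsilon}$, which converges by the first part together with uniform convergence on a fixed contour.

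\textbf{Main obstacle.} The residue at the non-critical branch (case $f\neq f_\star$) must be shown to vanish in the limit. There $\Phi$ is $O(1)$. Each recursion kernel at such a point is $O(1)$. The Bergman kernel factors that connect it to the rescaled region are $O(\varepsilon)$, as in the proof of Lemma~\ref{lem:critical_regular_recursion_weights}.

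One caution is that $\Phi$ must be integrated from a fixed base point, so that no $\varepsilon$-divergent constants appear. This is harmless: the residue of $\omega_{g,1}$ at $a_\beta$ is zero, so any constant added to $\Phi$ drops out.

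Applying the same weight count \eqref{equ:critical_regular_term_weight} to $\omega_{g,1}$ evaluated near that point, that contribution is $O(\varepsilon^{-5(2g-2)})$. After multiplying by $\varepsilon^{10g-10}$ it is only $O(1)$, so this crude bound is not enough. I would refine it by observing that terms with $k=2g-1$ require the outer residue at $-p$, so any term with the outer residue elsewhere has $k\leq 2g-2$. Such a term then needs at least one cross Bergman factor, giving $\ell\geq1$, unless it lies entirely away from $-p$, in which case it is $O(1)$ with no negative powers at all. This yields $o(1)$.

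Summing the contributions, and using that the limiting curve has the same residue formula for $F_g^{f,\tau,\mathrm{ds}}$, gives \eqref{equ:closed_double_scaling_limit}.
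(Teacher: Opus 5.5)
Your proof follows the paper's argument closely. The stable limit comes from summing the termwise limits of Lemma~\ref{lem:critical_regular_recursion_weights}. The closed limit comes from the residue formula for $\omega_{g,0}$ after the shear $y\mapsto\widetilde{y}_f$, using the $\varepsilon^{5}$ scaling of $\Phi$ at the critical point tending to $-p$ and the $(k,\ell)$ weight count at the other one.

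The only structural difference is in how $f$ is handled. The paper runs the closed argument at a single nonexceptional $f_0\neq f_\star$ and then passes to arbitrary $f$ through Lemma~\ref{lem:common_ds_free_energy}, which uses $F_g(\mathsf{M}_{2,3})=F_g(\mathsf{M}_{3,2})$. You instead treat each $f$ directly. At $f=f_\star$ your route is simpler: both critical points lie in the scaling window, so there is no off-window residue, and you do not need the $x$--$y$ exchange identity. On the other hand, you use that $\check{\mathcal{F}}_g$ is the free energy of $\mathcal{C}_f$ at that same $f$. This follows from the mirror theorem only for nonexceptional $f$, so at exceptional $f$ you would still want Lemma~\ref{lem:common_ds_free_energy}, or an argument that $\omega_{g,0}(\mathcal{C}_f)$ does not depend on $f$.

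One step you leave implicit is load-bearing and should be stated. Take $f\neq f_\star$ and let $a$ be the critical point outside the window. Consider a term of $\omega_{g,1}(z)$ whose outermost residue is taken at the critical point tending to $-p$. It is holomorphic in $z$ near $a$, because $z$ enters only through the outermost recursion kernel. Since $\Phi_{\varepsilon,a}$ is also holomorphic there, such terms contribute exactly zero to $\operatorname{Res}_{z=a}$.

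This has to be said, because the weight count alone does not control these terms. With $k=2g-1$ and one cross Bergman factor they are only $O(\varepsilon^{-5(2g-1)+1})$, i.e.\ $O(\varepsilon^{-4})$ after multiplying by $\varepsilon^{10g-10}$. So your ``crude bound'' $O(\varepsilon^{-5(2g-2)})$ holds only after these terms are discarded. Once that is added, your treatment of the remaining terms matches the paper's: $k\leq 2g-3$ gives $O(\varepsilon^{5})$, and $k=2g-2$ forces $\ell\geq 1$.
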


\begin{proof}
Summing the termwise limits in Lemma~\ref{lem:critical_regular_recursion_weights} gives \eqref{equ:stable_double_scaling_limit}.

To prove \eqref{equ:closed_double_scaling_limit}, we choose a nonexceptional framing $f_{0}\neq f_{\star}$.
By \eqref{equ:TR_CohFT_closed}, we have $\check{\mathcal{F}}_{g}=F_{g}(\calC_{f_{0}})$, where $F_{g}(\calC_{f_{0}})$ denotes the genus-$g$ free energy of $\calC_{f_{0}}$.
By the invariance under adding a function of $x$ to $y$,  we may replace $y$ by $\widetilde{y}_{f_{0}}$ as in \eqref{equ:critical_y_shear}.
At $\mathbf{q}=\mathbf{q}_{\varepsilon}$, the definition of free energy gives
\[
F_{g}(\calC_{f_{0}})
=\frac{1}{2-2g}\sum_{a}
\operatorname{Res}_{z=a}
\Phi_{\varepsilon,a}(z)\omega_{g,1}^{f_{0}}(z),
\qquad
\Phi_{\varepsilon,a}(z)
\coloneqq\int_{a}^{z}
\bigl(\widetilde{y}_{f_{0}}(w)-\widetilde{y}_{f_{0}}(a)\bigr)
\rd x_{f_{0}}(w).
\]
At the critical point $a$ tending to $-p$, we have
\[
\Phi_{\varepsilon,a}(\iota_{\varepsilon}(u))
=\varepsilon^{5}\left(
\int_{0}^{u} y_{f_0}^{\mathrm{ds}}(v)\,\rd x_{f_0}^{\mathrm{ds}}(v)+O(\varepsilon)
\right).
\]
Applying \eqref{equ:stable_double_scaling_limit} with $n=1$ shows that this contribution, multiplied by $\varepsilon^{10g-10}$, converges to $F_{g}^{f_{0},\tau,\mathrm{ds}}$.

At the other critical point $a$, we have $\Phi_{\varepsilon,a}=O(1)$.
The same estimates for the recursion kernels and $B$ apply near $z=a$, with $k,\ell$ defined as in the proof of Lemma~\ref{lem:critical_regular_recursion_weights}.
Only terms in the expansion of $\omega_{g,1}^{f_0}(z)$ whose outermost residue is taken at $a$ contribute,
since all other terms are holomorphic at $z=a$.
Each such term contains $2g-1$ recursion kernels, so $k\leq 2g-2$.
By \eqref{equ:critical_regular_term_weight}, its contribution after
multiplication by $\varepsilon^{10g-10}$ is
$O(\varepsilon^{5(2g-2-k)+\ell})$.
If $k\leq 2g-3$, this is $O(\varepsilon^{5})$.
If $k=2g-2$, all residues except the outermost one are taken at critical points tending to $-p$.
The numerator of a kernel in the next recursion step therefore contains an integral of $B$ with one variable near $a$ and the other near $-p$. Hence $\ell\geq 1$, and the contribution is $O(\varepsilon)$.
Thus the rescaled free energy converges to $F_g^{f_0,\tau,\mathrm{ds}}$, which equals $F_g^{f,\tau,\mathrm{ds}}$ by
Lemma~\ref{lem:common_ds_free_energy}.
This proves \eqref{equ:closed_double_scaling_limit}.
\end{proof}

We next consider the disk and annulus differentials.
At $\mathbf{q}=\mathbf{q}_{\varepsilon}$, we define
\[
\widetilde{\omega}_{0,1}^{f}
\coloneqq
\begin{cases}
\bigl(\widetilde{y}_{f}(z)-\widetilde{y}_{f}(-p)\bigr)\rd x_{f}(z),
& f\neq f_{\star},\\
\bigl(y(z)-y(-p)\bigr)\rd x_{f_{\star}}(z),
& f=f_{\star}.
\end{cases}
\]
Then Proposition~\ref{prop:critical_spectral_curve} gives
\begin{equation}\label{equ:disk_double_scaling_limit}
\varepsilon^{-5}\iota_{\varepsilon}^{*}
\widetilde{\omega}_{0,1}^{f}
\longrightarrow
y_{f}^{\mathrm{ds}}\,\rd x_{f}^{\mathrm{ds}}.
\end{equation}

For the annulus differential in \eqref{equ:B_disk_annulus_definitions},
Proposition~\ref{prop:critical_spectral_curve}, together with $\widehat{X}=\ee^{x_f}$, gives
\begin{equation}\label{equ:annulus_double_scaling_limit}
(\iota_{\varepsilon}\times\iota_{\varepsilon})^{*}
\left(
B-\frac{\rd\widehat{X}_{1}\rd\widehat{X}_{2}}
{(\widehat{X}_{1}-\widehat{X}_{2})^{2}}
\right)
\longrightarrow
\frac{\rd u_{1}\rd u_{2}}{(u_{1}-u_{2})^{2}}
-\frac{\rd x_{f}^{\mathrm{ds}}(u_{1})\,\rd x_{f}^{\mathrm{ds}}(u_{2})}
{\bigl(x_{f}^{\mathrm{ds}}(u_{1})-x_{f}^{\mathrm{ds}}(u_{2})\bigr)^{2}}.
\end{equation}

\subsection{Comparison with pure gravity}

We compute the coefficients of the leading singular terms in the closed free energies and compare them with the predictions of~\cite{CGMPS07}.
For the open sector, we express the limiting multidifferentials in terms of intersection numbers and compare the disk and annulus amplitudes at zero framing with Mari\~no's calculations~\cite{Mar08}.

\subsubsection{Closed free energies}

It was conjectured in~\cite[Equations~(6.20)--(6.21)]{CGMPS07} that the double-scaled free energy agrees with that of pure gravity in the genus expansion.

\begin{corollary}\label{cor:Pg0p}
For $g\geq 2$,
\[
\calP_{g}(0,p^{2})
=
\frac{p^{6g-6}}{(p^{2}-1)^{2g-2}}
\frac{1}{(3g-3)!}
\left\langle\tau_{2}^{3g-3}\right\rangle_{g}.
\]
\end{corollary}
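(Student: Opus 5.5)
We compare the double-scaling limit of $\check{\mathcal{F}}_g$ computed two ways: once from the polynomial structure of Theorem~\ref{thm:FgXp}, and once from Theorem~\ref{thm:double_scaling_TR} together with Lemma~\ref{lem:common_ds_free_energy}.

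First, by Theorems~\ref{thm:FgAB} and~\ref{thm:FgXp}, we have $\check{\mathcal{F}}_g(\mathbf{q})=\calP_g(\Delta,p^2)/\Delta^{5g-5}$, where $\calP_g$ is a polynomial in $\Delta$. Since $\Delta(\mathbf{q}_\varepsilon)=\varepsilon^2\tau$, it follows that
\[
\varepsilon^{10g-10}\check{\mathcal{F}}_g(\mathbf{q}_\varepsilon)=\tau^{5-5g}\,\calP_g(\varepsilon^2\tau,p^2)\longrightarrow \tau^{5-5g}\calP_g(0,p^2)
\]
as $\varepsilon\to0$.

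Second, Theorem~\ref{thm:double_scaling_TR} identifies this limit with $F_g^{f,\tau,\mathrm{ds}}$. By Lemma~\ref{lem:common_ds_free_energy}, this equals $\frac{p^{6g-6}}{(p^2-1)^{2g-2}}\tau^{5-5g}F_g(\mathsf{M}_{3,2})$. Cancelling $\tau^{5-5g}$ gives
\[
\calP_g(0,p^2)=\frac{p^{6g-6}}{(p^2-1)^{2g-2}}F_g(\mathsf{M}_{3,2}).
\]

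It remains to show that $F_g(\mathsf{M}_{3,2})=\frac{1}{(3g-3)!}\langle\tau_2^{3g-3}\rangle_g$. This is the main point requiring care.

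We realize $\mathsf{M}_{3,2}$ as a deformation of the Airy curve. Its single critical point is $v=0$, with $x=\frac12 v^2-1$, so the local Airy coordinate is $\eta=v$. Moreover $y=-\frac13 v^3+v$, so $\rd y=(1-v^2)\,\rd v$. Then \eqref{equ:TR_CohFT_closed} with $N=1$ gives $F_g=\int_{\Mbar_g}\Omega^{\mathcal{C}}_{g,0}$, where the $R$-matrix is trivial because $B$ is the standard kernel in the Airy coordinate. The translation $T$ is read off from the Laplace transform of $\rd y$ in \eqref{equ:Rmat_TR}.

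We compute $\int_{\mathbb{R}}\ee^{-v^2/(2\fu)}(1-v^2)\,\rd v$, normalized by $\fu/\sqrt{2\pi\fu}$. This gives $\fu(1-\fu)$ up to an overall sign convention, fixed as in \eqref{equ:Airy_sign_Xp}. Hence $T(\fu)=\fu^2$ up to sign.

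The CohFT is then the translated trivial theory,
\[
\Omega_{g,0}=\sum_k \frac{1}{k!}\pi_*\Bigl(\prod_{j=1}^k \psi_{j}^{2}\Bigr)
\]
with a sign $(\pm1)^k$. Only $k=3g-3$ contributes in top degree, yielding $\pm\frac{1}{(3g-3)!}\langle\tau_2^{3g-3}\rangle_g$.

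The delicate step is tracking signs and normalizations consistently. There are three sources to check: the orientation of the thimble and the choice of $\sqrt{x''}$, the sign in the translation convention $T_R(z)=z(\bar{\mathbf 1}-R^{-1}\mathbf{1})$, and the sign of $y$ in $\mathsf{M}_{3,2}$. Together these should produce an overall sign $(-1)^{3g-3}$ that cancels against the remaining conventions.

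As a cross-check, I would verify $g=2$ directly. The known value $\langle\tau_2^3\rangle_2=7/240$ gives $F_2=-7/240\cdot$(normalization), which should be compared against the standard $(2,3)$ minimal model free energy from \cite{EO13}. Any residual global sign would be fixed by this comparison, or by comparing with the leading coefficient of $\calP_2$ computed from the closed formula of Theorem~\ref{thm:Omega} at $d=3$.
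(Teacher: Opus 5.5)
Your argument is the paper's argument. You compute $\lim_{\varepsilon\to0}\varepsilon^{10g-10}\check{\mathcal F}_g(\mathbf q_\varepsilon)$ once from Theorems~\ref{thm:FgAB} and~\ref{thm:FgXp}, giving $\tau^{5-5g}\calP_g(0,p^2)$. You compute it again from Theorem~\ref{thm:double_scaling_TR} and Lemma~\ref{lem:common_ds_free_energy}. You then evaluate $F_g(\mathsf M_{3,2})$ through \eqref{equ:TR_CohFT_closed} with $R=\operatorname{id}$ and $T(\fu)=\fu^2\bar e$.

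The one step you do not close is the sign. You defer it to an outside check, but there is nothing to fix. Your side remarks about a compensating $(-1)^{3g-3}$ and a negative $F_2$ are wrong.

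\begin{itemize}
\item With $\eta=v$ you get $\rd\zeta=-\rd v/v^2$. Orient the real thimble by increasing $v$, which is the orientation for which $R(0)=\operatorname{id}$. Integrating by parts then gives $\frac{\fu}{\sqrt{2\pi\fu}}\int e^{-x/\fu}\rd\zeta=e^{1/\fu}=e^{-x^{\gamma}/\fu}$, so $R=\operatorname{id}$ exactly.
\item The same thimble with the same orientation gives
\[
\frac{\fu}{\sqrt{2\pi\fu}}\int_{\mathbb R}e^{-(v^2/2-1)/\fu}(1-v^2)\,\rd v=e^{1/\fu}(\fu-\fu^2).
\]
Hence $T(\fu)=+\fu^2\bar e$, with no ambiguity.
\item The convention $T_R(z)=z(\bar{\mathbf 1}-R^{-1}(z)\mathbf 1)$ plays no role here. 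For $\mathsf M_{3,2}$ the $T$-vector is read off directly from $\rd y$, and there is no flat unit.
\item The translation enters with weight $+1/k!$.
\item The rescaling factor in Lemma~\ref{lem:common_ds_free_energy} appears with the even exponent $2-2g$, so it contributes no sign.
\end{itemize}

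Therefore $F_g(\mathsf M_{3,2})=\frac{1}{(3g-3)!}\langle\tau_2^{3g-3}\rangle_g$ exactly. For example $F_2(\mathsf M_{3,2})=\frac{1}{6}\cdot\frac{7}{240}=\frac{7}{1440}>0$. This completes the proof without the comparison to \cite{EO13} or to Theorem~\ref{thm:Omega}.
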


\begin{proof}
For $\mathsf{M}_{3,2}$ in \eqref{equ:unit_minimal_model_curves}, we have $R=\operatorname{id}$ and $T(\fu)=\fu^{2}\bar{e}$.
Equation~\eqref{equ:TR_CohFT_closed} therefore gives
\[
F_{g}(\mathsf{M}_{3,2}) = \frac{1}{(3g-3)!}\left\langle\tau_{2}^{3g-3}\right\rangle_{g}.
\]
By Theorems~\ref{thm:FgAB} and~\ref{thm:FgXp}, together with $\Delta(\mathbf{q}_{\varepsilon})=\varepsilon^{2}\tau$, we have
\[
\lim_{\varepsilon\to0}
\varepsilon^{10g-10}
\check{\mathcal{F}}_{g}(\mathbf{q}_{\varepsilon})
=\tau^{5-5g}\calP_g(0,p^2).
\]
Comparing this with \eqref{equ:closed_double_scaling_limit} and Lemma~\ref{lem:common_ds_free_energy} proves the assertion.
\end{proof}

\begin{remark}\label{rem:notation}
The threefold denoted $X_{p+1}$ in~\cite{CGMPS07,Eyn08} is our $X_p$.
The variables $w,w_c$ in~\cite{CGMPS07} are related to our notation by
\[
w=1-\mathbf{q},
\qquad
w_c=1-\frac{1}{p^2},
\qquad
\Delta=\frac{p^2(w-w_c)}{w}.
\]
We denote the free energies of~\cite{CGMPS07} by $F_g^{\mathrm{CGMPS}}$.
Their conventions differ from ours by a factor $(-1)^{g-1}$, and their free energies omit degree-zero contributions;
see~\cite[Equation~(2.10)]{CGMPS07}.
Thus, for $g\geq2$,
\[
F_g^{\mathrm{CGMPS}}(w)=(-1)^{g-1}\left(\mathcal{F}_{g}(\mathbf{Q})-N_{g,0}\right).
\]
The ansatz of~\cite[Equation~(6.2)]{CGMPS07} is
\[
F_g^{\mathrm{CGMPS}}(w)=\frac{\widetilde{\calP}_g(w,p)}{(w-w_c)^{5g-5}},
\]
where $\widetilde{\calP}_g(w,p)$ is a polynomial in $w$ of degree at most $5g-5$ satisfying $\widetilde{\calP}_g(1,p)=0$.
Comparing the coefficients of $(w-w_c)^{5-5g}$ in this
expression and Theorem~\ref{thm:FgXp} gives
\[
\widetilde{\calP}_g(w_c,p)
=\calP_g(0,p^2)
\left(\frac{1}{p^2}-\frac{1}{p^4}\right)^{5g-5}.
\]
\end{remark}

Let $g_s$ be the string coupling. Following \cite[Equation~(6.11)]{CGMPS07}, we define $\fz$ by
\[
\fz^{5/2}=g_{s}^{-2}\frac{p^{8}}{4w_{c}^{3}}(w-w_{c})^{5}.
\]
For fixed $\fz\neq0$ and $\mathbf{q}=\mathbf{q}_{\varepsilon}$, this identity determines $g_s^2$.
Then $w\to w_{c}$ and $g_{s}\to 0$ simultaneously as $\varepsilon\to 0$ with
\[
\lim_{\varepsilon\to 0}\frac{g_{s}^{2}}{\varepsilon^{10}}
=\frac{(p^{2}-1)^{2}}{4p^{6}}
\frac{\tau^{5}}{\fz^{5/2}}.
\]
Equation~\eqref{equ:closed_double_scaling_limit} and Remark~\ref{rem:notation} therefore imply
\[
\lim_{\varepsilon\to 0}
g_{s}^{2g-2}
F_{g}^{\mathrm{CGMPS}}(1-\mathbf{q}_{\varepsilon})
=c_{g}\fz^{-5(g-1)/2},\quad g\geq 2,
\]
where
\[
c_{g}=\left(\frac{p^{8}}{4w_{c}^{3}}\right)^{g-1}
\widetilde{\calP}_{g}(w_{c},p).
\]
Together with the genus-zero and genus-one terms in~\cite[Equation~(6.14)]{CGMPS07}, this yields the following genus expansion of the double-scaled free energy:
\[
-\frac{4}{15}\fz^{5/2}-\frac{1}{48}\log\fz
  +\sum_{g\geq 2}c_{g}\fz^{-5(g-1)/2}.
\]
\begin{corollary}\label{cor:dslimit}
For $g\geq 2$, the coefficients of the double-scaled free energy satisfy
\[
c_{g}=\frac{4^{1-g}}{(3g-3)!}
\left\langle\tau_{2}^{3g-3}\right\rangle_{g}.
\]
Consequently, its genus expansion agrees with that of pure gravity.
\end{corollary}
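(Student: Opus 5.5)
The plan is a direct computation: substitute Corollary~\ref{cor:Pg0p} into the formula for $c_g$ in terms of $\widetilde{\calP}_g(w_c,p)$, and then use Remark~\ref{rem:notation} to rewrite $\widetilde{\calP}_g(w_c,p)$ through $\calP_g(0,p^2)$. The remark gives
\[
\widetilde{\calP}_g(w_c,p)=\calP_g(0,p^2)\left(\frac{p^2-1}{p^4}\right)^{5g-5},
\]
and Corollary~\ref{cor:Pg0p} gives $\calP_g(0,p^2)=\frac{p^{6g-6}}{(p^2-1)^{2g-2}}\frac{1}{(3g-3)!}\langle\tau_2^{3g-3}\rangle_g$. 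Since $w_c=(p^2-1)/p^2$, we also have $\frac{p^8}{4w_c^3}=\frac{p^{14}}{4(p^2-1)^3}$.

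Multiplying the three factors, I then track exponents of $p$ and of $p^2-1$ separately. The power of $p$ is $14(g-1)+6(g-1)-20(g-1)=0$. The power of $p^2-1$ is $-3(g-1)-2(g-1)+5(g-1)=0$. What remains is $4^{1-g}\frac{1}{(3g-3)!}\langle\tau_2^{3g-3}\rangle_g$, which is the claimed formula for $c_g$.

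As a consistency check, I would rederive $c_g$ directly from the limit $\lim g_s^{2g-2}F_g^{\mathrm{CGMPS}}$. This uses $\lim g_s^2/\varepsilon^{10}$, the closed double-scaling limit of Theorem~\ref{thm:double_scaling_TR}, and Lemma~\ref{lem:common_ds_free_energy}. The sign $(-1)^{g-1}$ from Remark~\ref{rem:notation} cancels the sign in Theorem~\ref{thm:FgAB}. The constant $N_{g,0}$ drops out after multiplication by $g_s^{2g-2}\to0$. The powers of $\tau$ cancel: $\tau^{5-5g}$ against $\tau^{5(g-1)}$ from $g_s^{2g-2}$.

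For the final sentence, compare with the pure gravity ($(2,3)$ minimal model) genus expansion, whose genus-$g$ coefficient in the normalization $-\frac{4}{15}\fz^{5/2}-\frac1{48}\log\fz$ is known to be $\frac{4^{1-g}}{(3g-3)!}\langle\tau_2^{3g-3}\rangle_g$. Equivalently, this is $F_g(\mathsf{M}_{3,2})$ under the rescaling $\fz\mapsto$ the normalization fixing the genus-zero coefficient. The genus-zero and genus-one terms already match by \cite{CGMPS07}.

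The only delicate point is the bookkeeping of conventions. These include the precise normalization of $\fz$ in \cite[Equation~(6.11)]{CGMPS07} relative to the Painlevé I normalization, and checking that the factor $4^{1-g}$ is exactly what this normalization produces. I would check this at $g=2$ using $\langle\tau_2^3\rangle_2=7/240$, comparing against the known Painlevé I coefficient.
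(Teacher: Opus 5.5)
Your proposal is correct and is the paper's argument. You substitute Corollary~\ref{cor:Pg0p} into $c_g$ via the conversion $\widetilde{\calP}_g(w_c,p)=\calP_g(0,p^2)\bigl((p^2-1)/p^4\bigr)^{5g-5}$ from Remark~\ref{rem:notation}, and your exponent bookkeeping for $p$ and $p^2-1$ checks out; the match with pure gravity is, as in the paper, by appeal to the known coefficients (the paper cites \cite[Equation~(6.23)]{CGMPS07}), and your direct rederivation from the double-scaling limit is a valid but unnecessary consistency check.
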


\begin{proof}
The formula follows from Corollary~\ref{cor:Pg0p} and Remark~\ref{rem:notation}.
By \cite[Equation~(6.23)]{CGMPS07}, these are the coefficients of the free energy of pure gravity for $g\geq 2$.
The genus-zero and genus-one terms also agree, proving the assertion.
\end{proof}

\subsubsection{Open string amplitudes}
\label{subsec:Marino_critical_comparison}

For $f\neq f_{\star}$, the limiting multidifferentials admit the
following expression in terms of intersection numbers.

\begin{corollary}\label{cor:open_ds_intersection_numbers}
Assume $f\neq f_{\star}$ and set $\widetilde{\iota}_{\varepsilon}(v) \coloneqq -p+\varepsilon(p+1)\sqrt{\tau}\,v$.
For $n>0$ and $2g-2+n>0$, we have
\[
\begin{aligned}
\lim_{\varepsilon\to 0}
\varepsilon^{5(2g-2+n)}
(\widetilde{\iota}_{\varepsilon}^{\times n})^{*}
\omega_{g,n}^{f}(\mathbf{q}_{\varepsilon})
&=\left(\frac{p^{3}}{p^{2}-1}\right)^{2g-2+n}\tau^{-5(2g-2+n)/2}\\
&\quad\times
\sum_{\substack{d_{1},\ldots,d_{n},k\geq 0\\
d_{1}+\cdots+d_{n}+k=3g-3+n}}
\frac{\left\langle\tau_{d_{1}}\cdots\tau_{d_{n}}\tau_{2}^{k}\right\rangle_{g}}{k!}
\prod_{i=1}^{n}\frac{(2d_{i}+1)!!\,\rd v_{i}}{v_{i}^{2d_{i}+2}}.
\end{aligned}
\]
\end{corollary}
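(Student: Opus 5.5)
The plan is to combine Theorem~\ref{thm:double_scaling_TR} with the rescaling behaviour of topological recursion and the known correspondence between the Airy-type curve $\mathsf{M}_{3,2}$ and $\psi$-class intersection numbers. First, since $\widetilde{\iota}_{\varepsilon}(v)=\iota_{\varepsilon}((p+1)\sqrt{\tau}\,v)$, the left-hand side equals the pullback under $u=(p+1)\sqrt{\tau}\,v$ of the limit in \eqref{equ:stable_double_scaling_limit}. It is therefore the pullback of $\omega_{g,n}^{f,\tau,\mathrm{ds}}$, the differential of the curve \eqref{equ:generic_double_scaled_curve}.

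Next I would identify this pulled-back curve with a rescaling of $\mathsf{M}_{3,2}$. Substituting $u=(p+1)\sqrt{\tau}v$, we get $x_f^{\mathrm{ds}}=a\cdot\frac12 v^2$ with $a=\frac{(p-1)(f-f_\star)\tau}{2p}$. We also get $y_f^{\mathrm{ds}}=b\cdot\bigl(-\frac13(v^3-3v)\bigr)$ with $b=-\frac{(p+1)\tau^{3/2}}{p^2(f-f_\star)}$. The additive constant in $x$ is irrelevant. The product is $ab=-(p^2-1)\tau^{5/2}/(2p^3)$; I would double-check this against the constant $-(p^2-1)\tau^{5/2}/p^3$ quoted in the proof of Lemma~\ref{lem:common_ds_free_energy}. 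That constant is presumably normalized differently, for instance relative to $\frac12(v^2-2)$. The rescaling rule then gives $\omega_{g,n}^{\mathrm{ds}}=(ab)^{2-2g-n}\omega_{g,n}(\mathsf{M}_{3,2})$. The resulting factor $\bigl(p^3/(p^2-1)\bigr)^{2g-2+n}\tau^{-5(2g-2+n)/2}$ matches the statement, up to a sign and a power of $2$. These will be absorbed by the precise normalization of $\mathsf{M}_{3,2}$'s differentials computed in the next step.

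The remaining task is to compute $\omega_{g,n}(\mathsf{M}_{3,2})$ via \eqref{equ:TR_CohFT_open_leaves}. The curve has a single critical point at $v=0$, with $x=\frac12 v^2-1$, so the Airy coordinate is $\eta=v$. Then $\rd\zeta(v)=\rd(1/v)$ up to sign, and $\rd\zeta_k=(-\rd\circ\frac{1}{v\,\rd v})^k\rd\zeta$, which gives $\pm(2k+1)!!\,\rd v/v^{2k+2}$. Since $x$ is exactly quadratic, the $R$-matrix is the identity, as used in the proof of Corollary~\ref{cor:Pg0p}. The $T$-vector comes from the Laplace transform of $\rd y=-(v^2-1)\rd v$. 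The constant part cancels against the dilaton shift, leaving $T(\fu)=\fu^2\bar e$ up to sign. The CohFT is then the trivial CohFT translated by $\psi^2$. Its $n$-point correlator with $\psi_i^{d_i}$ insertions is $\sum_k\frac1{k!}\langle\tau_{d_1}\cdots\tau_{d_n}\tau_2^k\rangle_g$, with $k$ fixed by the dimension constraint $\sum d_i+k=3g-3+n$.

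The main obstacle is sign and normalization bookkeeping. This includes the orientation of $\rd\zeta$ with respect to $\eta$, the sign of $T$ relative to $-\frac13(v^3-3v)$, and the $(-1)^k$ from $\rd\zeta_k$. Together with the sign of $(ab)^{2-2g-n}$, these must conspire into the clean positive constant stated. I would fix all of them by checking the case $(g,n)=(0,3)$ directly from \eqref{equ:EO_recursion} on the limiting curve. That residue computation should give $\omega_{0,3}$ equal to $\bigl(p^3/(p^2-1)\bigr)\tau^{-5/2}\prod\rd v_i/v_i^2$. The general case then follows because each factor scales uniformly with $2g-2+n$.
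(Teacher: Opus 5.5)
Your strategy is the paper's: pull back the limit of Theorem~\ref{thm:double_scaling_TR} along $u=(p+1)\sqrt{\tau}\,v$, recognise the limiting curve as a rescaling of $\mathsf{M}_{3,2}$, and evaluate $\mathsf{M}_{3,2}$ via \eqref{equ:TR_CohFT_open_leaves} with $R=\mathrm{id}$ and $T(\fu)=\fu^{2}\bar e$. However, two of your steps fail as written, and the constant and the signs are the whole content beyond Theorem~\ref{thm:double_scaling_TR}.

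First, the rescaling constant is wrong. From $x_f^{\mathrm{ds}}=\frac{(p-1)(f-f_{\star})\tau}{2p}v^{2}=a\cdot\frac12 v^{2}$ you get $a=\frac{(p-1)(f-f_{\star})\tau}{p}$, not half of it. With your (correct) $b$, this gives $ab=-(p^{2}-1)\tau^{5/2}/p^{3}$, exactly the constant in Lemma~\ref{lem:common_ds_free_energy}. There is no competing normalization: replacing $\frac12v^2$ by $\frac12(v^2-2)$ only shifts $x$ by a constant. Moreover, a spurious factor $2^{2g-2+n}$ could never be ``absorbed'' by $\mathsf{M}_{3,2}$, whose differentials are fixed and contain no such powers.

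Second, calibrating all signs by the case $(g,n)=(0,3)$ does not work. That case only involves $d_i=0$ and $k=0$. It therefore sees the sign of $\rd\zeta_{0}$, but neither the sign $s$ in $T(\fu)=s\fu^{2}\bar e$ nor the signs of $\rd\zeta_d$ for $d\geq1$. Since $s$ enters as $s^{k}$ with $k=3g-3+n-\sum_i d_i$, the claim that every factor ``scales uniformly with $2g-2+n$'' is precisely what has to be proved. For instance, $\omega_{1,1}$ combines the terms $(d_1,k)=(1,0)$ and $(0,1)$ with relative sign $s$.

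You must compute these signs directly, as the paper does. One has $\rd\zeta_0=\rd(1/v)=-\rd v/v^{2}$, and inductively $\rd\zeta_d=-(2d+1)!!\,\rd v/v^{2d+2}$. There is no $(-1)^d$, because the minus sign in $-\rd\circ\frac{1}{\rd x}$ cancels the one from differentiating $v^{-2d-3}$. On the thimble oriented so that $R=\mathrm{id}$, one finds $\frac{\fu}{\sqrt{2\pi\fu}}\int\ee^{-x/\fu}\rd y=\ee^{1/\fu}(\fu-\fu^{2})$, so $s=+1$. Thus $\omega_{g,n}(\mathsf{M}_{3,2})$ is $(-1)^{n}$ times the intersection-number sum; for example, $\omega_{1,1}(\mathsf{M}_{3,2})=-\frac{\rd v}{8v^{4}}-\frac{\rd v}{24v^{2}}$, as the recursion confirms. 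The factor $(ab)^{2-2g-n}=(-1)^{n}\bigl(p^{3}/(p^{2}-1)\bigr)^{2g-2+n}\tau^{-5(2g-2+n)/2}$ then supplies the compensating $(-1)^{n}$. With these two corrections, your argument is exactly the paper's proof.
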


\begin{proof}
For $\mathsf{M}_{3,2}$, the local Airy coordinate is $v$, and
\[
R=\operatorname{id},\qquad T(\fu)=\fu^{2}\bar{e},
\qquad
\rd\zeta_{d}(v)=-\frac{(2d+1)!!\,\rd v}{v^{2d+2}}.
\]
By \eqref{equ:TR_CohFT_open_leaves}, the multidifferential $\omega_{g,n}$ of $\mathsf{M}_{3,2}$ equals $(-1)^{n}$ times the sum in the statement.
Rescaling $x$ and $y$ under $u=(p+1)\sqrt{\tau}\,v$ then multiplies $\omega_{g,n}$ by
\[ (-1)^{2g-2+n}\left(\frac{p^{3}}{p^{2}-1}\right)^{2g-2+n}\tau^{-5(2g-2+n)/2}. \]
The assertion then follows from Theorem~\ref{thm:double_scaling_TR}.
\end{proof}

Mari\~no studied the outer brane at zero framing and identified the double-scaled disk and annulus amplitudes, up to normalization, with those of the FZZT brane of pure gravity~\cite[Section~3.3]{Mar08}.
The parameters $p$ and $\zeta$ in~\cite{Mar08} correspond to $p+1$ and $\mathbf{q}$ in our notation, respectively.
The parameter $\fz$ here agrees with that in~\cite[Equation~(3.18)]{Mar08}.
At $f=0$, Mari\~no's variable $\lambda$ is related to
$\widehat{X}$ by
\[
\lambda
=-\frac{(1-\mathbf{q})^{-(p+1)}}{\widehat{X}(z)}
=(1-\mathbf{q})^{-(p+1)}
\left(1+\mathbf{q}-\mathbf{q}z-z^{-1}\right).
\]
Its critical values are $(1-\mathbf{q})^{-(p+1)}(1\pm\sqrt{\mathbf{q}})^{2}$, which are exactly $x_{1},x_{2}$ in~\cite[Equation~(3.15)]{Mar08}.
At $\mathbf{q}=\mathbf{q}_{\varepsilon}$ and
$z=\widetilde{\iota}_{\varepsilon}(v)$, with $s=v^2/2-1$,
we obtain
\[
\lambda=(1-p^{-2})^{-(p+1)}
\left[
\frac{(p+1)^2}{p^2}
-\frac{2p(p+1)}{p-1}
(\mathbf{q}_{\varepsilon}-p^{-2})s
\right]
+O(\varepsilon^3).
\]
Thus $s$ agrees in the limit with the rescaled open coordinate
in~\cite[Equations~(3.39)--(3.40)]{Mar08}.

For the disk, the function $y_{\mathrm{Mar}}(\lambda)$ in~\cite[Equation~(3.31)]{Mar08} satisfies
\[
y_{\mathrm{Mar}}(\lambda)\,\rd\lambda
=-2\left(\widetilde{y}_{0}(z)
-\widetilde{y}_{0}(-\mathbf{q}^{-1/2})\right)
\rd x_{f=0}(z),
\]
where $\widetilde{y}_{0}=\widetilde{y}_{f}|_{f=0}$.
Replacing $\widetilde{y}_{0}(-\mathbf{q}_{\varepsilon}^{-1/2})$ by $\widetilde{y}_{0}(-p)$ does not affect the limit.
We choose the sign of $g_{s}$ so that
\[
\lim_{\varepsilon\to 0} \frac{g_{s}}{\varepsilon^{5}}=\frac{p^{2}-1}{2p^{3}}\tau^{5/2}\fz^{-5/4}.
\]
Equation~\eqref{equ:disk_double_scaling_limit} therefore yields, in the $s$-coordinate,
\[
-\frac{2}{g_{s}}\iota_{\varepsilon}^{*}
\widetilde{\omega}_{0,1}^{0}
\longrightarrow
-\frac{4\sqrt{2}}{3}\fz^{5/4}(2s-1)\sqrt{1+s}\,\rd s,
\]
where $\sqrt{1+s}=v/\sqrt{2}$.
This agrees with~\cite[Equation~(3.41)]{Mar08}.

For the annulus, the relation $\lambda=-(1-\mathbf{q})^{-(p+1)}/\widehat{X}$ implies
\[
\frac{\rd\lambda_{1}\rd\lambda_{2}}{(\lambda_{1}-\lambda_{2})^{2}}
=
\frac{\rd\widehat{X}_{1}\rd\widehat{X}_{2}}
{(\widehat{X}_{1}-\widehat{X}_{2})^{2}}.
\]
Expressing \eqref{equ:annulus_double_scaling_limit}
in the variables $s=v_{1}^{2}/2-1$ and $t=v_{2}^{2}/2-1$ gives
\[
\frac{\rd u_{1}\rd u_{2}}{(u_{1}-u_{2})^{2}}-
\frac{\rd x_{0}^{\mathrm{ds}}(u_{1})\,\rd x_{0}^{\mathrm{ds}}(u_{2})}
{\bigl(x_{0}^{\mathrm{ds}}(u_{1})-x_{0}^{\mathrm{ds}}(u_{2})\bigr)^{2}}
=
\frac{\rd s\,\rd t}{2(s-t)^{2}}
\left(\frac{2+s+t}{2\sqrt{1+s}\sqrt{1+t}}-1\right).
\]
The scalar coefficient is one half of the expression in~\cite[Equation~(3.43)]{Mar08}, where the overall constant is left unspecified.

\newpage
 
 \begingroup
 \section*{List of notations}
 \small
 \renewcommand{\arraystretch}{1.08}
 \setlength{\LTleft}{0pt}
 \setlength{\LTright}{0pt}
 \begin{longtable}{@{}p{0.20\textwidth}@{\hspace{1em}}p{0.76\textwidth}@{}}
 	\textbf{Notation} & \textbf{Meaning} \\[0.4em]
 	\endfirsthead
 	\textbf{Notation} & \textbf{Meaning} \\[0.4em]
 	\endhead
 	$p$ 
 	& Integer parameter $p$ defining the family $X_p$. \\[0.3em]
 	$f$, $f_{\star}$
 	& Framing parameter and special value $f_\star=2/(p-1)$. \\[0.3em]
 	$\bbT'$; $\sfu$, $\sfv$
 	& Calabi--Yau subtorus and equivariant parameters. \\[0.3em]
 	$\mathbf{Q}$, $\mathbf{q}$
 	& A- and B-model closed string coordinates. \\[0.3em]
 	$X_i$, $\widehat{X}_i$
 	& A- and B-model open string coordinates. \\[0.3em]
 	$D$
 	& Logarithmic derivative $D=-\partial_t=\mathbf{Q}\partial_{\mathbf{Q}}$.\\[0.3em]
 	$\hbar$; $\mathbf{p}$
 	& Formal expansion parameter; power-sum variables $\mathbf{p}=(p_1,p_2,\ldots)$. \\[0.3em]
 	$\boldsymbol{\mu}$, $|\boldsymbol{\mu}|$
 	& Winding vector and total winding $|\boldsymbol{\mu}|=\sum_i\mu_i$. \\[0.3em]
 	$N_{g,d}$
 	& Genus-$g$, degree-$d$ closed GW invariants of $X_p$. \\[0.3em]
 	$N^{\mathrm{rel}}_{g,d,\mu}$, $N_{g,d,\mu}^{f}$
 	& Formal relative and open GW invariants with framing $f$. \\[0.3em]
 	$N_{0,0,(w)}^{f}$
 	& Degree-zero disk invariant $N_{0,0,(w)}^{f}=-\frac{1}{w^2}\binom{(f+1)w-1}{w-1}$, for $w\geq1$.\\[0.3em]
 	$\mathcal{F}_g$, $\check{\mathcal{F}}_g$
 	& A- and B-model closed potentials. \\[0.3em]
 	$\mathcal{F}_{g,n}^{f}$, $\check{\mathcal{F}}_{g,n}^{f}$
 	& A- and B-model open potentials. \\[0.3em]
 	$\mathcal{F}_{g,n}^{\bbT'}$
 	& $\bbT'$-equivariant twisted descendant series with disk factors. \\[0.3em]
 	$\mathcal{F}_{g,\boldsymbol{\mu}}^{f}$
 	& Coefficient of $X_1^{\mu_1}\cdots X_n^{\mu_n}$ in $\mathcal{F}^f_{g,n}$, after the closed mirror map. \\[0.3em]
 	$F^{f}$, $Z^{f}$
 	& Open-closed generating function and open-closed partition function. \\[0.3em]
 	$H$, $V_f$
 	& Equivariant lift of the hyperplane class; $V_f=\bbC(f)[H]/(H^2-f^2/4)$.\\[0.3em]
 	$C_f$
 	& Classical Yukawa coupling $C_f=\langle H^2,H\rangle_f$.\\[0.3em]
 	$L_{\alpha}^{f}$
 	& Eigenvalues of quantum multiplication by $H$.\\[0.3em]
 	$e_{\alpha}^{f}$, $\bar e_{\alpha}^{f}$
 	& Canonical idempotents and normalized canonical basis.\\[0.3em]
 	$\Delta_{\alpha}^{f}$
 	& $\Delta_{\alpha}^{f}=\langle e_{\alpha}^{f},e_{\alpha}^{f}\rangle_f^{-1}$.\\[0.3em]
 	$\Omega^{f}$, $\check{\Omega}^{f}$
 	& A- and B-model cohomological field theories. \\[0.3em]
 	$R^{f}(z)$, $\check{R}^{f}(z)$
 	& A- and B-model $R$-matrices. \\[0.3em]
 	$S^{f}(z)$, $\Phi_0^{f}$
 	& Specializations of the equivariant $S$-operator and the class $\Phi_0$.\\[0.3em]
 	$\mathcal{C}_f$; $x_f$, $y$
 	& Framed mirror curve $\mathcal{C}_f=(\bbP^1,x_f,y)$, with $x_f$ and $y$ defined in~\eqref{equ:SpecCurv}.\\[0.3em]
 	$\gamma_f$, $N_f(z)$
 	& $\gamma_f=f(p-1)-1$; $N_f(z)$ is the polynomial numerator of $x_f'(z)$ in~\eqref{equ:xf_prime_Nf}.\\[0.3em]
 	$a_{\alpha}$, $\alpha=0,1$
 	& Critical points of $x_f$ for generic $f$.\\[0.3em]
 	$K_f$
 	& $K_f=f/2+(p+1)\mathbf{q}/(1-p^2\mathbf{q})$, satisfying $L_{\alpha}^{f}=K_f-Dx_f(a_{\alpha})$.\\[0.3em]
 	$\varphi_i(z)$, $i=0,1$
 	& Rational functions defined by the partial fraction expansions in~\eqref{equ:varphi_i_partial_fraction}.\\[0.3em]
 	$\omega_{g,n}^{f}$
 	& Eynard--Orantin differentials of the mirror curve $\mathcal{C}_f$. \\[0.3em]
 	$\rd\zeta_k^{\bar\alpha}$, $\zeta_k^{\bar\alpha}$
 	& Meromorphic differentials of the second kind and their primitives.\\[0.3em]
 	$\rho(\widehat X)$
 	& Local inverse of $\widehat X(z)=\ee^{x_f(z)}$ at $z=0$.\\[0.3em]
 	$C_{w,j}^{f}$
 	& Coefficients in the expansion~\eqref{equ:B_disk_lagrange_coefficients} of the B-model disk potential.\\[0.3em]
 	$P_w^{f}(\mathbf{q})$
 	& $P_w^f(\mathbf q)=\sum_{j=0}^w(C_{w,j}^f/C_{w,0}^f)\mathbf q^j$.\\[0.3em]
 	$V_w^{f}$, $W_w^{\rB}$
 	& A- and B-model insertion vectors of winding $w$.\\[0.3em]
 	$\mathcal{U}_{\rA}^{f}$, $\mathcal{U}_{\rB}^{f}$
 	& A- and B-model insertion series.\\[0.3em]
 	$\mathcal{L}_{\rA}^{f}$, $\mathcal{L}_{\rB}^{f}$
 	& A- and B-model open leg contributions.\\[0.3em]
 	$\Delta$, $L$
 	& $\Delta=(1-p^{2}\mathbf{q})/(1-\mathbf{q})$ and $L=\Delta^{-1/2}=1+O(\mathbf{q})$. \\[0.3em]
 	$\calP_g$
 	& Polynomial numerator of the closed potential.\\[0.3em]
 	$\mathcal{P}_{g,\boldsymbol{\mu}}$
 	& Polynomial numerator of the normalized fixed-winding open potential.\\[0.3em]
 	$\mathbf{q}_{\varepsilon}$, $\iota_{\varepsilon}$
 	& $\mathbf{q}_{\varepsilon}=(1-\varepsilon^2\tau)/(p^2-\varepsilon^2\tau)$ and $\iota_{\varepsilon}(u)=-p+\varepsilon u$.\\[0.3em]
 	$F_g^{f,\tau,\mathrm{ds}}$
 	& Genus-$g$ free energy of the double-scaled curve with parameter $\tau$. \\
 \end{longtable}
 \endgroup

\vspace{2cm}

\newcommand{\etalchar}[1]{$^{#1}$}

\end{document}